\documentclass[12pt,a4paper,reqno]{amsart}

\usepackage[margin=2cm]{geometry}
\usepackage[T1]{fontenc}
\usepackage[utf8]{inputenc}

\usepackage{amssymb,amsmath,amsthm,amsfonts,mathrsfs}
\usepackage[foot]{amsaddr}
\usepackage{xcolor}
\usepackage{float}
\usepackage{tikz}
\usetikzlibrary{arrows.meta,decorations.pathreplacing}
\usepackage{pgfplots}
\pgfplotsset{compat=1.16}

\usepackage[
colorlinks=true,
urlcolor=blue,
citecolor=red,
linkcolor=blue,
linktocpage,
pdfpagelabels,
bookmarksnumbered,
bookmarksopen
]{hyperref}

\newtheorem{theor}{Theorem}[section]

\newtheorem{lem}{Lemma}[section]
\newtheorem{prop}{Proposition}[section]
\newtheorem{cor}{Corollary}[section]

\newtheorem{rem}{Remark}[section]
\newtheorem{exa}{Example}[section]

\title[A unified treatment of degenerate nonlocal elliptic problems]{A unified treatment of degenerate nonlocal elliptic problems}

\author{L. Gasi\'nski}
\address[L. Gasi\'nski]{Department of Mathematics, University of the National Education Commission, ul. Podchor\k{a}\.{z}ych 2, 30-084 Krak\'ow, Poland}
\email{leszek.gasinski@uken.krakow.pl}

\author{H. Ramos Quoirin}
\address[H. Ramos Quoirin]{CIEM-Conicet, Universidad Nacional de C\'ordoba, (5000) C\'ordoba, Argentina}
\email{humbertorq@gmail.com}

\author{J.R. Santos J\'unior}
\address[J.R. Santos J\'unior]{Instituto de Ci\^encias Exatas e Naturais, Universidade Federal do Par\'a, Avenida Augusto Corr\^ea 01, 66075-110, Bel\'em, PA, Brazil}
\email{joaojunior@ufpa.br}

\author{K. Silva}
\address[K. Silva]{Instituto de Matem\'atica e Estat\'istica, Universidade Federal de Goi\'as, Goi\^ania, GO 74001-970, Brazil}
\email{kayesilva@ufg.br}

\thanks{J.R. Santos J\'unior has been supported by CNPq/Brazil-Grant 304340/2025-1 and K. Silva has been supported by CNPq/Brazil-Grant 300849/2025-7}

\makeatletter
\newcommand{\mylabel}[2]{#2\def\@currentlabel{#2}\label{#1}}
\makeatother

\begin{document}

    \begin{abstract}
We develop a unified framework for a broad class of nonlocal elliptic problems, encompassing a wide spectrum of nonlocal terms, including the classical Kirchhoff and Carrier-type equations as particular cases, and nonlinearities having sublinear or asymptotically linear growth. By combining the study of a suitable auxiliary problem and fixed-point techniques with careful parameter analysis, we establish existence, non-existence, and multiplicity results for positive solutions. Our method reveals sharp parameter thresholds and provides a comprehensive description of the solution set. Finally, for powerlike nonlinearities (including superlinear and singular ones) we provide a more direct approach, based on homogeneity. 
        \end{abstract}

\maketitle

\begin{center}
    \begin{minipage}{12cm}
        \tableofcontents
    \end{minipage}
\end{center}

 \bigskip

\emph{Mathematics Subject Classification (2020)}: 35J60, 35J20, 35B09, 35B32.

\smallskip

\emph{Key words and phrases}: Nonlocal elliptic problems, degenerate Kirchhoff equation, Carrier equation, positive solutions, fixed-point techniques, parameter threshold.

\section{Introduction}

The mathematical modeling of transverse vibrations in elastic strings and membranes has a rich history, originating with the classical D'Alembert wave equation. However, a more realistic description of these phenomena was independently introduced by Kirchhoff \cite{Kirchhoff1883} in 1883 and later by Carrier \cite{Carrier1945} in 1945. Kirchhoff's model extends the classical theory by accounting for the changes in tension caused by the elongation of the string during vibration, a refinement that leads to the now-famous nonlocal term $a(\int_\Omega |\nabla u|^2 dx)\Delta u$. While groundbreaking, Kirchhoff's derivation is based on an approximation valid for small vibrations, resulting in a nonlocality that depends on the gradient's $L^2$-norm. In contrast, the model proposed by Carrier \cite{Carrier1945} involves a nonlocal coefficient that depends on the $L^2$-norm of the solution itself, $a(\int_\Omega u^2 dx)$, capturing a different, yet equally significant, physical mechanism. From a physical perspective, both models represent substantial improvements over the D'Alembert formulation, but mathematically, they present distinct challenges. A principal difference lies in the fact that Kirchhoff-type problems possess a natural variational structure, allowing solutions to be characterized as critical points of an associated energy functional. This variational framework has been extensively exploited over the past decades, leading to a vast literature on existence, multiplicity, and regularity under various assumptions on the nonlinearity and the function $a$ \cite{Alves2005, Anello2011, Li2012, Ma2003, Perera2006, Zhang2006}. Carrier-type problems, by contrast, lack this inherent variational principle and typically require the development of alternative, non-variational techniques, such as fixed-point arguments, bifurcation theory, or sub-supersolution methods \cite{Alves2015, Chipot1997, Chipot1992, Correa2011, Doo2016, Jin2020}. The study of both classes has been further diversified by considering a wide array of nonlinearities, boundary conditions, and degenerate coefficients. This rich interplay between physical motivation and mathematical challenge has also inspired the investigation of analogous nonlocal models for elastic beam equations, where biharmonic operators replace the Laplacian \cite{Arosio1998, Arosio1999, Arosio2001}.

Although the vast majority of models for Kirchhoff and Carrier-type equations considered nonlocal terms that were bounded below by a positive constant, thus ensuring the problem remained non-degenerate, a fundamental shift was initiated by Ambrosetti and Arcoya \cite{Ambrosetti2016, Ambrosetti2017}. For Kirchhoff-type problems, they began to investigate the mathematical consequences on the structure of the solution set when the Kirchhoff function vanishes at zero or at infinity. From a mathematical standpoint, these questions were particularly intriguing as they introduced significant technical challenges; the possibility of a vanishing coefficient rendered the problem degenerate, rendering many of the standard tools and techniques commonly employed for this class of problems inapplicable. Consequently, this class of degenerate problems demanded the development of distinct analytical mechanisms.

The first articles to uncover a precise relationship between the number of zeros of the nonlocal term, its points of degeneracy, and the number of solutions were \cite{Arcoya2021, Santos2018} for Kirchhoff problems (see also \cite{FS} for improvements and new results and also the forthcoming paper \cite{AFS} with even more generalizations) and \cite{Gasinski2019, Gasinski2020} for Carrier-type equations. While both works revealed this novel phenomenon, they did so through fundamentally distinct technical approaches. The former exploited the variational structure of the Kirchhoff problem, employing truncation techniques and variational methods, whereas the latter developed a non-variational approach centered on the study of a suitable auxiliary problem and fixed-point techniques, which was essential for tackling the Carrier-type nonlocality. Following these pioneering works, a growing number of papers have addressed degenerate Kirchhoff or Carrier problems using a variety of different approaches. Among these, we highlight the contributions of \cite{Candito2022, Candito2024}, who extended the analysis to the $p$-Laplacian and $p(x)$-Laplacian framework.

In the present paper, we preserve the core spirit of the approach developed in \cite{Gasinski2019, Gasinski2020} for degenerate Carrier problems. This strategy consists of first considering a suitable auxiliary problem, then, based on the fact that this auxiliary problem possesses a unique solution for each fixed parameter, defining an appropriate map, proving its continuity, and finally exploring the geometry of the functions involved to establish the existence of fixed points, which in turn yields solutions to the original problem. However, while maintaining this practical essence, we introduce several refinements and new perspectives that allow us to extend the technique to a much more general context. These improvements yield some advantages: 1) our approach is applicable not only to Carrier and Kirchhoff-type problems but to a far broader class of nonlocal problems; 2) the strategy developed herein also adapts to nonlinearities that could not be handled by the original method of \cite{Gasinski2019, Gasinski2020}, as we remove the assumption that the nonlinearity vanishes at some positive point. This allows us to study much more general nonlinearities, including those exhibiting sublinear and asymptotically linear growth. We also show that for superlinear power-type nonlinearities (where the auxiliary problem lacks uniqueness) we can still obtain existence, non-existence, and multiplicity results. While some attempts in this direction have already appeared in the literature using different auxiliary problems combined with global bifurcation theory \cite{Cintra2020}, their approach is specifically tailored to Kirchhoff problems, whereas our framework unifies both Kirchhoff and Carrier-type equations under a single, comprehensive treatment.

Previous attempts to develop a general framework for a broad class of problems involving degenerate nonlocal terms of Kirchhoff and Carrier type can be found in the work \cite{Gasinski2022}. In that paper, the authors follow a different strategy by considering an equivalent formulation in which the nonlocal term appears on the right-hand side of the equation, in the denominator. Such a reformulation effectively leads to a problem whose nonlinearity displays a singular behavior when viewed through its dependence on the nonlocal term. Furthermore, the analysis in that work relies on additional restrictive assumptions on the nonlinearity, notably the existence of a positive point where the function vanishes, which allows the use of truncation arguments. As a consequence, their approach does not cover several important classes of nonlinearities treated in the present paper, including sublinear and asymptotically linear nonlinearities.

\subsection{General Nonlocal Equation and Background}
Consider the following equation
\begin{equation}
\tag{$\mathcal{P}_g$}\label{Pg}
\begin{cases}
- a\!\left(g(u) \right)\Delta u = \lambda f(u) & \text{in } \Omega, \\[6pt]
u > 0 & \text{in } \Omega, \\[6pt]
u = 0 & \text{on } \partial\Omega,
\end{cases}
\end{equation}
where $\Omega\subset\mathbb{R}^N$ is a bounded smooth domain,  $\lambda>0$, $a,f:[0,\infty) \rightarrow [0,\infty)$ are continuous, and $g:C^1_0(\overline{\Omega})\to [0,\infty)$ is a nonlocal continuous functional. The main examples of $g$ we are interested in are given by
\begin{equation*}
g(u)=\int_\Omega |u|^\gamma dx \ \ \mbox{and}\ \ \ g(u)=\int_\Omega |\nabla u|^\gamma dx,  \ \gamma\in [1,\infty).
\end{equation*}
However, as will be clear later on, our assumptions on $g$ cover a more general class of nonlocal terms.
We shall assume that $a$ has $k$ zeros:

\begin{enumerate}
\item[(\mylabel{a1}{$a_1$})] There exist $0:=t_0<t_1<t_2<\ldots<t_k$ such that $a(t_i)=0$, $i=1,2,\ldots,k$
and $a(t)>0$ for $t\in(t_i,t_{i+1})$, $i=0,\ldots,k-1$.
\end{enumerate}
As for $f$, we assume it to be sublinear or asymptotically linear. In addition, some monotonicity and positivity are also required:
\begin{enumerate}
\item[(\mylabel{f1}{$f_1$})] $f(0)=0$,
$f(t)>0$  \text{for all } $t>0$, and $\psi(t):=\frac{f(t)}{t}$  \text{is strictly decreasing in }$(0,\infty)$. In particular
$0 \leq \theta<\beta \leq \infty$,
where
\begin{equation*}
\theta:=\lim_{t\to\infty}\frac{f(t)}{t}\quad \mbox{and} \quad \beta:=\lim_{t\to 0^+}\frac{f(t)}{t}.
\end{equation*}
\item[(\mylabel{f2}{$f_2$})] If $\theta=0$ then $\displaystyle \lim_{t\to\infty}f(t)/t^{p-1}=c_0$, for some $c_0>0$ and  $1<p<2$.
\end{enumerate}

From \eqref{a1} and \eqref{f1} it is clear that there are no solutions satisfying $g(u)=t_i$, so we shall be concerned with solutions of \eqref{Pg} satisfying $g(u)\neq t_i$. 
Given $0\le t'<t''$ we shall say that $u$ is a solution of \eqref{Pg} in the interval $(t',t'')$ if $u$ is a solution to \eqref{Pg} and $t'<g(u)<t''$.\\

We shall approach \eqref{Pg} by looking for solutions of the problem
\begin{equation*}
\begin{cases}
-a(\alpha)\Delta u = \lambda f(u) & \text{in }\Omega,\\
u>0 & \text{in }\Omega,\\
u=0 & \text{on }\partial\Omega.
\end{cases}
\end{equation*}
that additionally satisfy $g(u)=\alpha$, for some $\alpha>0$ (note that weak solutions of  this problem belong to $C^{1, \alpha}(\overline{\Omega})$ by standard elliptic regularity, so that $g(u)$ is well-defined).
Thus the following auxiliary problem will play an important role in this work:
\begin{equation*}
\begin{cases}
-\Delta w = s\, f(w) & \text{in }\Omega,\\
w>0 & \text{in }\Omega,\\
w=0 & \text{on }\partial\Omega.
\end{cases}
\end{equation*}

Let $s\in(\lambda_1/\beta,\lambda_1/\theta)$, where $\lambda_1$ is the first eigenvalue of the Dirichlet Laplacian, and we agree that $\lambda_1/0=\infty$ and $\lambda_1/\infty=0$.
By \cite{Brezis1986} this problem has a unique solution $w_s$.
We set $$Q(s):=g(w_s), \quad \mbox{for} \quad s \in (\lambda_1/\beta,\lambda_1/\theta),$$
 and we assume that $g(0)=0$,  and one of the following conditions hold:

\begin{enumerate}
\item[(\mylabel{g1}{$g_1$})]  $g(u_n)\to\infty$ as $|u_n(x)|\to\infty$ in a positive measure subset of $\Omega$.

\item[(\mylabel{g2}{$g_2$})]  $g(u_n)\to\infty$  as $|\nabla u_n(x)|\to\infty$ in a positive measure subset of $\Omega$.


\end{enumerate}



\begin{rem}\strut
\begin{enumerate}
\item Note that \eqref{g1} is clearly satisfied by $g(u)=\int_\Omega |u|^\gamma dx$, whereas 
\eqref{g2} is satisfied by $g(u)=\int_\Omega |\nabla u|^\gamma dx$, for any   $\gamma\in [1,\infty)$. Further examples satisfying \eqref{g1} or \eqref{g2} will be given in Example \ref{exam} below.
\item Under \eqref{g1} or \eqref{g2} we shall see that the map $Q$ is continuous, and satisfies $Q(s) \to 0$ as $s \to \lambda_1/\beta$, and $Q(s) \to \infty$ as $s \to \lambda_1/\theta$, cf. Lemma \ref{l1} below. The latter condition is in fact enough for our purposes, and it is more general than assuming \eqref{g1} or \eqref{g2}. Indeed, if $\Omega'$ is a positive measure subset of $\Omega$ with $|\Omega \setminus \Omega'|>0$ and
$g(u)=\int_{\Omega'} |u|^\gamma$ then neither \eqref{g1} nor \eqref{g2} is satisfied. However, for such $g$ we still have  $Q(s) \to \infty$ as $s \to \lambda_1/\theta$, cf. Lemma \ref{l1}(iii) below.
\end{enumerate}	

\end{rem}

Given  $i=0,1,\ldots,k-1,$ we set
\begin{equation*}
    A_i:=\max\{a(\alpha): \alpha\in [t_i,t_{i+1}]\}.
\end{equation*}
From \eqref{a1} it is clear that $A_i>0$.

Our first and most general result reads as follows:

\begin{theor}\label{thm-1} Suppose \eqref{a1}, \eqref{f1} and either \eqref{g1} or \eqref{g2} and \eqref{f2}. Then  for any $i=0,1,\ldots,k-1$ there exists $0<\lambda_{0,i}<\widetilde\lambda_{0,i}<(A_i\lambda_1)/\theta$ such that the following assertions hold:
\begin{enumerate}
    \item[$(i)$] If $0<\lambda<\lambda_{0,i}$ then \eqref{Pg} has at least two solutions $u_{1,i},u_{2,i}$ in $(t_i,t_{i+1})$.
\item[$(ii)$] If $\lambda>\widetilde \lambda_{0,i}$ then \eqref{Pg} has no solution in $(t_i,t_{i+1})$.
\end{enumerate}
In particular:
\begin{enumerate}
\item[$(iii)$] If $\lambda<\displaystyle \min_{i=0,1\ldots,k-1}\lambda_{0,i}$, then  \eqref{Pg} has at least $2k$ solutions $u_{1,i},u_{2,i}$ in $(t_i,t_{i+1})$, $i=0,1,\ldots,k-1$.
   \item[$(iv)$]  if $\lambda>\displaystyle  \max_{i=0,1\ldots,k-1}\widetilde \lambda_{0,i}$, then for any $i=0,1,\ldots,k-1$ there is no solution of \eqref{Pg} in $(t_i,t_{i+1})$.
\end{enumerate}
If, in addition,  $Q$ is increasing, then $\lambda_{0,i}=\widetilde\lambda_{0,i}$ and for $\lambda=\lambda_{0,i}$ there exists at least one solution $u_i$ of \eqref{Pg}  in $(t_i,t_{i+1})$.
\end{theor}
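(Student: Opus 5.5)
The plan is to reduce \eqref{Pg} to a scalar equation in the parameter $s$ of the auxiliary problem. First I check the correspondence. Suppose $u$ solves \eqref{Pg} with $\alpha:=g(u)\in(t_i,t_{i+1})$. Then $a(\alpha)>0$ and $u$ solves $-\Delta u=s f(u)$ with $s=\lambda/a(\alpha)$. By \eqref{f1} and the Brezis--Oswald theory, a positive solution of the auxiliary problem exists only for $s\in(\lambda_1/\beta,\lambda_1/\theta)$, and it is then unique, so $u=w_s$ and $Q(s)=g(w_s)=\alpha$. Conversely, if $s\in(\lambda_1/\beta,\lambda_1/\theta)$ satisfies $Q(s)\in(t_i,t_{i+1})$ and
\[
\Lambda(s):=s\,a(Q(s))=\lambda ,
\]
then $u=w_s$ satisfies $-a(g(u))\Delta u=a(Q(s))\,s f(u)=\lambda f(u)$. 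Hence solutions of \eqref{Pg} in $(t_i,t_{i+1})$ are in bijection with the points of the open set $S_i:=\{s: Q(s)\in(t_i,t_{i+1})\}$ where $\Lambda(s)=\lambda$. Distinct $s$ give distinct $u$, because distinct $s$ give distinct values of $\Lambda$ or of $Q$; more directly, $w_s=w_{s'}$ forces $s=s'$ since $f(w_s)\not\equiv0$.

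Next I use Lemma \ref{l1}: $Q$ is continuous, $Q(s)\to0$ as $s\to\lambda_1/\beta$, and $Q(s)\to\infty$ as $s\to\lambda_1/\theta$. By the intermediate value theorem, $S_i$ has a connected component $(\sigma_i,\tau_i)$ with the following properties:
\begin{itemize}
\item $Q(\tau_i)=t_{i+1}$;
\item either $Q(\sigma_i)=t_i$, or $i=0$ and $\sigma_i=\lambda_1/\beta$;
\item $Q$ takes values in $(t_i,t_{i+1})$ on $(\sigma_i,\tau_i)$.
\end{itemize}
One takes $\tau_i$ to be the first point after some $s_0\in S_i$ where $Q$ reaches $t_{i+1}$, and $\sigma_i$ the last point before $s_0$ where $Q$ equals $t_i$. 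The function $\Lambda$ is continuous and positive on $(\sigma_i,\tau_i)$. It tends to $0$ at $\tau_i$ since $a(t_{i+1})=0$, and likewise at $\sigma_i$ when $i\ge1$. When $i=0$, $\Lambda(s)\to(\lambda_1/\beta)\,a(0)$ as $s\to\lambda_1/\beta$, and this limit is $0$ when $\beta=\infty$.

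The degenerate case $i=0$ with $\beta<\infty$ and $a(0)>0$ is the delicate point. There $\Lambda$ may fail to vanish at the left end, and I would have to check whether the paper implicitly has $a(0)=0$ (i.e.\ $t_0$ counted as a zero), or else obtain only one crossing. Assuming $\Lambda\to0$ at both ends, $\Lambda$ attains a positive maximum $M_i$ on $(\sigma_i,\tau_i)$. I then choose $\lambda_{0,i}\in(0,M_i)$, or $\lambda_{0,i}=M_i$ in the monotone case discussed below. For $0<\lambda<\lambda_{0,i}$, the intermediate value theorem applied on each side of a maximizer gives two points $s_1<s_2$ with $\Lambda(s_j)=\lambda$, which yields $u_{1,i},u_{2,i}$ and proves $(i)$.

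For $(ii)$ I set $\widetilde\lambda_{0,i}:=\sup_{S_i}\Lambda$. On $S_i$ we have $a(Q(s))\le A_i$, so $\Lambda(s)\le A_i s$. Moreover, $S_i\subset\{Q\le t_{i+1}\}$, and since $Q\to\infty$ at $\lambda_1/\theta$, this set lies in $(\lambda_1/\beta,\bar s_i]$ for some $\bar s_i<\lambda_1/\theta$. Hence
\[
\widetilde\lambda_{0,i}\le A_i\bar s_i<A_i\lambda_1/\theta ,
\]
with the quantity being finite even when $\theta=0$. Since $\widetilde\lambda_{0,i}\ge M_i>\lambda_{0,i}$, the strict ordering in the statement holds. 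For $\lambda>\widetilde\lambda_{0,i}$ the equation $\Lambda(s)=\lambda$ has no root in $S_i$, which proves $(ii)$.

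Items $(iii)$ and $(iv)$ follow by taking the minimum and the maximum over $i$. The $2k$ solutions in $(iii)$ are distinct because their values $g(u)$ lie in pairwise disjoint intervals.

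If $Q$ is increasing, then $S_i=(\sigma_i,\tau_i)$ is a single interval. In that case I take $\lambda_{0,i}=\widetilde\lambda_{0,i}=M_i$, which is a maximum attained at some $s^*$, and $w_{s^*}$ is the solution at $\lambda=\lambda_{0,i}$. The substantive analytic input is Lemma \ref{l1} (continuity of $Q$ and its endpoint behaviour, the latter using \eqref{g1}/\eqref{g2} and \eqref{f2}), which I take as given. Apart from that, the main obstacle is the endpoint behaviour of $\Lambda$ at $\lambda_1/\beta$ in the case $i=0$.
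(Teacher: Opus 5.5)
Your proof is correct, with the $i=0$ caveat you raise (discussed below), and it takes a genuinely different route from the paper.

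\textbf{Comparison with the paper.} The paper works in the variable $\alpha=g(u)$. For each fixed $\lambda$ it looks for fixed points of $P_\lambda(\alpha)=Q(\lambda/a(\alpha))$ on the $\lambda$-dependent set $D_\lambda$. It then classifies the maximal intervals of $D_\lambda$ into the four types and pairs them (Lemma~\ref{l3}). Finally it defines $\lambda_{0,i},\widetilde\lambda_{0,i}$ as the smallest and largest zeros of $c(\lambda)=\min_\alpha\,(P_\lambda(\alpha)-\alpha)$, which needs the continuity and endpoint analysis of Lemma~\ref{clbd}.

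You parametrize by $s$ instead. Then the solutions for \emph{all} $\lambda$ at once are the level sets of the single $\lambda$-independent function $\Lambda(s)=s\,a(Q(s))$ on the fixed open set $S_i=Q^{-1}((t_i,t_{i+1}))$. After that, everything reduces to the intermediate value theorem. The bound $\widetilde\lambda_{0,i}\le A_i\bar s_i<A_i\lambda_1/\theta$ comes for free, including the case $\theta=0$, which the proof of Lemma~\ref{clbd} does not treat. When $Q$ is increasing, your $\Lambda$ is exactly $a(\alpha)Q^{-1}(\alpha)$, i.e.\ the equivalence used in the proof of Proposition~\ref{prop1}.

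\textbf{Your route gives more than you claim.} The function $\Lambda$ is continuous on $\overline{S_i}$, which is compact because $Q\to 0$ and $Q\to\infty$ at the two ends, and $\Lambda$ vanishes on $\partial S_i$. Hence $\sup_{S_i}\Lambda$ is attained in some component. Running your argument in that component gives two solutions for every $\lambda<\widetilde\lambda_{0,i}$ and one solution at $\lambda=\widetilde\lambda_{0,i}$. So $\lambda_{0,i}=\widetilde\lambda_{0,i}$ holds without assuming $Q$ increasing. Your artificial choice $\lambda_{0,i}<M_i$ only serves to match the strict inequality in the statement, and that inequality is inconsistent with the equality asserted in the monotone case anyway.

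\textbf{Two smaller points.}

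First, your construction of $(\sigma_i,\tau_i)$ need not yield an interval on which $Q\in(t_i,t_{i+1})$. You take the first point after $s_0$ where $Q=t_{i+1}$ and the last point before $s_0$ where $Q=t_i$, but $Q$ may leave through the other level in between. Simply take the connected component of $S_i$ containing $s_0$. Its endpoints satisfy $Q\in\{t_i,t_{i+1}\}$, or, for $i=0$, the left one may equal $\lambda_1/\beta$. That is all you actually use.

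Second, your worry about $i=0$ is justified. Condition \eqref{a1} does not impose $a(0)=0$, but Section~2 of the paper fixes $t'$ with $a(t')=a(t'')=0$, so the paper's proof covers $i=0$ only when $a(0)=0$. Without this, and with $\beta<\infty$, assertion (i) can fail. Take $Q$ increasing and set $a(\alpha)=h(Q^{-1}(\alpha))/Q^{-1}(\alpha)$ on $(0,t_1]$, with $h$ strictly decreasing, $h>0$ before $Q^{-1}(t_1)$ and $h(Q^{-1}(t_1))=0$. This extends continuously to $a(0)=\beta h(\lambda_1/\beta)/\lambda_1>0$. Then $\Lambda=h$, so for each small $\lambda$ there is exactly one solution in $(0,t_1)$.
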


\begin{rem}
Note that $Q$ depends only on $f$ and $g$, i.e. once $f$ and $g$ are fixed, $Q$ is fixed too. In some cases it has a computable expression (see \eqref{deel}) but, in general, we can only bound it from below and above (see Lemma \ref{lbounds} and Corollary \ref{c1}). We shall see that $Q$ is continuous under \eqref{f1} and \eqref{g1}. Thus, if in addition $Q$ is increasing then it is a homeomorphism, and we have
\begin{equation}\label{defl0}
\lambda_{0,i}=\displaystyle \max_{\alpha\in [t_i,t_{i+1}]}a(\alpha)Q^{-1}(\alpha)
\end{equation}
for $i=0,1,\ldots,k-1$.
\end{rem}

We are mainly interested in two particular choices of $g$, namely:
\begin{equation*}
g(u)=\int_\Omega |u|^{\gamma_1} dx \ \ \mbox{and}\ \ \ g(u)=\int_\Omega |\nabla u|^{\gamma_2} dx,  \quad \gamma_1,\gamma_2 \in [1,\infty).
\end{equation*}
As already observed, such $g$ satisfy \eqref{g1} or \eqref{g2}. Furthermore,
we shall see that the corresponding $Q$ is increasing (assuming $\gamma_2=2$), cf. Lemma \ref{hlemma}.
Theorem \ref{thm-1} yields then following result:
\begin{theor}\label{thm1} Suppose \eqref{a1}, \eqref{f1} and either $g(u)=\int_\Omega |u|^{\gamma} dx$, $\gamma \in [1,\infty)$ or $g(u)=\int_\Omega |\nabla u|^{2} dx$. Then $Q$ is increasing and the conclusions of Theorem \ref{thm-1} hold with $\lambda_{0,i}$  given by \eqref{defl0}.
\end{theor}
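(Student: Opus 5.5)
Theorem \ref{thm1} reduces to Theorem \ref{thm-1}. It suffices to check three things: both choices of $g$ satisfy \eqref{g1} or \eqref{g2}, the hypothesis \eqref{f2} is harmless, and the associated map $Q$ is increasing. Then the last part of Theorem \ref{thm-1}, together with \eqref{defl0}, gives the conclusion.

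The first point is immediate. By Fatou's lemma, $g(u)=\int_\Omega|u|^\gamma dx$ satisfies \eqref{g1}, and $g(u)=\int_\Omega|\nabla u|^2dx$ satisfies \eqref{g2}. In either case $Q$ is continuous and has the required limits at the ends of $(\lambda_1/\beta,\lambda_1/\theta)$, by Lemma \ref{l1}. If \eqref{f2} is genuinely needed in the case $\theta=0$, I would reread the proof of Lemma \ref{l1}. The hope is that its only role is to guarantee $Q(s)\to\infty$ as $s\to\infty$, and that for these two $g$ this follows from $\|w_s\|_\infty\to\infty$, which in turn comes from sublinearity via the strictly decreasing $f(t)/t$. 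I flag this as a point to check rather than assume.

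The core step is monotonicity of $s\mapsto w_s$. Let $s<s'$. Then
\[
-\Delta w_s=sf(w_s)\le s'f(w_s),
\]
so $w_s$ is a subsolution of the problem with parameter $s'$. A large multiple of $\varphi_1$, or the solution of a truncated problem, serves as a supersolution, and this is where sublinearity of $f$ enters. The sub/supersolution method then produces a solution $\ge w_s$, and by the uniqueness result of \cite{Brezis1986} that solution is $w_{s'}$. Hence $w_s\le w_{s'}$, and the strong maximum principle together with the strict inequality $s<s'$ gives $w_s<w_{s'}$ in $\Omega$. This immediately yields that $Q(s)=\int_\Omega w_s^\gamma$ is strictly increasing.

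For the gradient case, I would test the equation with $w_s$ to get
\[
Q(s)=\int_\Omega|\nabla w_s|^2=s\int_\Omega f(w_s)w_s .
\]
Both factors $s$ and $\int_\Omega f(w_s)w_s$ are increasing in $s$, since $f\ge0$ and $f$ is increasing. Monotonicity of $f$ is not automatic from \eqref{f1}, so I would avoid it. Instead I would use the Picone/Díaz--Saa type identity behind Brezis--Oswald uniqueness, or compare directly via $\int\nabla w_s\cdot\nabla w_{s'}$:
\[
\int|\nabla w_{s'}|^2-\int|\nabla w_s|^2 \ge \int \nabla (w_{s'}-w_s)\cdot\nabla(w_{s'}+w_s).
\]
Testing the difference of the two equations against $w_{s'}+w_s$ then gives positivity, using $f>0$ and $w_{s'}>w_s$.

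I expect this gradient case to be the main obstacle. A fallback is to write $w_{s'}=w_s+h$ with $h>0$ and compute
\[
\int|\nabla w_{s'}|^2-\int|\nabla w_s|^2=\int|\nabla h|^2+2\int\nabla w_s\cdot\nabla h=\int|\nabla h|^2+2s\int f(w_s)h>0.
\]
This shows strict monotonicity without any monotonicity assumption on $f$. With $Q$ increasing, Theorem \ref{thm-1} applies, $Q$ is a homeomorphism, and $\lambda_{0,i}$ is given by \eqref{defl0}.
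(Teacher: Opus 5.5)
Your overall route matches the paper's. Theorem~\ref{thm1} is deduced from Theorem~\ref{thm-1} via Lemma~\ref{hlemma}: monotonicity of $s\mapsto w_s$ comes from comparison plus Brezis--Oswald uniqueness, and the gradient case comes from an energy identity. Your fallback computation
\[
\int_\Omega|\nabla w_{s'}|^2-\int_\Omega|\nabla w_s|^2=\int_\Omega|\nabla h|^2+2s\int_\Omega f(w_s)h
\]
is correct. It is the paper's computation rearranged: the paper tests the \emph{sum} of the two equations against $w_{s'}-w_s$ and gets $\int_\Omega (s'f(w_{s'})+sf(w_s))(w_{s'}-w_s)>0$. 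Your first version does not work as written. Testing the \emph{difference} of the equations against $w_{s'}+w_s$ gives $\int_\Omega (s'f(w_{s'})-sf(w_s))(w_{s'}+w_s)$, whose sign is not controlled without monotonicity of $f$. Also, the displayed ``$\ge$'' there is actually an identity.

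Two other steps need repair.

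\textbf{The supersolution.} $M\varphi_1$ is never a supersolution for a parameter $s'\in(\lambda_1/\beta,\lambda_1/\theta)$. It would require $\psi(M\varphi_1)\le\lambda_1/s'$ throughout $\Omega$, but $\psi(M\varphi_1(x))\to\beta>\lambda_1/s'$ as $x\to\partial\Omega$. There are two fixes.
\begin{itemize}
\item Use the solution of $-\Delta W=s'(\theta+\varepsilon)W+K$, where $s'(\theta+\varepsilon)<\lambda_1$ and $K$ is large, together with the bound $f(t)\le(\theta+\varepsilon)t+M_\varepsilon$.
\item More simply, reverse the roles as the paper does. Then $w_{s'}$ is a supersolution for parameter $s$, $\varepsilon\varphi_1$ is a subsolution below it for small $\varepsilon$, and uniqueness identifies the solution in between as $w_s$.
\end{itemize}
Strict inequality $w_s<w_{s'}$ everywhere is also unnecessary; $w_s\le w_{s'}$ with $w_s\not\equiv w_{s'}$ suffices for both choices of $Q$.

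\textbf{The role of \eqref{f2}.} You are right that Theorem~\ref{thm1} does not assume \eqref{f2}, whereas the \eqref{g2} branch of Theorem~\ref{thm-1} does. The paper's one-line proof passes over this, and only $Q(s)\to\infty$ as $s\to(\lambda_1/\theta)^-$ is used downstream. However, $\|w_s\|_\infty\to\infty$ alone does not give $\int_\Omega w_s^\gamma\to\infty$. Use instead the pointwise bound $w_s\ge\eta_s\varphi_1$ from the proof of Lemma~\ref{l1}(iii), which gives $\int_\Omega w_s^\gamma\ge\eta_s^\gamma\int_\Omega\varphi_1^\gamma\to\infty$. For the gradient case add Poincar\'e, $\int_\Omega|\nabla w_s|^2\ge\lambda_1\int_\Omega w_s^2$. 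With this, \eqref{f2} is indeed unnecessary here.
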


Theorem \ref{thm-1} can be applied in many situations, as the following examples of $f$ and $g$ (with the corresponding values of $\beta$ and $\theta$) satisfy \eqref{f1} and \eqref{g1}, respectively:
\begin{exa} \strut
\begin{enumerate}
    \item $f(t)=t^{p-1}$, $p\in(1,2)$, $\beta=\infty$ and $\theta=0$.
 
    \item $f(t)=\frac{\beta_0 t}{1+t}$ with $\beta_0>0$,  $\beta=\beta_0$ and $\theta =0$.
       \item $f(t)=\theta_0 t+\sqrt{t}$ with $\theta_0>0$,  $\beta=\infty$ and $\theta =\theta_0$.
       \item $f(t)=t\left(\theta_0+\frac{\beta_0-\theta_0}{1+t}\right)$ or $f(t)=t\left[\theta_0+(\beta_0-\theta_0)\left(1-\frac{2}{\pi}\arctan t\right)\right]$ with  $0<\theta_0<\beta_0$,  $\beta=\beta_0$ and $\theta =\theta_0$.
       
\end{enumerate}
\end{exa}

\begin{exa}\label{exam}\strut

\begin{enumerate}
	
\item \textbf{Functionals depending on $L^p, L^\infty, C^1_0, W_0^{1,p}$-norms:} Let $p\geq 1$ and $\|u\|$ be any one of the following norms: $\|u\|_{L^p(\Omega)}, \|u\|_{L^\infty(\Omega)}, \|\nabla u\|_{L^p(\Omega)}$ or $\|\nabla u\|_{L^\infty(\Omega)}$.
A very natural class of $g$ is obtained by composing $\|.\|$ with a suitable continuous function. 
Let $\phi:[0,+\infty)\to[0,+\infty)$ be continuous, with $\phi(0)=0$ and $\phi(t)\to\infty$ as $t\to\infty$. Then $g(u)=\phi(\|u\|)$ satisfies the required properties.

\medskip
Typical examples include polynomial, logarithmic, exponential and super-exponential growth:

\begin{itemize}
\item[$(i)$] $g(u)=\|u\|^q, \ q>0$;
\item[$(ii)$] $g(u)=\log(1+\|u\|)$;
\item[$(iii)$] $g(u)=\log\bigl(1+\log(1+\|u\|)\bigr)$;
\item[$(iv)$] $g(u)=e^{\|u\|^q}-1, \ q>0$;
\item[$(v)$] $g(u)=\exp(\exp(\|u\|))-e$.\\
\end{itemize}

\item \textbf{General integral functionals:} Another large class is given by nonlinear integral functionals of the form

\[
g(u)=\int_\Omega \phi(|u(x)|)\,dx, \ \text{or} \ \ g(u)=\int_\Omega \phi(|\nabla u(x)|)\,dx,
\]
where $\phi$ is as previously. Examples include:

\begin{itemize}
\item[$(i)$] $g(u)=\int_\Omega \log(1+|\nabla u|)\,dx$;
\item[$(ii)$] $g(u)=\int_{\Omega}|u|^q\log(1+|u|)\,dx, \ q\geq 0$;
\item[$(iii)$] $g(u)=\int_{\Omega}(e^{|u|^q}-1) dx, \ q>0$;
\item[$(iv)$] $g(u)=\int_\Omega e^{|\nabla u|}\,dx-|\Omega|$.\\
\end{itemize}

\end{enumerate}

Let us check that all the previous examples satisfy \eqref{g1} or \eqref{g2}: let $(u_n) \subset C^1_0(\overline{\Omega})$.
If $|u_n(x)| \to +\infty$ for $x \in \Omega_1$, with  $|\Omega_1|>0$,
then it follows immediately from Fatou's Lemma that $\|u_n\|_{L^1(\Omega)} \to +\infty$. The continuous embeddings $L^p(\Omega), L^{\infty}(\Omega) \subset L^1(\Omega)$
yield that
$\|u_n\|_{L^p(\Omega)}\to+\infty$
for every  $p \ge1$, and
$\|u_n\|_{L^\infty(\Omega)}\to+\infty$,
whereas Sobolev and Holder inequalities provide us with $\|\nabla u_n\|_{L^p(\Omega)} \to+\infty$ and $\|\nabla u_n\|_{L^\infty(\Omega)} \to+\infty$.

If now $|\nabla u_n(x)|\to+\infty$
for $x \in \Omega_2$, with  $|\Omega_2|>0$,
then it is immediate to conclude that all the examples of general integral functionals involving $|\nabla u|$ satisfy \eqref{g2}.

\end{exa}

If $a$ has an oscillatory behavior then a high number of solutions may arise. To simplify we assume that
$a$ has only two zeros:

\begin{theor}\label{thm6} Assume the conditions of Theorem \ref{thm-1} with $k=1$,  and $Q$ is increasing. Let $H(\alpha):=a(\alpha)Q^{-1}(\alpha)$, $M_1$ be the set of local maximizers of $H$ and $M_2$ be the set of local minimizers of $H$. Assume that both sets are finite and $m:=\sup\{H(\alpha):\alpha\in M_2\}<M:=\inf\{H(\alpha):\alpha\in M_1\}$. If $j$ denotes de cardinality of $M_1$, then problem \eqref{Pg} has at least $2j$ solutions in $(t_0,t_1)$ for all $\lambda\in(m,M)$.
	
\end{theor}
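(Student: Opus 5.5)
The plan is to reduce \eqref{Pg} to a scalar equation in $\alpha$, exactly as in the proof of Theorem \ref{thm-1}. If $u$ solves \eqref{Pg} with $g(u)=\alpha\in(t_0,t_1)$, then $a(\alpha)>0$ and $u$ solves $-\Delta u=s f(u)$ with $s=\lambda/a(\alpha)$. By uniqueness of the auxiliary problem, $u=w_s$, and then $\alpha=g(w_s)=Q(s)$. Conversely, suppose $\alpha\in(t_0,t_1)$ satisfies $Q(\lambda/a(\alpha))=\alpha$. Since $Q$ is increasing, continuous and onto $(0,\infty)$ by Lemma \ref{l1}, this is equivalent to $\lambda/a(\alpha)=Q^{-1}(\alpha)$, i.e.\ $H(\alpha)=\lambda$. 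In that case $u=w_{\lambda/a(\alpha)}$ solves \eqref{Pg} with $g(u)=\alpha$. Different roots $\alpha$ give different solutions, since they have different values of $g$. So it suffices to show that $H(\alpha)=\lambda$ has at least $2j$ roots in $(t_0,t_1)$ for each $\lambda\in(m,M)$.

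$H$ is continuous on $[t_0,t_1]$, with $H>0$ inside and $H(t_1)=0$. At the left end, $Q^{-1}(\alpha)\to\lambda_1/\beta$ as $\alpha\to0^+$, so $H(\alpha)\to a(0)\lambda_1/\beta$, which is finite (possibly zero). I will list the finitely many local extrema in $(t_0,t_1)$ in increasing order. Consider two consecutive local maximizers $p<p'$ in $M_1$. The minimum of $H$ on $[p,p']$ cannot be attained at $p$ or $p'$ only trivially; if it were, $H$ would have to be constant near them, contradicting finiteness of the extremum sets (constant stretches consist of local extrema). Hence the minimum is attained at an interior point, which is a local minimizer, so it lies in $M_2$ with value at most $m$. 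By the intermediate value theorem, $H$ takes the value $\lambda\in(m,M)$ at least once on each side of that minimizer within $(p,p')$, since $H(p),H(p')\ge M>\lambda$.

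For the outer pieces, let $p_1$ be the smallest and $p_j$ the largest element of $M_1$. On $[p_j,t_1]$ we have $H(p_j)\ge M>\lambda$ and $H(t_1)=0<\lambda$ (recall $m\ge0$ if $M_2\neq\emptyset$; if $M_2=\emptyset$ then $m=-\infty$ and I restrict to $\lambda>0$). This gives one root in $(p_j,t_1)$. On $(t_0,p_1)$ the count is the delicate point. If $H(0^+)=a(0)\lambda_1/\beta<\lambda$, I get a root in $(0,p_1)$ directly. Otherwise I need to argue that the infimum of $H$ on $(0,p_1]$ is below $\lambda$, which would be attained at a local minimizer in $M_2$, so some value $\le m<\lambda$ occurs. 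I expect this endpoint analysis near $\alpha=0$ to be the main obstacle. The intended reading is probably that $a(0)=0$ or $\beta=\infty$, or that $0$ itself counts as a minimizer of the extension of $H$ to $[0,t_1]$; I would check which convention the paper uses.

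Assuming that endpoint issue is settled, the total count is as follows. There are $j-1$ gaps between consecutive maximizers, each giving two roots, plus one root on each outer side. This yields $2(j-1)+2=2j$ distinct roots $\alpha$, all lying in disjoint open intervals. Hence there are at least $2j$ distinct solutions $u=w_{\lambda/a(\alpha)}$ of \eqref{Pg} in $(t_0,t_1)$.
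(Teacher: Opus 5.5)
Your proposal follows the paper's proof: the paper combines Lemma \ref{equfixedpoints} with the equivalence \eqref{eq:key-ineq}, i.e.\ $P(\alpha)=\alpha\iff H(\alpha)=\lambda$, and then simply calls the existence of $2j$ roots ``clear'' (Proposition \ref{oscifixed}), so your intermediate-value count supplies exactly the detail the paper omits, including the plateau argument that rules out minima at the maximizers. The left-endpoint issue you leave open is settled by the paper's standing setup, which fixes $t'<t''$ with $a(t')=a(t'')=0$ (Section 3 and Lemma \ref{equfixedpoints}, applied with $t'=t_0$): this gives $H(\alpha)\to a(0)\,\lambda_1/\beta=0<\lambda$ as $\alpha\to0^+$, hence a root in $(t_0,p_1)$ exactly as in $(p_j,t_1)$; your caution is justified, since if $a(0)>0$ and $\beta<\infty$ the count can genuinely drop to $2j-1$ whichever convention is used for $H$ at $t_0$.
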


Note that since $Q^{-1}:(0,\infty)\to (\lambda_1/\beta,\lambda_1/\theta)$ is an increasing, continuous and positive function, we can always consider, in Theorem \ref{thm6}, the change of variables $a(\alpha)=b(\alpha)/Q^{-1}(\alpha)$.

\vskip.3cm
We can also prove existence of infinitely many solutions under the following condition:

\begin{enumerate}
	\item[(\mylabel{a2}{$a_2$})] There exist $0:=t_0<t_1<t_2<\ldots$ such that $a(t_i)=0$, $i\in\mathbb{N}$
	and $a(t)>0$ for $t\in(t_i,t_{i+1})$, $i\in\mathbb{N}\cup\{0\}$.
\end{enumerate}

\begin{cor}\label{thm2} Suppose \eqref{a2}, \eqref{f1} and either \eqref{g1} or \eqref{g2} and \eqref{f2}. In addition, assume that $Q$ is increasing.
	\begin{enumerate}
		\item[$(i)$] If $\bar\lambda_{0}:=\displaystyle \min_{i\in\mathbb{N}\cup\{0\}}\max_{\alpha\in [t_i,t_{i+1}]}(a(\alpha)Q^{-1}(\alpha))>0$ and $0<\lambda<\bar\lambda_{0}$ then for any $i\in\mathbb{N}\cup\{0\}$ the problem \eqref{Pg} has a pair of solutions $u_{1,i},u_{2,i}$ in $(t_i,t_{i+1})$. In particular \eqref{Pg} has two sequences of solutions.
		\item[$(ii)$] If $\tilde\lambda_{0}:=\displaystyle \max_{i\in\mathbb{N}\cup\{0\}}\max_{\alpha\in [t_i,t_{i+1}]}(a(\alpha)Q^{-1}(\alpha))<\infty$ and $\lambda>\tilde\lambda_{0}$ then problem \eqref{Pg} has no solution in $(t_i,t_{i+1})$.
		
	\end{enumerate}
\end{cor}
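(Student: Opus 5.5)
The corollary follows by applying, on each interval $(t_i,t_{i+1})$ separately, the single-interval argument behind Theorem \ref{thm-1} in the case where $Q$ is increasing. The key observation is that the proof of Theorem \ref{thm-1} is local in $i$: for a fixed $i$ it only uses $a(t_i)=a(t_{i+1})=0$, $a>0$ on $(t_i,t_{i+1})$, and the properties of $Q$ from Lemma \ref{l1}. Hypothesis \eqref{a2} supplies exactly these for every $i\in\mathbb{N}\cup\{0\}$.

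For a fixed $i$, the reduction works as follows. A function $u$ solves \eqref{Pg} with $g(u)=\alpha\in(t_i,t_{i+1})$ if and only if $u=w_s$ with $s=\lambda/a(\alpha)$ and $Q(s)=\alpha$. Since $Q$ is an increasing homeomorphism from $(\lambda_1/\beta,\lambda_1/\theta)$ onto $(0,\infty)$, this is equivalent to
\[
H(\alpha):=a(\alpha)Q^{-1}(\alpha)=\lambda, \qquad \alpha\in(t_i,t_{i+1}).
\]
The function $H$ is continuous on $[t_i,t_{i+1}]$, vanishes at both endpoints, and is positive inside. Set $\lambda_{0,i}:=\max_{[t_i,t_{i+1}]}H$, which is finite and positive.

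Part (i). Take $\lambda<\bar\lambda_0\le\lambda_{0,i}$ and let $\alpha_i^*$ be a maximizer of $H$ on $[t_i,t_{i+1}]$. By the intermediate value theorem, $H=\lambda$ has a root in $(t_i,\alpha_i^*)$ and a root in $(\alpha_i^*,t_{i+1})$. These give two solutions $u_{1,i},u_{2,i}$ with distinct values $g(u_{1,i})\ne g(u_{2,i})$, so the two are distinct. Solutions coming from different $i$ lie in disjoint $g$-intervals, so they are also distinct, which yields the two sequences. The hypothesis $\bar\lambda_0>0$ is exactly what guarantees that a single $\lambda$ lies below every $\lambda_{0,i}$ at once. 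This uniformity is the only genuinely new point compared with Theorem \ref{thm-1}(iii), where the minimum over finitely many positive numbers is automatically positive.

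Part (ii). Take $\lambda>\tilde\lambda_0\ge\lambda_{0,i}$ for every $i$. Then $H<\lambda$ on each $[t_i,t_{i+1}]$, so $H=\lambda$ has no root there, and hence no solution exists in any $(t_i,t_{i+1})$.

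The main obstacle is justifying the equivalence in the reduction step, namely the bijection between solutions and roots of $H=\lambda$. Two facts are needed. First, any solution $u$ has $a(g(u))>0$, by \eqref{f1} and the remark that solutions cannot satisfy $g(u)=t_i$. Second, $u$ then solves the auxiliary problem with $s=\lambda/a(g(u))$, so uniqueness (\cite{Brezis1986}) forces $u=w_s$. For this we need $s\in(\lambda_1/\beta,\lambda_1/\theta)$, which holds because a positive solution of $-\Delta w=sf(w)$ exists only for such $s$. With this equivalence in hand, the rest is one-variable calculus, identical to the proof of the monotone case of Theorem \ref{thm-1}.
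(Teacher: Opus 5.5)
Your proof is correct and is essentially the paper's argument. The corollary is the interval-by-interval application of the $Q$-increasing part of Theorem \ref{thm-1}, via Lemma \ref{equfixedpoints} and the equivalence \eqref{eq:key-ineq}, and your direct intermediate value argument on $H=aQ^{-1}$ is the same one-variable reasoning the paper uses in Proposition \ref{oscifixed}, which spares you the $c(\lambda)$-monotonicity detour of Proposition \ref{prop1}. One caveat, shared with the paper: for $i=0$, the vanishing of $H$ at $t_0=0$ requires $a(0)=0$ or $\beta=\infty$, since $Q^{-1}(\alpha)\to\lambda_1/\beta$ as $\alpha\to0^+$, whereas \eqref{a2} as written only imposes $a(t_i)=0$ for $i\in\mathbb{N}$, so both you and the paper (cf. the standing assumption $a(t')=a(t'')=0$ in Lemma \ref{equfixedpoints}) tacitly read it as including $t_0$.
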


\medskip
\subsection{The Powerlike Case}\label{pow}

Let  $f(t)=t^{p-1}$, $t> 0$, where  $p\neq 2$. In particular, we include the ranges $p>2$ (the superlinear case) and $-\infty<p<1$ (the singular case), which do not satisfy \eqref{f1}.
For such $f$ we can treat \eqref{Pg} by a simple procedure relying on homogeneity.
Indeed, assume that $v$ is a positive solution of the problem
\begin{equation}\label{le}
-\Delta v = v^{p-1} \quad \text{in }\Omega,
\qquad
v=0 \quad \text{on }\partial\Omega.
\end{equation}
and $C_v:=g(v)$. Let us assume that $g$ is a homogeneous functional, i.e.
$g(tu)=t^{\gamma}g(u)$ for some $\gamma>0$ and any $t>0$, $u \in C_0^1(\overline{\Omega})$. 
Then it is straightforward to see that
$u=sv$ (with $s>0$) solves \eqref{Pg} if, and only if,
\begin{equation*}
\lambda=\left(\frac{C_v}{\alpha}\right)^{\frac{p-2}{\gamma}} a(\alpha), \quad \alpha=C_v s^{\gamma}.
\end{equation*}
Note also that in this case $g(u)=g(sv)=C_v s^{\gamma}=\alpha$.
Therefore, setting
\begin{equation}\label{deel}
\lambda_{0,i}:=C_v^{\frac{p-2}{\gamma}}\displaystyle \max_{\alpha\in[t_i,t_{i+1}]}
a(\alpha)\alpha^{\frac{2-p}{\gamma}}
\end{equation}
 for $i=0,1,\ldots,k-1$, we obtain the following results:

\begin{theor}\label{thm3} Under the previous conditions on $f$, $g$, assume that $p<2$ and $a$ satisfies \eqref{a1}.
Then the following assertions hold:

\begin{enumerate}
\item[$(i)$] If $0<\lambda<\lambda_{0,i}$ then \eqref{Pg} has at least two solutions $u_{1,i},u_{2,i}$ in $(t_i,t_{i+1})$.

\item[$(ii)$] If $\lambda=\lambda_{0,i}$ then  \eqref{Pg} has at least one solution $u_i$ in $(t_i,t_{i+1})$.

\item[$(iii)$] If $\lambda>\lambda_{0,i}$ then problem \eqref{Pg} has no solution in $(t_i,t_{i+1})$.
\end{enumerate}
If moreover either $a$ or $\alpha \mapsto \alpha^{\frac{2-p}{\gamma}} a(\alpha)$ is strictly concave in $(t_i,t_{i+1})$ then $(i)$ and $(ii)$ hold with 'at least' replaced by 'exactly'.
\end{theor}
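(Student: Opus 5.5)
The plan is to reduce \eqref{Pg} exactly to the scalar equation $h(\alpha)=\lambda$, where
\[
h(\alpha):=C_v^{\frac{p-2}{\gamma}}\,a(\alpha)\,\alpha^{\frac{2-p}{\gamma}}, \qquad \alpha\in[t_i,t_{i+1}],
\]
and then read off $(i)$--$(iii)$ from the shape of $h$. The reduction needs one extra ingredient beyond the homogeneity computation in Subsection \ref{pow}: for $p<2$ the problem \eqref{le} has a \emph{unique} positive solution $v$. Uniqueness is standard in each range:
\begin{itemize}
\item for $1<p<2$ it follows from \cite{Brezis1986}, since $t^{p-1}/t$ is strictly decreasing;
\item for $p=1$ the problem is linear (torsion problem);
\item for $p<1$ it is the classical singular problem, where uniqueness follows from the comparison principle (Crandall--Rabinowitz--Tartar / Lazer--McKenna type arguments).
\end{itemize}
I take for granted, as the statement says "under the previous conditions", that $g(v)$ is well defined in the singular range.

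\textbf{Step 1: every solution is a multiple of $v$.} Let $u$ solve \eqref{Pg} with $\alpha:=g(u)\in(t_i,t_{i+1})$. Then $a(\alpha)>0$ and $-\Delta u=\mu u^{p-1}$ with $\mu=\lambda/a(\alpha)$. Set $w:=\mu^{-1/(2-p)}u$. A direct computation gives
\[
-\Delta w=\mu^{-\frac{1}{2-p}}\,\mu\,u^{p-1}=w^{p-1},
\]
so by uniqueness $w=v$, that is, $u=sv$ with $s=\mu^{1/(2-p)}$. Combined with the equivalence already derived in Subsection \ref{pow}, this shows the following. Solutions of \eqref{Pg} in $(t_i,t_{i+1})$ correspond exactly to $\alpha\in(t_i,t_{i+1})$ with $h(\alpha)=\lambda$, via $u=(\alpha/C_v)^{1/\gamma}v$. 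This correspondence is injective, because $g(u)=\alpha$ recovers $\alpha$ from $u$.

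\textbf{Step 2: shape of $h$.} The function $h$ is continuous on $[t_i,t_{i+1}]$, positive in the interior, and vanishes at both endpoints:
\begin{itemize}
\item at $t_{i+1}$, and at $t_i$ for $i\ge1$, because $a(t_j)=0$ there;
\item at $t_0=0$, because $\alpha^{(2-p)/\gamma}\to0$ (note $2-p>0$) while $a$ stays bounded.
\end{itemize}
Its maximum over $[t_i,t_{i+1}]$ is $\lambda_{0,i}$ by \eqref{deel}, and it is attained at some interior point $\alpha^*$.
\begin{itemize}
\item If $0<\lambda<\lambda_{0,i}$, the intermediate value theorem on $(t_i,\alpha^*)$ and on $(\alpha^*,t_{i+1})$ gives two distinct roots of $h=\lambda$, hence two solutions. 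This is $(i)$.
\item If $\lambda=\lambda_{0,i}$, then $\alpha^*$ itself gives a solution. This is $(ii)$.
\item If $\lambda>\lambda_{0,i}$, then $h=\lambda$ has no root. This is $(iii)$.
\end{itemize}

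\textbf{Step 3: exact multiplicity.} It suffices to show that $h$ is strictly quasi-concave on $(t_i,t_{i+1})$: then it has a unique maximizer and every level set below its maximum has at most two points.
\begin{itemize}
\item If $\alpha\mapsto\alpha^{(2-p)/\gamma}a(\alpha)$ is strictly concave, this is immediate.
\item If only $a$ is strictly concave, the product is not concave in general. Instead I pass to logarithms:
\[
\log h=\text{const}+\log a+\tfrac{2-p}{\gamma}\log\alpha .
\]
Here $\log a$ is strictly concave, being the logarithm of a positive strictly concave function, and $\log\alpha$ is concave. So $\log h$ is strictly concave, and $h$ is strictly quasi-concave.
\end{itemize}

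\textbf{Main obstacle.} The only genuinely delicate point is Step 1, namely uniqueness for \eqref{le} in the singular range $p\le1$, together with the well-definedness of $g(v)$ there. I would handle this by citing the known comparison and uniqueness results rather than reproving them. Everything else is elementary one-variable analysis.
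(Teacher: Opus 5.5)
Your proof is correct and follows the paper's argument. The homogeneity reduction $u=sv$ turns \eqref{Pg} into the scalar equation $C_v^{\frac{p-2}{\gamma}}a(\alpha)\alpha^{\frac{2-p}{\gamma}}=\lambda$, uniqueness for \eqref{le} guarantees every solution has this form (the paper only invokes this implicitly, in the remark following the theorem), and the intermediate value theorem on $[t_i,t_{i+1}]$ finishes. The only difference is cosmetic and lies in the exactness step: the paper notes that $\alpha\mapsto a(\alpha)-\lambda(\alpha/C_v)^{\frac{p-2}{\gamma}}$ is strictly concave (strictly concave minus convex, since $\frac{p-2}{\gamma}<0$), so it has at most two zeros, whereas you get the same level-set structure from strict concavity of $\log h$; both work for every $p<2$.
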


The exactness results in the previous theorem follow from the existence and uniqueness of positive solution of \eqref{le} in the sublinear and singular cases, see e.g. \cite{Brezis1986} for $1<p<2$ and \cite{OP} for $-\infty<p<1$. Whereas existence also holds for $2<p<2^*$, uniqueness fails in general  as this problem may have multiple positive solutions (which cannot be multiples of each other), at least for some choices of $\Omega$, cf. \cite{BL} and references therein. In such case we obtain a high number of positive solutions of \eqref{Pg} for $\lambda>0$ small enough:

\begin{theor}\label{thm3'} Under the previous conditions on $f$ and $g$, assume that $2<p<2^*$ and $a$ satisfies \eqref{a1}. Let $v$ be a positive solution of \eqref{le} and  assume that
$$
\mu_0:=\lim_{\alpha\to 0^+}a(\alpha)\alpha^{\frac{2-p}{\gamma}}=0.
$$
Then the following assertions hold:

\begin{enumerate}
\item[$(i)$] If $0<\lambda<\lambda_{0,i}$ then \eqref{Pg} has at least two solutions $u_{1,i}=s_{1,i}v$ and $u_{2,i}=s_{2,i}v$ in $(t_i,t_{i+1})$.

\item[$(ii)$] If $\lambda=\lambda_{0,i}$ then  \eqref{Pg} has at least one solution $u_i=s_i v$ in $(t_i,t_{i+1})$.

\item[$(iii)$] If $\lambda>\lambda_{0,i}$ then problem \eqref{Pg} has no solution of the form $u=sv$ in $(t_i,t_{i+1})$.
\end{enumerate}
If moreover either $a$ is strictly concave in $(t_i,t_{i+1})$ and $\gamma+2<p<2^*$ or $\alpha \mapsto \alpha^{\frac{2-p}{\gamma}} a(\alpha)$ is strictly concave, then $(i)$ and $(ii)$ hold with 'at least' replaced by 'exactly'.
\end{theor}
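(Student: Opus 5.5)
The homogeneity reduction from Subsection \ref{pow} turns Theorem \ref{thm3'} into a one-dimensional question. Fix a positive solution $v$ of \eqref{le} and put $C_v=g(v)>0$. Then $u=sv$ solves \eqref{Pg} with $g(u)=\alpha$ if and only if $\alpha=C_v s^\gamma$ and
\[
\lambda = h(\alpha):=C_v^{\frac{p-2}{\gamma}}\,a(\alpha)\,\alpha^{\frac{2-p}{\gamma}} .
\]
The map $s\mapsto C_v s^\gamma$ is a bijection from $(0,\infty)$ onto $(0,\infty)$. So solutions of the form $sv$ in $(t_i,t_{i+1})$ correspond one-to-one to roots $\alpha\in(t_i,t_{i+1})$ of $h(\alpha)=\lambda$.

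I would therefore study the continuous function $h$ on $[t_i,t_{i+1}]$, using the convention $h(0):=0$. This convention is justified for $i=0$ by the hypothesis $\mu_0=0$. For $i\ge1$ we have $t_i>0$, so $h(t_i)=C_v^{(p-2)/\gamma}a(t_i)t_i^{(2-p)/\gamma}=0$. In every case $h(t_{i+1})=0$, and $h>0$ on $(t_i,t_{i+1})$ by \eqref{a1}. Hence $h$ attains its maximum $\lambda_{0,i}>0$, as in \eqref{deel}, at some interior point $\alpha_i^*$.

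\begin{itemize}
\item For $0<\lambda<\lambda_{0,i}$, the intermediate value theorem on $[t_i,\alpha_i^*]$ and on $[\alpha_i^*,t_{i+1}]$ gives two distinct roots $\alpha_{1}<\alpha_i^*<\alpha_{2}$. The corresponding $s_{j,i}=(\alpha_j/C_v)^{1/\gamma}$ yield the two solutions in (i).
\item At $\lambda=\lambda_{0,i}$, the root $\alpha_i^*$ gives (ii).
\item For $\lambda>\lambda_{0,i}$ there is no root, which gives (iii).
\end{itemize}

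The only delicate point here is the continuity of $h$ at $\alpha=0$ when $i=0$. Since $p>2$, the factor $\alpha^{(2-p)/\gamma}$ blows up, and this is exactly why the hypothesis $\mu_0=0$ is imposed. Without it, $h$ need not be bounded near $0$, and the left-hand root could disappear.

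For exactness, the claim is that $h$ has at most two roots at each level below its maximum and exactly one at the maximum. If $\alpha\mapsto \alpha^{(2-p)/\gamma}a(\alpha)$ is strictly concave on $(t_i,t_{i+1})$, then $h$ is strictly concave, and a strictly concave function takes each value at most twice and its maximum exactly once. This gives "exactly" in (i) and (ii), counted among solutions of the form $sv$.

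In the other case, $a$ is strictly concave and $p>\gamma+2$, so $q:=(p-2)/\gamma>1$. Any root satisfies $a(\alpha)=\lambda C_v^{-q}\alpha^{q}$, and the right-hand side is strictly convex. The difference $a(\alpha)-c\,\alpha^q$ is then strictly concave, so it has at most two zeros, and at most one when the level is the maximal one, where the two curves touch.

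The main obstacle I expect is stating this exactness honestly. Uniqueness of $v$ fails for $p>2$, so "exactly" can only refer to solutions proportional to the fixed $v$. I would state it that way, consistent with (iii).
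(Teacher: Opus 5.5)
Your proposal is correct and follows the paper's argument. You reduce, via homogeneity, to the scalar equation $\lambda=C_v^{(p-2)/\gamma}a(\alpha)\alpha^{(2-p)/\gamma}$, apply the intermediate value theorem on both sides of an interior maximizer, and get exactness from strict concavity of either $h$ or $\alpha\mapsto a(\alpha)-\lambda(\alpha/C_v)^{(p-2)/\gamma}$ when $(p-2)/\gamma>1$. You are in fact more explicit than the paper about why $\mu_0=0$ is needed, and only for $i=0$: it makes $h$ continuous and zero at $\alpha=0$.
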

Let us stress that exactness in Theorem \ref{thm3'} is meant among solutions in $(t_i,t_{i+1})$ of the form $u=sv$.
\begin{rem}\label{rm1}
Let $i=0$ and $2<p<2^*$. If $\mu_0>0$ then we have the following alternative in Theorem \ref{thm3'} (we write $\lambda_0=\lambda_{0,0}$):
\begin{enumerate}
\item If $\mu_0<\lambda_0$ then \eqref{Pg} has at least one solution $u=sv$ in $(0,t_1)$ for  $0<\lambda\le\mu_0$ and $\lambda=\lambda_0$. If $\lambda\in (\mu_0,\lambda_0)$, then it has at least two solutions $u=s_1v$ and $u=s_2v$ in $(0,t_1)$.
\item If $\mu_0\ge\lambda_0$ (including the case $\mu_0=\infty$), then problem \eqref{Pg} has at least one solution $u=sv$ in $(0,t_1)$ for  $0<\lambda<\mu_0$.
\end{enumerate}
\end{rem}

\noindent

Here are some examples:

\begin{exa}\label{exa1} Let $a(\alpha)=|\sin(\alpha)|$, $\alpha\ge 0$, and $f(t)=t^{p-1}$, $t> 0$, where $p<2^*$, $p\neq 2$. If $0<\lambda< \displaystyle\left(\frac{\pi}{2C_v}\right)^\frac{2-p}{\gamma}$ then  \eqref{Pg} has at least two solutions $u_{1,i},u_{2,i}$ in $(i\pi,(i+1)\pi)$ for any $i\in\mathbb{N}\cup\{0\}$ $(i\in \mathbb{N} \ \text{if} \ p>2)$.
Moreover, the previous statement holds with 'at least' replaced by 'exactly'  if either $1<p<2$ or $\gamma+2<p<2^*$ (in the latter case exactness holds among multiples of $v$)
\end{exa}

A result similar to the latter one was obtained for $2<p<2^*$ and $g(u)=\int_\Omega |\nabla u|^2 dx$ in \cite[Theorem 5.4]{FS}.
\vskip.3cm

\begin{exa}\label{exa2} Let $\displaystyle a(\alpha)=|\sin(\alpha)|\alpha^{\frac{p-2}{\gamma}}$, $\alpha\ge 0$, and $f(t)=t^{p-1}$, $t> 0$, where $p<2^*$, $p\neq 2$.
\begin{enumerate}
    \item[$(i)$] If $0<\lambda< C_v^\frac{p-2}{\gamma}$ then for any $i\in \mathbb{N}\cup\{0\}$ the problem  \eqref{Pg} has at least two  solutions $u_{1,i},u_{2,i}$ such that $i\pi<g(u_{1,i}) <g(u_{2,i}) <(i+1)\pi$.
In particular \eqref{Pg} has at least two sequences of solutions.
   \item[$(ii)$] If $\lambda=C_v^\frac{p-2}{\gamma}$ then for any $i\in \mathbb{N}$ the problem \eqref{Pg} has at least one solution $u_i$ such that
$g(u_i) =\frac{\pi}{2}+i\pi$.
In particular \eqref{Pg} has at least a sequence of solutions.
  \item[$(iii)$] If $\lambda>C_v^\frac{p-2}{\gamma}$ then for any $i\in \mathbb{N}\cup\{0\}$ there is no solution of \eqref{Pg} of the form $u=sv$ satisfying
$i\pi<g(u)<(i+1)\pi$.

\end{enumerate}
Moreover, $(i)$ and $(ii)$ hold with 'at least' replaced by 'exactly'  (if $p>2$ then exactness holds among multiples of $v$).
   
\end{exa}
\medskip

Finally, we note that the method described in the Introduction in the case $1<p<2$ provides the same value of $\lambda_{0,i}$. Indeed, since $g$ is $\gamma$-homogeneous we have
\[
Q(s)=g(w_s)
=C_v\,s^{\frac{\gamma}{2-p}}.
\]

In particular,
\[
Q^{-1}(\alpha)=\left(\frac{\alpha}{C_v}\right)^{\frac{2-p}{\gamma}}, \quad \mbox{i.e.} \quad a(\alpha)\,Q^{-1}(\alpha)=a(\alpha)\left(\frac{\alpha}{C_v}\right)^{\frac{2-p}{\gamma}}
\qquad \mbox{for } \alpha>0.
\]

\medskip
\textbf{Outline of the paper:}
This paper is organized as follows. In Section \ref{secauxiliary}, we introduce and study the auxiliary problem, establishing the existence and uniqueness of its solution and the key properties of the map $Q$. In Section \ref{secfg}, we develop the fixed-point framework and establish the existence of fixed points under general conditions. Section \ref{sqincreasing} refines these results for the case where $Q$ is increasing, leading to sharp parameter thresholds. In Section 4 we prove our main theorems, which are then applied to the classical Kirchhoff and Carrier equations, and to problems with power-type nonlinearities. Finally, in Section 5 we show that assuming a bit more regularity on $f$ allows us to deal with another class of $g$. 

\section{The Auxiliary Problem}\label{secauxiliary}

In this section we study the auxiliary problem
\begin{equation}\label{ws}
\begin{cases}
-\Delta w = s\, f(w) & \text{in }\Omega,\\
w>0 & \text{in }\Omega,\\
w=0 & \text{on }\partial\Omega,
\end{cases}
\end{equation}
where $s>0$ is fixed and $f$ satisfies \eqref{f1}.

We extend $f$ to $\mathbb R$ by
$f(t)=0$ for $t<0$, and we set
$$
F(t):=\int_0^t f(\tau)\,d\tau \ \text{for }t\in\mathbb R,
$$
so that $F(t)=0$ for $t\le 0$. Let us set, for $s>0$,
\[
\Phi_s(u):=\frac12\int_\Omega |\nabla u|^2\,dx \;-\; s\int_\Omega F(u)\,dx,
\qquad u\in H_0^1(\Omega).
\]

\begin{lem}\label{lem:unified-minimizer}

Let $c_s:=\inf \Phi_s$. The following statements hold:
\begin{enumerate}
\item[(i)] $c_s$ is attained for every $s<\lambda_1/\theta$.
\item[(ii)] If $K\subset (\lambda_1/\beta,\lambda_1/\theta)$ is a compact set, then there exists $c<0$ such that $c_s<c$ for any $s \in K$. In particular,
every global minimizer $w_s$ is nontrivial and satisfies $w_s>0$.
\item[(iii)] Problem \eqref{ws} has a solution if, and only if, $s\in (\lambda_1/\beta,\lambda_1/\theta)$. Moreover, in this case the solution of \eqref{ws} is unique, and it is given by $w_s$. Furthermore $w_s\in C^1(\overline{\Omega})$.
\end{enumerate}
\end{lem}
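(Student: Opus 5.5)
Part (i) is a coercivity and weak lower semicontinuity argument. Fix $s<\lambda_1/\theta$ and pick $\varepsilon>0$ with $s(\theta+\varepsilon)<\lambda_1$. By \eqref{f1} and the definition of $\theta$, there is $C_\varepsilon$ with $F(t)\le \frac{\theta+\varepsilon}{2}t^2+C_\varepsilon$ for $t\ge0$. When $\theta=0$ this already gives subquadratic control (and \eqref{f2} even gives $F(t)\le Ct^p+C$ with $p<2$). Using the Poincaré inequality, we get
\[
\Phi_s(u)\ge \tfrac12\bigl(1-s(\theta+\varepsilon)/\lambda_1\bigr)\|\nabla u\|_2^2-sC_\varepsilon|\Omega|,
\]
so $\Phi_s$ is coercive and bounded below on $H_0^1(\Omega)$. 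Since $|F(t)|\le C(1+t^2)$ and $H_0^1\hookrightarrow L^2$ compactly, $u\mapsto\int F(u)$ is weakly continuous and the Dirichlet term is weakly l.s.c. Hence $c_s$ is attained.

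For (ii), take the positive normalized eigenfunction $\varphi_1$, with $\|\nabla\varphi_1\|_2^2=\lambda_1\|\varphi_1\|_2^2$. Let $s_0=\min K>\lambda_1/\beta$ and choose $\mu<\beta$ with $s_0\mu>\lambda_1$. By the definition of $\beta$, $F(t)\ge \frac{\mu}{2}t^2$ for $0\le t\le\delta$. Then for small $\tau>0$ and every $s\in K$,
\[
\Phi_s(\tau\varphi_1)\le \tfrac{\tau^2}{2}(\lambda_1-s_0\mu)\|\varphi_1\|_2^2=:c<0 ,
\]
which is uniform in $s\in K$. A minimizer is thus nontrivial. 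It is a weak solution of $-\Delta w=sf(w)$ with $f=0$ on $(-\infty,0)$. Testing with $w^-$ gives $w\ge0$. The growth $|f(t)|\le C(1+|t|)$ gives $w\in L^\infty$ by bootstrap and $C^{1,\alpha}(\overline\Omega)$ by elliptic regularity. Since $f\ge0$, the strong maximum principle gives $w>0$.

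For (iii), the "if" part and regularity follow from (i)–(ii), and uniqueness is the Brezis–Oswald argument \cite{Brezis1986}. Given two solutions $w,z$, test the equations with $(w^2-z^2)/w$ and $(w^2-z^2)/z$ (admissible via Hopf's lemma, since both are $C^1$ and comparable to the distance to $\partial\Omega$). Subtracting yields
\[
0\le\int\Bigl|\nabla w-\tfrac wz\nabla z\Bigr|^2+\Bigl|\nabla z-\tfrac zw\nabla w\Bigr|^2=s\int\bigl(\psi(w)-\psi(z)\bigr)(w^2-z^2)\le0 .
\]
Strict decrease of $\psi$ then forces $w=z$; in particular the solution is $w_s$. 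For necessity, test the equation with $\varphi_1$:
\[
\lambda_1\int w\varphi_1=s\int \psi(w)w\varphi_1 .
\]
Because $\theta<\psi(w(x))<\beta$ pointwise (strict monotonicity, with $w$ bounded and positive), we get $s\theta<\lambda_1<s\beta$, i.e. $s\in(\lambda_1/\beta,\lambda_1/\theta)$.

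The main obstacle is the rigorous justification of the Brezis–Oswald test functions, namely that $w/z$ and $z/w$ are bounded. This needs Hopf's lemma applied to $C^1$ solutions, so regularity up to the boundary must be established first. That in turn requires care when $\beta=\infty$ (e.g.\ $f(t)=t^{p-1}$), where $f$ is not Lipschitz at $0$; there I would rely on $f$ being continuous with linear growth, so that $f(w)\in L^\infty$ suffices for $W^{2,q}$ estimates.
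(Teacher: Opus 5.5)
Your proposal is correct and follows the same route as the paper. Part (i) uses coercivity from the growth of $F$ plus weak lower semicontinuity, part (ii) tests $\Phi_s$ on small multiples of $\varphi_1$, and part (iii) uses the Brezis--Oswald theory. Your version also fills in details the paper leaves implicit or delegates to the citation: uniformity over $K$ via $\mu<\beta$ with $(\min K)\mu>\lambda_1$, nonnegativity and positivity of minimizers, the explicit uniqueness identity justified by Hopf's lemma, and the $\varphi_1$-test for the necessity of $s\in(\lambda_1/\beta,\lambda_1/\theta)$.
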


\begin{proof}

Fix $\varepsilon>0$. By the definition of $\theta$ there exists $T_\varepsilon>0$ such that
$f(t)\le (\theta+\varepsilon)t$ for all $t\ge T_\varepsilon$.  Setting
$M_\varepsilon:=\max_{[0,T_\varepsilon]} f <\infty$, we have for every $t\ge 0$,
\begin{equation}\label{eq:growth-f}
f(t)\le (\theta+\varepsilon) t+M_\varepsilon.
\end{equation}
Integrating \eqref{eq:growth-f} from $0$ to $t\ge 0$ gives
\begin{equation}\label{eq:growth-F-pos}
F(t)\le \frac{\theta+\varepsilon}{2}t^2+M_\varepsilon t \qquad \forall\,t\ge 0.
\end{equation}
Since $F(t)=0$ for $t\le 0$, it follows that
\begin{equation}\label{eq:growth-F-all}
0\le F(t)\le  \frac{\theta+\varepsilon}{2}t^2+M_\varepsilon |t|
\qquad \forall\,t\in\mathbb R.
\end{equation}
Using Poincar\'e's and Holder's inequalities, we obtain
\[
\Phi_s(u)\ge \frac12\int_\Omega |\nabla u|^2\,dx
- s\Big( \frac{\theta+\varepsilon}{2\lambda_1}\int_\Omega |\nabla u|^2\,dx+CM_\varepsilon\|u\|\Big)
=
\frac12\Big(1-\frac{s(\theta+\varepsilon)}{\lambda_1}\Big)\|u\|_{H_0^1(\Omega)}^2 - sCM_\varepsilon\|u\|.
\]
Hence, for $s<\lambda_1/\theta$ we can choose $\varepsilon>0$ so small that $s(\theta+\varepsilon)<\lambda_1$.
Then $\Phi_s$ is coercive and weakly lower semicontinuous, so it attains its global minimum
at some $w\in H_0^1(\Omega)$.

\medskip

Let $\varphi_1>0$ be the $L^2$-normalized first eigenfunction. Since $f(t)/t\to\beta$ as $t\to0^+$, we have
\[
\lim_{t\to0^+}\frac{F(t)}{t^2}=\lim_{t\to0^+}\frac{f(t)}{2t}=\frac{\beta}{2}
\quad\text{(possibly $+\infty$ if $\beta=\infty$)}.
\]
Hence for $t>0$ small,
\[
\int_\Omega F(t\varphi_1)\,dx = \frac{\beta}{2}t^2\int_\Omega \varphi_1^2\,dx + o(t^2)
= \frac{\beta}{2}t^2 + o(t^2),
\]
and
\[
\Phi_s(t\varphi_1)=\frac12 t^2\int_\Omega |\nabla\varphi_1|^2\,dx - s\int_\Omega F(t\varphi_1)\,dx
=\frac12 t^2\lambda_1 - s\Big(\frac{\beta}{2}t^2 + o(t^2)\Big).
\]
Thus for $s>\lambda_1/\beta$ we find that
$\Phi_s(t\varphi_1)<c_s<0$ for $t>0$ small enough. It is also clear that if $s\in K\subset (\lambda_1/\beta,\lambda_1/\theta)$, then $c_s<c<0$ for some constant $c$. This proves $(i)$ and $(ii)$. Finally, $(iii)$ follows from \cite{Brezis1986} and standard regularity theory.

\end{proof}

From now on, we assume \eqref{g1}, and we recall that
\[
Q:(\lambda_1/\beta,\lambda_1/\theta)\to(0,\infty),
\qquad
Q(s):=g(w_s),
\]
where $w_s$ is given by Lemma \ref{lem:unified-minimizer}.  Furthermore, $Q$ has the following properties:

\begin{lem}\label{l1}\strut
\begin{enumerate}

\item[$(i)$]  If $\lambda_1/\beta<s_1<s_2<\lambda_1/\theta$, then $w_{s_1}\leq w_{s_2}$ and $w_{s_1}\not \equiv w_{s_2}$ in $\Omega$. 

\item[$(ii)$]  If $s_n\to s\in (\lambda_1/\beta,\lambda_1/\theta)$, then $w_{s_n}\to w_s$ in $C^1(\overline{\Omega})$.
Consequently, $Q$ is continuous.

\item[$(iii)$] $w_s \to 0$ in $C^1(\overline{\Omega})$ as $s \to (\lambda_1/\beta)^+$, and $\displaystyle \lim_{s\to (\lambda_1/\theta)^-}{w_s(x)}=\infty$, for all $x\in\Omega$.

\item[$(iv)$] The following convergence holds (if $\theta=0$ we further assume $\lim_{t\to\infty}f(t)/t^{p-1}=c_0$, for some $c_0>0$ and  $1<p<2$): $|\nabla w_s(x)|\to\infty$ as $s\to (\lambda_1/\theta)^-$, in some positive measure subset of $\Omega$.

\end{enumerate}
If, in addition, $Q$ is  increasing then it is a homeomorphism from $(\lambda_1/\beta,\lambda_1/\theta)$ onto $(0,\infty)$, and thus it
admits an inverse $Q^{-1}:(0,\infty)\to(\lambda_1/\beta,\lambda_1/\theta)$.
\end{lem}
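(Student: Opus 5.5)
For $(i)$, the plan is to use the sub/supersolution comparison for sublinear problems. Fix $s_1<s_2$ in the admissible interval. Since $f\ge 0$ and $w_{s_2}>0$, we have
\[
-\Delta w_{s_2}=s_2 f(w_{s_2})\ge s_1 f(w_{s_2}),
\]
so $w_{s_2}$ is a supersolution of \eqref{ws} with parameter $s_1$. Next, $\varepsilon\varphi_1$ is a subsolution for $\varepsilon$ small, because $s_1\beta>\lambda_1$. By Hopf's lemma we may take $\varepsilon\varphi_1\le w_{s_2}$. Monotone iteration then produces a solution $w$ of \eqref{ws} at level $s_1$ with $\varepsilon\varphi_1\le w\le w_{s_2}$. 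Uniqueness (Lemma \ref{lem:unified-minimizer}(iii)) gives $w=w_{s_1}$, hence $w_{s_1}\le w_{s_2}$. If $w_{s_1}\equiv w_{s_2}$, then $(s_2-s_1)f(w_{s_1})\equiv 0$, which contradicts $f>0$ on $(0,\infty)$.

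For $(ii)$, let $s_n\to s$ and pick $s'<s_n<s''$ inside the interval. By $(i)$, $w_{s'}\le w_{s_n}\le w_{s''}$, so $(w_{s_n})$ is bounded in $L^\infty$. Elliptic $W^{2,q}$ estimates then give a bound in $C^{1,\alpha}(\overline\Omega)$. Compactness yields a subsequence converging in $C^1$ to some $w$ with $-\Delta w=sf(w)$ and $w\ge w_{s'}>0$. Uniqueness forces $w=w_s$, so the whole sequence converges. Continuity of $Q$ follows from continuity of $g$ on $C^1_0$.

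For $(iii)$, first let $s\downarrow\lambda_1/\beta$; by $(i)$ the family $w_s$ is monotone. If $\beta=\infty$ this limit is $s\to0$. Suppose $\|w_s\|_\infty$ stays bounded. Then the same compactness argument gives a $C^1$ limit $w\ge0$ solving $-\Delta w=(\lambda_1/\beta)f(w)$. If $w\not\equiv0$, then $w>0$ and Lemma \ref{lem:unified-minimizer}(iii) is contradicted. So $w=0$, and the monotone limit gives convergence of the whole family.

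It remains to exclude unbounded $\|w_s\|_\infty$. I would test the equation against $\varphi_1$:
\[
\lambda_1\int w_s\varphi_1=s\int f(w_s)\varphi_1 .
\]
Since $\psi$ is strictly decreasing and $w_s$ increases pointwise, blow-up would give $\int w_s\varphi_1\to\infty$. I expect this to contradict the monotone decrease as $s$ decreases, which bounds $w_s\le w_{s_0}$. In fact this monotonicity is immediate, so boundedness is automatic.

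As $s\uparrow\lambda_1/\theta$, $w_s$ increases to a limit $W\le\infty$. Suppose $W(x_0)<\infty$ at some point. Normalize $v_s=w_s/\|w_s\|_\infty$ when the norms blow up, or pass to the limit when they stay bounded.
\begin{itemize}
\item If $\|w_s\|_\infty$ stays bounded, the limit solves \eqref{ws} at $s=\lambda_1/\theta$, which is impossible by Lemma \ref{lem:unified-minimizer}(iii).
\item If $\|w_s\|_\infty\to\infty$, then $v_s\to v$ in $C^1$. When $\theta>0$, $v$ satisfies $-\Delta v=\lambda_1 v$ with $\|v\|_\infty=1$, so $v=c\varphi_1>0$ everywhere. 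When $\theta=0$, use \eqref{f2}: rescaling gives $v$ solving a sublinear Lane--Emden problem with $\|v\|_\infty=1$, which is positive by the strong maximum principle. In both cases $w_s=\|w_s\|_\infty v_s\to\infty$ at every interior point.
\end{itemize}
The main obstacle is this normalized limit, in particular the case where the rescaled limit could degenerate.

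For $(iv)$, the normalized limit $v$ is nontrivial and $v_s\to v$ in $C^1$. Hence $\nabla v\ne0$ on a positive measure set, for instance near $\partial\Omega$ by Hopf's lemma. On that set $|\nabla w_s|=\|w_s\|_\infty|\nabla v_s|\to\infty$.

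For the final claim, $Q$ is continuous and increasing. By $(iii)$ together with \eqref{g1} or \eqref{g2}, $Q(s)\to g(0)=0$ at the left end and $Q(s)\to\infty$ at the right end; here $(iii)$ is used for \eqref{g1} and $(iv)$ for \eqref{g2}. Hence $Q$ is a bijection onto $(0,\infty)$. A continuous strictly monotone bijection between intervals has a continuous inverse, so $Q$ is a homeomorphism.
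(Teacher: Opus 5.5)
Your parts (i), (ii), the first half of (iii) and the homeomorphism claim are sound and close to the paper. In (i) you replace the paper's direct comparison by a sub/supersolution-plus-uniqueness argument; since $f$ is only continuous, invoke the truncation/variational form of that method rather than monotone iteration. In (ii) you get nontriviality of the limit from $w\ge w_{s'}$ instead of $\Phi_s(w)<0$.

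\textbf{The gap.} It lies in the second half of (iii), and with it (iv), when $\theta=0$. Item (iii) assumes only \eqref{f1}, yet your blow-up argument for $\theta=0$ invokes \eqref{f2}. Your first bullet (``the limit solves \eqref{ws} at $s=\lambda_1/\theta$'') is meaningless when $\lambda_1/\theta=\infty$. And, as you say yourself, the rescaled limit is never shown to be nondegenerate. Concretely, for $v_s=w_s/\|w_s\|_\infty$ one only has $-\Delta v_s=sf(w_s)/\|w_s\|_\infty\le Cs/\|w_s\|_\infty+Cs\|w_s\|_\infty^{p-2}$. So $C^1$ compactness needs a lower bound $\|w_s\|_\infty\ge c\,s^{1/(2-p)}$, and nothing in your sketch supplies it.

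\textbf{The missing ingredient.} It is the explicit subsolution the paper uses. Normalize $\|\varphi_1\|_\infty=1$ and set $\eta_s:=\psi^{-1}(\lambda_1/s)$. Because $\psi$ is decreasing and $0<\varphi_1\le 1$,
\[
-\Delta(\eta_s\varphi_1)=\lambda_1\eta_s\varphi_1\le s\,\psi(\eta_s\varphi_1)\,\eta_s\varphi_1=s f(\eta_s\varphi_1).
\]
The sublinear comparison principle then gives $w_s\ge\eta_s\varphi_1$, and $\eta_s\to\infty$ since $\lambda_1/s\to\theta^+$. This proves $w_s(x)\to\infty$ for every $x\in\Omega$ and every $\theta\in[0,\infty)$ at once, with no case analysis and no use of \eqref{f2}.

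Under \eqref{f2} the same subsolution also gives $\eta_s\sim\lambda_1^{-1/(2-p)}(c_0s)^{1/(2-p)}$, which is exactly the bound your normalization lacks. With it, $v_s\to v$ in $C^1$ along subsequences and $-\Delta v=\mu v^{p-1}$ with $\mu=\lim c_0 s\|w_s\|_\infty^{p-2}$. The case $\mu=0$ is excluded by $\|v\|_\infty=1$, so your Hopf argument for (iv) then goes through. The paper instead rescales by $a_s=(c_0s)^{1/(2-p)}$ and needs the extra supersolution $Ma_se$ for an upper bound, which sup-norm normalization avoids.

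For $\theta>0$ your route is fine. There, using Hopf near $\partial\Omega$ is lighter than the paper's appeal to the measure of the critical set of $\varphi_1$.
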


\begin{proof} $(i)$:
Let $\lambda_1/\beta<s_1<s_2<\lambda_1/\theta$ and set $w_j=w_{s_j}$. Then
\[
-\Delta w_2=s_2 f(w_2)\ge s_1 f(w_2),
\]
so $w_2$ is a supersolution for the problem with parameter $s_1$. By comparison, $w_1\le w_2$ in $\Omega$.
By uniqueness, $w_1\not\equiv w_2$, hence $w_1<w_2$ in some positive measure subset of $\Omega$.

\medskip
$(ii)$:
Let $s_n\to s\in(\lambda_1/\beta,\lambda_1/\theta)$ and set $w_n:=w_{s_n}$.  Since $f(t)\le \beta t$ for all $t>0$, we have
\[
-\Delta w_n = s_n f(w_n) \le s_n \beta w_n.
\]
By $(i)$ we can assume, without loss of generality, that $w_1\ge w_n$, $n\in\mathbb{N}$, so $w_n$ is bounded in $L^\infty(\Omega)$ by Lemma \ref{lem:unified-minimizer} $(iii)$.  Standard elliptic regularity estimates imply that $\{w_n\}$ is bounded
in $C^{1,\alpha}(\overline\Omega)$ for some $\alpha\in(0,1)$.
Hence, up to a subsequence,
\[
w_n \to w \quad \text{in } C^1(\overline\Omega),\ \ \ \mbox{in particular}\ \ \ w_n \to w \quad \text{in } H_0^1(\overline\Omega).
\]
for some $w\ge 0$ with $w\in H_0^1(\Omega)$. Passing to the limit in the weak formulation yields that $w$ solves \eqref{ws} with parameter $s$.
By Lemma \ref{lem:unified-minimizer}, we have that $\Phi_s(w)<0$, so $w\neq 0$, and by uniqueness, $w=w_s$. By continuity of $g$, we conclude that $Q(s_n)\to Q(s)$.

\medskip
$(iii)$:
Let $s_n \to (\lambda_1/\beta)^+$ and set $w_n:=w_{s_n}$.
We claim that $\|w_n\|_{C^1(\Omega)} \to 0$. By arguing exactly as in the previous item (using this time Lemma \ref{l1}(i), we can assume that $w_1\ge w_n$, $n\in\mathbb{N}$), up to a subsequence, we get
\[
w_n \to w \quad \text{in } C^1(\overline\Omega),
\]
for some $w\ge 0$ with $C^1(\overline\Omega)$. Passing to the limit in the weak formulation gives
\[
-\Delta w = (\lambda_1/\beta) f(w)
\quad \text{in } \Omega,
\qquad
w=0 \text{ on } \partial\Omega.
\]
By Lemma~\ref{lem:unified-minimizer}$(iii)$, problem \eqref{ws}
has no positive solution for $s=\lambda_1/\beta$.
Thus $w\equiv 0$ and $\|w_n\|_{C^1(\Omega)}\to 0$ as claimed.

\medskip

Assume $s\to(\lambda_1/\theta)^-$. Let $\varphi_1$ be the first eigenfunction of $-\Delta$ normalized in $L^\infty(\Omega)$, so $0<\varphi_1\le 1$ and $-\Delta \varphi_1=\lambda_1 \varphi_1$.
Set $\eta_s:=\psi^{-1}(\lambda_1/s)$ and $z_s:=\eta_s \varphi_1$. Recall that $\psi:(0,\infty)\to (\theta,\beta)$ is a decreasing homeomorphism. Since $\psi$ is decreasing and $0<\varphi_1\le 1$, we have $\psi(\eta_s \varphi_1)\ge \psi(\eta_s)=\lambda_1/s$, hence
\[
-\Delta z_s=\lambda_1\eta_s \varphi_1 \le s f(\eta_s \varphi_1)=s f(z_s),
\]
so $z_s$ is a subsolution and thus $w_s\ge z_s$ by comparison. Because $\lim_{t\to\infty}\psi(t)=\theta$, we have $\psi^{-1}(y)\to\infty$ as $y\to \theta^+$. Since $\lambda_1/s\to \theta^+$ as $s\to(\lambda_1/\theta)^-$, we get $\eta_s\to\infty$ and the result follows.

\medskip

$(iv)$ 
We are going to consider two cases.

\noindent
{\bf Case 1: $\theta>0$.} 

Let $s<\lambda_1/\theta$ and let $w_s$ be the corresponding positive solution.
Set
\[
u_s=\frac{w_s}{\|w_s\|_{L^\infty(\Omega)}}.
\]
Then $u_s$ satisfies
\begin{equation}\label{equa1}
-\Delta u_s = s\frac{f(w_s)}{w_s}u_s \quad \text{in }\Omega,
\qquad
u_s=0 \quad \text{on }\partial\Omega,
\end{equation}
and $\|u_s\|_\infty=1$. Since $f(t)/t$ is decreasing and
\[
\lim_{t\to\infty}\frac{f(t)}{t}=\theta,
\]
for every $\varepsilon>0$ there exists $T>0$ such that
\[
\frac{f(t)}{t}\le \theta+\varepsilon
\qquad \text{for all } t\ge T.
\]
Fix $\varepsilon>0$ so small that
\[
s(\theta+\varepsilon)<\lambda_1.
\]

We split $\Omega$ into two regions:
\[
\Omega_1=\{x\in\Omega: w_s(x)\ge T\},
\qquad
\Omega_2=\{x\in\Omega: w_s(x)<T\}.
\]

If $x\in\Omega_1$, then
\[
s\frac{f(w_s(x))}{w_s(x)}u_s(x) 
\le 
s(\theta+\varepsilon)
\le 
\frac{\lambda_1}{\theta} (\theta+\varepsilon).
\]

If $x\in\Omega_2$, then $w_s(x)\in [0,T]$, hence 
$f(w_s(x))\le C_T$, for some constant $C_T>0$ 
independent on $s$.
Therefore,
\[
s\frac{f(w_s(x))}{w_s(x)}u_s(x)
=s \frac{f(w_s(x))}{\|w_s\|_{L^\infty(\Omega)}}\le \frac{\lambda_1}{\theta}C_T
\]
for all $s$ sufficiently close to $(\lambda_1/\theta)^-$,
because by item $(iii)$,
$\|w_s\|_{L^\infty(\Omega)}\to \infty$ as $s\to (\lambda_1/\theta)^-$.
Hence the right-hand side of \eqref{equa1} is uniformly
bounded in $L^\infty(\Omega)$
for $s$ close to $(\lambda_1/\theta)^-$. 

By elliptic regularity we deduce that
$\{u_s\}$ is bounded in $C^{1,\alpha}(\overline{\Omega})$ for some
$\alpha\in(0,1)$. Hence, up to a subsequence,
\begin{equation}\label{conv1}
	u_s \to u \quad \text{in } C^1(\overline{\Omega})
\end{equation}
as $s\to (\lambda_1/\theta)^-$. By \eqref{f1} and item (iii), passing to the limit in the weak formulation of \eqref{equa1} as $s\to (\lambda_1/\theta)^-$, gives
\[
-\Delta u = \lambda_1 u \quad \text{in }\Omega,
\qquad
u=0 \quad \text{on }\partial\Omega,
\qquad
\|u\|_\infty=1.
\]
Thus $u=\varphi_1$, the first eigenfunction associated with $\lambda_1$. 
As the limit $\varphi_1$ in \eqref{conv1} does not depend on the choice of a subsequence, we have for the whole sequence
\[
u_s\to \varphi_1
\qquad\text{in } C^1(\overline{\Omega})
\quad\text{as } s\to (\lambda_1/\theta)^-.
\]
Consequently
\[
|\nabla u_s(x)|\to |\nabla \varphi_1(x)| \quad \mbox{for all} \ x\in\Omega,
\]
that is,
\begin{equation}\label{conv2}
	\frac{|\nabla w_s(x)|}{\|w_s\|_{L^\infty(\Omega)}}\to |\nabla \varphi_1(x)| \quad \mbox{for all} \ x\in\Omega.
\end{equation}
It is a consequence of item $(iii)$ and \eqref{conv2} that
\begin{equation}\label{conv3}
	|\nabla w_s(x)|\to\infty \ \mbox{for all} \ x\in A_{\varphi_1}:=\{y\in\Omega: |\nabla \varphi_1(y)|> 0\}.
\end{equation}
Taking into account that the set of critical points of $\varphi_1$ has zero $N$-dimensional measure (see for instance \cite{Naber2017, Hardt1989}), the result follows from \eqref{conv3}.

\medskip

\noindent
{\bf Case 2: $\theta=0$.}

Assume that there exist $p\in(1,2)$ and $c_0>0$ such that
\begin{equation}\label{hipo}
\lim_{t\to\infty}\frac{f(t)}{t^{p-1}}=c_0.
\end{equation}
Define
\[
a_s := (c_0 s)^{\frac1{2-p}}
\qquad\text{and}\qquad
v_s:=\frac{w_s}{a_s}.
\]
Since \eqref{hipo} holds, there exists $T>0$ such that
\[
\frac{c_0}{2}\, t^{p-1}\le f(t)\le 2c_0\, t^{p-1}
\qquad\text{for all }t\ge T.
\]
Moreover, it is also clear that there exists $C>0$ such that
\[
0\le f(t)\le C(1+t^{p-1})
\qquad\text{for all }t\ge0.
\]

\medskip

Recall from the proof of item $(iii)$ that 
\[
w_s\ge \eta_s \varphi_1 \qquad\text{in }\Omega,
\]
Therefore
\[
\|w_s\|_{L^\infty(\Omega)}\ge \eta_s.
\]
Now we estimate $\eta_s$ in terms of $a_s$. Indeed, note that 
\[
\lim_{s\to \infty}	\frac{\psi(\eta_s)}{\eta_s^{p-2}}=c_0.
\]
The definition of $\eta_s$ gives
\[
\lim_{s\to \infty}	\frac{\lambda_1\eta_s^{2-p}}{s}=c_0,
\]
hence
\[
\lim_{s\to \infty}	\frac{\eta_s}{\lambda_1^{-\frac1{2-p}} (c_0 s)^{\frac1{2-p}}}=\lim_{s\to \infty}	\frac{\eta_s}{ \lambda_1^{-\frac1{2-p}} a_s.}=1,
\]
so there exist $c_1>0$ and $s_1>0$ such that
\[
\|w_s\|_{L^\infty(\Omega)}\ge c_1 a_s
\qquad\text{for all }s\ge s_1.
\]
Equivalently,
\[
\|v_s\|_{L^\infty(\Omega)}\ge c_1
\qquad\text{for all }s\ge s_1.
\]

\medskip
Now we will provide a suitable supersolution to bound $w_s$ from above. Let $e\in C^{2,\alpha}(\overline{\Omega})$ be the unique solution of
\[
\begin{cases}
	-\Delta e =1 & \text{in }\Omega,\\
	e=0 & \text{on }\partial\Omega.
\end{cases}
\]
Fix $M>0$ and define
\[
W_s:= M a_s e.
\]
Then
\[
-\Delta W_s = M a_s.
\]
Note that
\[
s f(W_s)\le C s\bigl(1+W_s^{p-1}\bigr)
= C s + C s (M a_s e)^{p-1}.
\]
Since
\[
s a_s^{p-1}=\frac{a_s}{c_0},
\]
we conclude that
\[
s f(W_s)\le C s + \frac{C}{c_0} M^{p-1} e^{p-1} a_s.
\]
Hence
\[
-\Delta W_s - s f(W_s)
\ge
a_s\left(
M - \frac{Cs}{a_s} - \frac{C}{c_0}M^{p-1}\|e\|_\infty^{p-1}
\right).
\]
Since $1<p<2$, we have
\begin{equation}\label{-1-1}
	\lim_{s\to\infty}	\frac{s}{a_s}=\lim_{s\to\infty} \frac{s}{(c_0 s)^{1/(2-p)}} = 0.
\end{equation}

Now choose $M>0$ large enough so that
\[
M>\frac{2C}{c_0}M^{p-1}\|e\|_\infty^{p-1}.
\]
Then for $s$ large enough,
\[
M - \frac{Cs}{a_s} - \frac{C}{c_0}M^{p-1}\|e\|_\infty^{p-1}>0,
\]
and therefore
\[
-\Delta W_s \ge s f(W_s)\qquad\text{in }\Omega.
\]
Thus $W_s$ is a supersolution of the problem solved by $w_s$. By comparison,
\[
w_s\le W_s = M a_s e \qquad\text{in }\Omega
\]
for all large $s$. Hence there exists $c_2>0$ such that
\[
\|w_s\|_{L^\infty(\Omega)}\le c_2 a_s
\qquad\text{for all large }s.
\]
Equivalently,

\begin{equation}\label{-10}
	\|v_s\|_{L^\infty(\Omega)}\le c_2
	\qquad\text{for all large }s.
\end{equation}

\medskip

Since $w_s=a_s v_s$, we have that
\[
-\Delta v_s=\frac{s}{a_s}f(a_s v_s).
\]
From
\[
\frac{s}{a_s}f(a_s v_s)
\le C\frac{s}{a_s}+ C s a_s^{p-2} v_s^{p-1}
\]
and
\[
s a_s^{p-2}=\frac1{c_0},
\]
we deduce that
\[
\frac{s}{a_s}f(a_s v_s)
\le C\frac{s}{a_s}+\frac{C}{c_0}v_s^{p-1}.
\]
By \eqref{-1-1} and \eqref{-10}, $(v_s)$ is uniformly bounded in $L^\infty(\Omega)$, and therefore the right-hand side is uniformly bounded in $L^\infty(\Omega)$. Elliptic regularity yields
\[
\|v_s\|_{C^{1,\alpha}(\overline{\Omega})}\le C
\]
for some $\alpha\in(0,1)$ and some constant $C>0$ independent of $s$. Thus, 
\[
v_s\to v \qquad\text{in } C^1(\overline{\Omega}),
\]
Moreover, since 

\[
0<c_1\le \|v_s\|_{L^\infty(\Omega)}\le c_2,
\]
we have
\[
\|v\|_{L^\infty(\Omega)}\ge c_1>0.
\]
So $v\not\equiv0$.

\medskip

We claim that
\[
\frac{s}{a_s}f(a_s v_s)\to v^{p-1}
\qquad\text{pointwisely in }\Omega.
\]

Indeed, if $v(x)>0$ then $v_s(x)\to v(x)>0$, so
\[
a_s v_s(x)\to\infty.
\]
Therefore
\[
\lim_{s\to\infty}\frac{f(a_s v_s(x))}{(a_s v_s(x))^{p-1}}	= c_0,
\]
and so
\[
\lim_{s\to\infty}	\frac{s}{a_s}f(a_s v_s(x))=\lim_{s\to\infty}c_0 s a_s^{p-2} v_s(x)^{p-1}
=\lim_{s\to\infty}v_s(x)^{p-1}=v(x)^{p-1}.
\]

If $v(x)=0$ then $v_s(x)\to0$, and by \eqref{-1-1} we get 
\[
0\le \frac{s}{a_s}f(a_s v_s(x))
\le C\frac{s}{a_s}+\frac{C}{c_0}v_s(x)^{p-1}\to 0
=v(x)^{p-1}.
\]
So the pointwise convergence is proved.

The last estimate also shows a uniform $L^\infty$ bound on the right-hand side. Hence, for every test function $\phi\in C_c^\infty(\Omega)$,
\[
\int_\Omega \nabla v_s\cdot \nabla \phi\,dx
=
\int_\Omega \frac{s}{a_s}f(a_s v_s)\phi\,dx
\longrightarrow
\int_\Omega v^{p-1}\phi\,dx.
\]
Since $v_s\to v$ in $C^1(\overline{\Omega})$, we conclude that $v$ is a nontrivial nonnegative solution of
\[
\begin{cases}
	-\Delta u = u^{p-1} & \text{in }\Omega,\\
	u=0 & \text{on }\partial\Omega.
\end{cases}
\]
The strong maximum principle implies that
\[
v>0 \text{ in }\Omega \quad \mbox{and} \quad \partial_\nu v<0
\text{ on } \partial\Omega,
\]
where $\nu$ is the outer unit normal. Fixing $y_0\in\partial\Omega$, we have
\[
|\nabla v(y_0)|>0,
\]
and by continuity of $\nabla v$ on $\overline{\Omega}$, there exist an open neighborhood $U$ of $y_0$ and a constant $c_*>0$ such that
\[
|\nabla v(x)|\ge c_*
\qquad\text{for all }x\in U\cap\Omega.
\]

Then $E:=U\cap\Omega$  has positive Lebesgue measure. Since $v_s\to v$ in $C^1(\overline{\Omega})$, we have
\[
|\nabla v_s(x)|\ge \frac{c_*}{2}
\qquad\text{for all }x\in E
\]
for all sufficiently large $s$. Therefore, recalling that $w_s=a_s v_s$, we have
\[
|\nabla w_s(x)|
=
a_s |\nabla v_s(x)|
\ge
\frac{c_*}{2}a_s
\qquad\text{for all }x\in E.
\]
Since
\[
\lim_{s\to\infty}	a_s=\lim_{s\to\infty}(c_0 s)^{\frac1{2-p}}\to\infty,
\]
it follows that
\[
|\nabla w_s(x)|\to\infty
\qquad\text{for every }x\in E.
\]
This completes the proof.

\end{proof}

\begin{rem}\label{rem1}\strut
\begin{itemize}

\item[$(i)$] It is important to point out that the assumption \eqref{g1}, \eqref{g2} and items $(iii)-(iv)$ of Lemma \ref{l1} imply $Q(s)\to 0$ as $s\to (\lambda_1/\beta)^+$ and $Q(s)\to\infty$ as $s\to (\lambda_1/\theta)^-$.
\item[$(ii)$] Another immediate consequence of Lemma \ref{l1}$(iii)$ is that the $L^p, L^\infty, C_0^1$ and $W_0^{1, p}$ norms of $w_s$ tend to zero as $s\to (\lambda_1/\beta)^+$, and to infinity as $s\to (\lambda_1/\theta)^-$, for any $p\geq 1$.
\end{itemize}
\end{rem}

Fix $0\le t'<t''<\infty$ such that $a(t')=a(t'')=0$ and $a(\alpha)>0$ for $\alpha\in (t',t'')$. In view of Lemma \ref{lem:unified-minimizer}, for each $\lambda>0$, we introduce the admissible set
\begin{equation}\label{def:Dilambda}
D_{\lambda}:=\left\{\alpha\in(t',t'')\ :\ \frac{\lambda_1}{\beta}<\frac{\lambda}{a(\alpha)}<\frac{\lambda_1}{\theta}\right\}
=\left\{\alpha\in(t',t'')\ :\ \frac{\theta}{\lambda_1}\lambda<a(\alpha)<\frac{\beta}{\lambda_1}\lambda\right\}.
\end{equation}

Note that $D_{\lambda}=(t',t'')$ if $\beta=\infty$ and $\theta=0$. Also $D_{\lambda}=\emptyset$ if $\theta>0$ and $\max_{\alpha\in [t',t'']}a(\alpha)\le \frac{\theta}{\lambda_1}\lambda$.
More generally, we have
$$D_{\lambda}=\bigcup_{j\in \mathcal{J}_{\lambda}} I_{j,\lambda},$$ where $\{I_{j,\lambda}: j \in \mathcal{J}_{\lambda}\}$ is a family of disjoint open intervals.
\vskip.3cm

\begin{rem}\label{defDlbd}
	Write $A=\max_{[t', t'']}a$. From the definition of $D_\lambda$ one can easily see that $D_\lambda\neq\emptyset$ if, and only if, $\lambda\in (0, \frac{\lambda_1}{\theta}A)$, where $\frac{\lambda_1}{\theta}A=\infty$ if $\theta=0$. So, from now on, we assume that $\lambda$ belongs to this interval. 
\end{rem}

We shall consider the following auxiliary problem, for $\alpha\in D_\lambda$:
\begin{equation}
\tag{$P_{\alpha}$}\label{palpha}
\begin{cases}
- a(\alpha)\,\Delta u = \lambda f(u) & \text{in } \Omega, \\[6pt]
u > 0 & \text{in } \Omega, \\[6pt]
u = 0 & \text{on } \partial\Omega.
\end{cases}
\end{equation}
This problem is nothing but \eqref{ws} with $s=\frac{\lambda}{a(\alpha)}$.
Lemma \ref{lem:unified-minimizer} yields the following result:
\begin{prop}\label{p1}
 For every $\alpha\in D_{\lambda}$, the problem \eqref{palpha} has a unique
solution $u_\alpha$, which corresponds to $w_{\lambda/a(\alpha)}$, the unique positive global minimizer of $\Phi_{\lambda/a(\alpha)}$.
Moreover, if $K\subset D_{\lambda}$ is a compact set then there exists $c<0$ such that
$\Phi_{\lambda/a(\alpha)}(u_\alpha)<c$ for all $\alpha\in K$.
\end{prop}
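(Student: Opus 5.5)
The plan is to reduce everything to Lemma \ref{lem:unified-minimizer} through the rescaling $s=\lambda/a(\alpha)$. Fix $\alpha\in D_\lambda$. Since $\alpha\in(t',t'')$, we have $a(\alpha)>0$, so dividing \eqref{palpha} by $a(\alpha)$ shows that $u$ solves \eqref{palpha} exactly when $u$ solves \eqref{ws} with $s=\lambda/a(\alpha)$. The definition \eqref{def:Dilambda} of $D_\lambda$ says precisely that this $s$ lies in $(\lambda_1/\beta,\lambda_1/\theta)$. Lemma \ref{lem:unified-minimizer}$(iii)$ then gives existence and uniqueness of a positive solution, and identifies it with $w_s$. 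By parts $(i)$--$(ii)$ of that lemma, $w_s$ is the unique positive global minimizer of $\Phi_s$. So we may set $u_\alpha:=w_{\lambda/a(\alpha)}$.

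For the uniform negativity, let $K\subset D_\lambda$ be compact. The map $\alpha\mapsto \lambda/a(\alpha)$ is continuous on $K$, since $a$ is continuous and positive there. Hence the image $K':=\{\lambda/a(\alpha):\alpha\in K\}$ is a compact subset of $(\lambda_1/\beta,\lambda_1/\theta)$. Lemma \ref{lem:unified-minimizer}$(ii)$ applied to $K'$ yields $c<0$ with $c_s<c$ for all $s\in K'$. Since $u_\alpha$ attains $c_{\lambda/a(\alpha)}$, we get $\Phi_{\lambda/a(\alpha)}(u_\alpha)=c_{\lambda/a(\alpha)}<c$ for every $\alpha\in K$.

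The only point needing care is the uniformity in Lemma \ref{lem:unified-minimizer}$(ii)$, in case one wants to check it directly rather than quote it. Here I would fix a small $t>0$ and use $\Phi_s(t\varphi_1)\le \Phi_{s_{\min}}(t\varphi_1)$. This holds because $F\ge0$, so $\Phi_s$ is nonincreasing in $s$. Taking $s_{\min}=\min K'>\lambda_1/\beta$, the negativity of $\Phi_{s_{\min}}(t\varphi_1)$ gives a single constant $c$ valid for all of $K'$. Everything else is a direct translation of the earlier lemma.
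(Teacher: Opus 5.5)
Your proposal is correct and is essentially the paper's argument: the paper simply notes that \eqref{palpha} is \eqref{ws} with $s=\lambda/a(\alpha)$ and invokes Lemma \ref{lem:unified-minimizer}. Your compactness-of-the-image step and the monotonicity of $s\mapsto\Phi_s(t\varphi_1)$ (from $F\ge 0$) make explicit the uniformity in $s$ that the paper's proof of Lemma \ref{lem:unified-minimizer}$(ii)$ calls ``clear''.
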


Let us deal now with
\begin{equation}\label{Pi}
P(\alpha):=g(u_\alpha), \quad \mbox{ for } \alpha\in D_{\lambda}.
\end{equation}
The following result is a direct consequence of the equality $u_\alpha = w_{\lambda/a(\alpha)}$:

\begin{lem}\label{l2}
Let $\alpha\in D_{\lambda}$. Then
\[
P(\alpha)=Q\!\left(\frac{\lambda}{a(\alpha)}\right).
\]
\end{lem}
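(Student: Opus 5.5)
The identity is essentially a matter of unpacking the definitions, so the plan is to verify three small facts in order.

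First, for $\alpha\in D_\lambda$ we have $a(\alpha)>0$, because $D_\lambda\subset(t',t'')$ and $a$ is positive there. Hence the quantity $s:=\lambda/a(\alpha)$ is a well-defined positive number. Moreover, the definition \eqref{def:Dilambda} of $D_\lambda$ says precisely that $\lambda_1/\beta<s<\lambda_1/\theta$. So $s$ lies in the domain of $Q$, and $Q(s)=g(w_s)$ makes sense.

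Second, I would identify $u_\alpha$ with $w_s$. Dividing the equation in \eqref{palpha} by $a(\alpha)>0$ shows that $u$ solves \eqref{palpha} if and only if $u$ solves \eqref{ws} with this $s$; the positivity and boundary conditions are the same in both problems. Lemma \ref{lem:unified-minimizer}(iii) gives that \eqref{ws} has a unique solution $w_s$ for $s\in(\lambda_1/\beta,\lambda_1/\theta)$. Proposition \ref{p1} gives that \eqref{palpha} has the unique solution $u_\alpha$. Uniqueness therefore forces $u_\alpha=w_{\lambda/a(\alpha)}$, which is also stated directly in Proposition \ref{p1}.

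Third, applying $g$ to both sides gives $P(\alpha)=g(u_\alpha)=g(w_{\lambda/a(\alpha)})=Q(\lambda/a(\alpha))$. This is well defined because $w_s\in C^1(\overline{\Omega})$ and vanishes on $\partial\Omega$, so it lies in $C^1_0(\overline{\Omega})$, the domain of $g$.

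None of these steps is a real obstacle. The only point needing care is checking that $\lambda/a(\alpha)$ lands in the open interval where $Q$ is defined, including the conventions $\lambda_1/0=\infty$ and $\lambda_1/\infty=0$. That is exactly what membership in $D_\lambda$ encodes.
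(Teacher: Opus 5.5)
Your proposal is correct and takes the same approach as the paper. The paper simply notes that the lemma follows directly from the identity $u_\alpha = w_{\lambda/a(\alpha)}$ of Proposition \ref{p1}. Your extra checks, that $a(\alpha)>0$ and that $\lambda/a(\alpha)$ lies in $(\lambda_1/\beta,\lambda_1/\theta)$, make explicit what the paper leaves implicit.
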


Let $I \subset D_{\lambda}$ be an open interval. We shall say that $I$ is of type (see Figures \ref{fig01}, \ref{fig02} and \ref{fig03}):

\begin{enumerate}
    \item $I_{\infty,\infty}$ if $a(\alpha)\to \frac{\theta}{\lambda_1}\lambda$ as $\alpha\to \inf I$ and $\alpha\to \sup I$.
     \item $I_{\infty,0}$ if  $a(\alpha)\to \frac{\theta}{\lambda_1}\lambda$ as $\alpha\to \inf I$ and $a(\alpha)\to \frac{\beta}{\lambda_1}\lambda$ as $\alpha\to \sup I$.
        \item $I_{0,\infty}$ if  $a(\alpha)\to \frac{\beta}{\lambda_1}\lambda$ as $\alpha\to \inf I$ and $a(\alpha)\to \frac{\theta}{\lambda_1}\lambda$ as $\alpha\to \sup I$.
          \item $I_{0,0}$ if $a(\alpha)\to \frac{\beta}{\lambda_1}\lambda$ as $\alpha\to \inf I$ and $\alpha\to \sup I$.
\end{enumerate}

	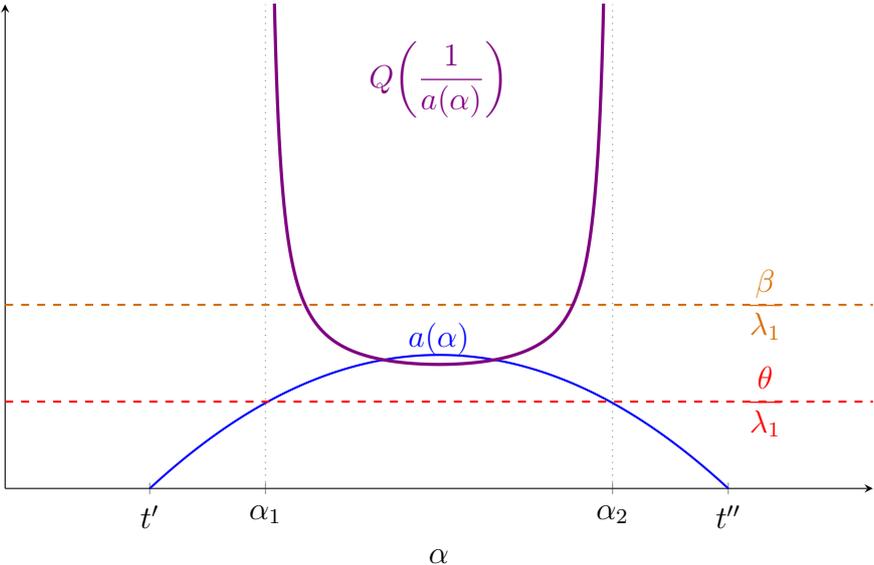
\begin{figure}[H]
		\centering
	\begin{tikzpicture}
		\begin{axis}[
			width=13cm,
			height=8cm,
			xmin=0, xmax=6,
			ymin=0, ymax=5.8,
			axis lines=left,
			xlabel={$\alpha$},
			xtick={1,1.8,4.2,5},
			xticklabels={$t'$, $\alpha_1$, $\alpha_2$, $t''$},
			ytick=\empty,
			clip=true,
			samples=400
			]
			
			\addplot[
			blue,
			thick,
			domain=1:5
			] {1.6*(x-1)*(5-x)/4};
			
			\addplot[red, dashed, thick] coordinates {(0,1.04) (6,1.04)};
			\addplot[orange!85!black, dashed, thick] coordinates {(0,2.2) (6,2.2)};
			
			\addplot[
			violet,
			very thick,
			domain=1.83:4.17
			] {1 + 0.7/((x-1.8)*(4.2-x))};
			
			\addplot[gray, dotted] coordinates {(1.8,0) (1.8,5.8)};
			\addplot[gray, dotted] coordinates {(4.2,0) (4.2,5.8)};
			
			\node[blue] at (axis cs:3,1.8) {$a(\alpha)$};
			\node[red, anchor=west] at (axis cs:5.05,1.04) {$\dfrac{\theta}{\lambda_1}$};
			\node[orange!85!black, anchor=west] at (axis cs:5.05,2.2) {$\dfrac{\beta}{\lambda_1}$};
			\node[violet] at (axis cs:3,4.9) {$Q\!\left(\dfrac{1}{a(\alpha)}\right)$};
			
		\end{axis}
	\end{tikzpicture}
	\caption{$(\alpha_1,\alpha_2)$ is of type $I_{\infty,\infty}$. Here $\lambda=1$ to simplify the figure.}
	\label{fig01}
	\end{figure}
	\begin{figure}[H]
	\centering
\begin{tikzpicture}
	\begin{axis}[
		width=14cm,
		height=8cm,
		xmin=0, xmax=6,
		ymin=0, ymax=6,
		axis lines=left,
		xlabel={$\alpha$},
		xtick={1,1.45,2.15,3.85,4.55,5},
		xticklabels={$t'$, $\alpha_1$, $\alpha_2$, $\alpha_3$, $\alpha_4$, $t''$},
		ytick=\empty,
		clip=true,
		samples=400
		]
		
		\addplot[
		blue,
		thick,
		domain=1:5
		] {2.8*(x-1)*(5-x)/4};
		
		\addplot[red, dashed, thick] coordinates {(0,1.12) (6,1.12)};
		\addplot[orange!85!black, dashed, thick] coordinates {(0,2.3) (6,2.3)};
		
		\addplot[
		violet,
		very thick,
		domain=1.48:2.12
		] {(2.15-x)/((x-1.45)+0.015)};
		
		\addplot[
		violet,
		very thick,
		domain=3.88:4.52
		] {(x-3.85)/((4.55-x)+0.015)};
		
		\addplot[gray, dotted] coordinates {(1.45,0) (1.45,6)};
		\addplot[gray, dotted] coordinates {(2.15,0) (2.15,6)};
		\addplot[gray, dotted] coordinates {(3.85,0) (3.85,6)};
		\addplot[gray, dotted] coordinates {(4.55,0) (4.55,6)};
		
		\node[blue] at (axis cs:3,2.9) {$a(\alpha)$};
		\node[red, anchor=west] at (axis cs:5.05,1.12) {$\dfrac{\theta}{\lambda_1}$};
		\node[orange!85!black, anchor=west] at (axis cs:5.05,2.3) {$\dfrac{\beta}{\lambda_1}$};
		\node[violet] at (axis cs:3,5.2) {$Q\!\left(\dfrac{1}{a(\alpha)}\right)$};
		
	\end{axis}
\end{tikzpicture}
\caption{$(\alpha_1,\alpha_2)$ is of type $I_{\infty,0}$, while $(\alpha_3,\alpha_4)$ is of type $I_{0,\infty}$.}
\label{fig02}
	\end{figure}
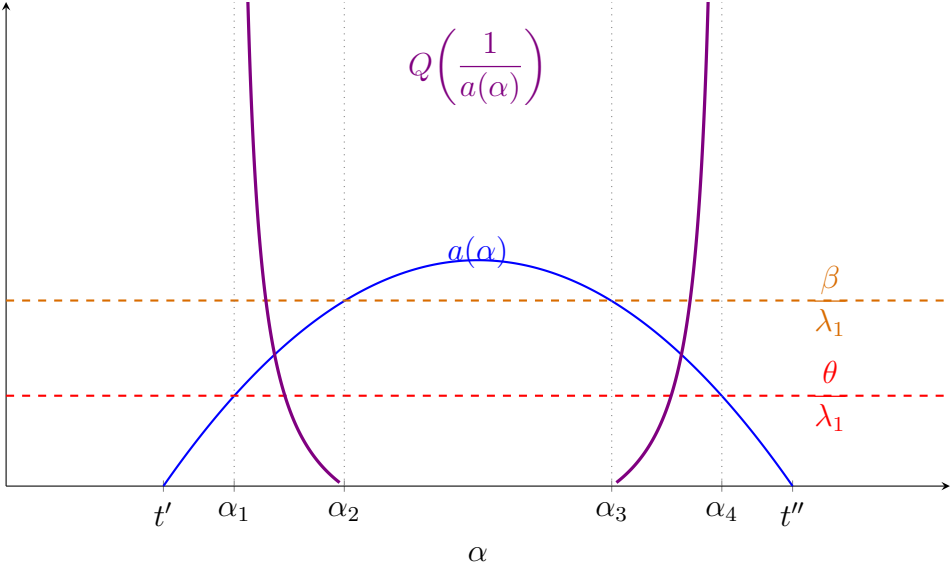
	
	\begin{figure}[H]
		\centering
		\begin{tikzpicture}
			\begin{axis}[
				width=15cm,
				height=8cm,
				xmin=0, xmax=10,
				ymin=0, ymax=6.4,
				axis lines=left,
				xlabel={$\alpha$},
				xtick={1,1.65,2.35,3.35,4.15,5.85,6.65,7.65,8.35,9},
				xticklabels={$t'$, $\alpha_1$, $\alpha_2$, $\alpha_3$, $\alpha_4$, $\alpha_5$, $\alpha_6$, $\alpha_7$, $\alpha_8$, $t''$},
				ytick=\empty,
				clip=true,
				samples=400
				]
				
				\addplot[red, dashed, thick] coordinates {(0,0.90) (10,0.90)};
				\addplot[orange!85!black, dashed, thick] coordinates {(0,1.90) (10,1.90)};
				
				\addplot[
				blue,
				very thick,
				smooth
				] coordinates {
					(1.00,0.00)
					(1.25,0.22)
					(1.45,0.55)
					(1.65,0.90)   
					(1.95,1.45)
					(2.15,1.6)
					(2.35,1.90)   
					(2.70,2.20)
					(3.00,2.30)
					(3.35,1.90)   
					(3.75,1.45)
					(4.15,1.90)   
					(5.00,2.30)
					(5.85,1.90)   
					(6.25,1.45)
					(6.65,1.90)   
					(7.00,2.30)
					(7.30,2.20)
					(7.65,1.90)   
					(7.85,1.6)
					(8.05,1.45)
					(8.35,0.90)   
					(8.55,0.55)
					(8.75,0.22)
					(9.00,0.00)
				};
				
				
				\addplot[
				violet,
				very thick,
				domain=1.68:2.32
				] {0.22*(2.35-x)/(x-1.65)};
				
				\addplot[
				violet,
				very thick,
				domain=3.36:4.14
				] {2.2*(x-3.35)*(4.15-x)};
				
				\addplot[
				violet,
				very thick,
				domain=5.86:6.64
				] {2.2*(x-5.85)*(6.65-x)};
				
				\addplot[
				violet,
				very thick,
				domain=7.68:8.32
				] {0.22*(x-7.65)/(8.35-x)};
				
				\addplot[gray, dotted] coordinates {(1.65,0) (1.65,6.4)};
				\addplot[gray, dotted] coordinates {(2.35,0) (2.35,6.4)};
				\addplot[gray, dotted] coordinates {(3.35,0) (3.35,6.4)};
				\addplot[gray, dotted] coordinates {(4.15,0) (4.15,6.4)};
				\addplot[gray, dotted] coordinates {(5.85,0) (5.85,6.4)};
				\addplot[gray, dotted] coordinates {(6.65,0) (6.65,6.4)};
				\addplot[gray, dotted] coordinates {(7.65,0) (7.65,6.4)};
				\addplot[gray, dotted] coordinates {(8.35,0) (8.35,6.4)};
				
				\node[blue] at (axis cs:5.0,3.0) {$a(\alpha)$};
				\node[red, anchor=west] at (axis cs:9.05,0.90) {$\dfrac{\theta}{\lambda_1}$};
				\node[orange!85!black, anchor=west] at (axis cs:9.05,1.90) {$\dfrac{\beta}{\lambda_1}$};
				\node[violet] at (axis cs:5.0,5.45) {$Q\!\left(\dfrac{1}{a(\alpha)}\right)$};
				
			\end{axis}
		\end{tikzpicture}
	\caption{$(\alpha_1,\alpha_2)$ is of type $I_{\infty,0}$, $(\alpha_3,\alpha_4)$ and $(\alpha_5,\alpha_6)$ are of type $I_{0,0}$, while $(\alpha_7,\alpha_8)$ is of type $I_{0,\infty}$.}
	\label{fig03}
	\end{figure}
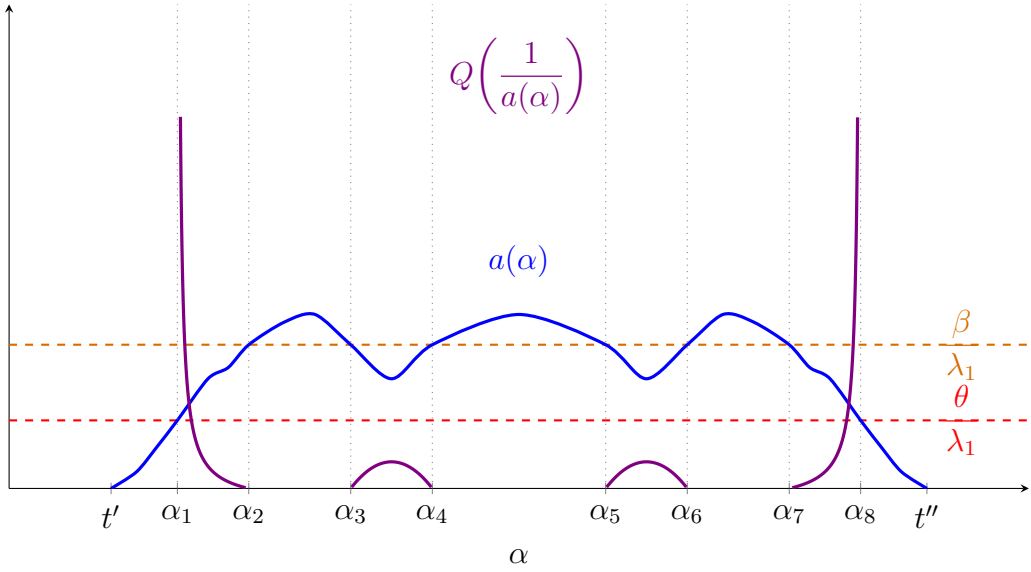

\begin{lem}\label{l3}
 Let $I\subset D_{\lambda}$ be a maximal open interval. Then $I$ is of type $I_{\infty,\infty}$, $I_{\infty,0}$, $I_{0,\infty}$ or $I_{0,0}$. Moreover  $P:D_{\lambda}\to(0,\infty)$ is continuous and
\[
\lim_{\alpha\to \inf I}P(\alpha)=\lim_{\alpha\to \sup I}P(\alpha)=+\infty,\ \ \mbox{if}\ \ I \mbox{ is of type }I_{\infty,\infty},
\]
\[
\lim_{\alpha\to \inf I}P(\alpha)=+\infty,
\qquad
\lim_{\alpha\to \sup I}P(\alpha)=0, \ \ \mbox{if}\ \ I \mbox{ is of type } I_{\infty,0},
\]
\[
\lim_{\alpha\to \inf I}P(\alpha)=0,
\qquad
\lim_{\alpha\to \sup I}P(\alpha)=+\infty,  \ \ \mbox{if}\ \ I\mbox{ is of type } I_{0,\infty},
\]
\[
\lim_{\alpha\to \inf I}P(\alpha)=\lim_{\alpha\to \sup I}P(\alpha)=0,\ \ \mbox{if}\ \ I\mbox{ is of type } I_{0,0}.
\]

Furthermore,

\begin{enumerate}
    \item[$(i)$] If $I$ is of type $I_{\infty,0}$  then there exists $J \subset D_{\lambda}$ of type $I_{0,\infty}$ such that $\sup I\leq \inf J$ (see Figure \ref{fig02}).
    \item[$(ii)$] If $I$ is of type $I_{0,0}$, then there exists $J_1\subset D_{\lambda}$ of type $I_{\infty,0}$. In particular,  there exists $J_2\subset D_\lambda$ of type $I_{0,\infty}$ such that $\sup J_1\leq \inf J_2$  (see Figure \ref{fig03}).
\end{enumerate}
\end{lem}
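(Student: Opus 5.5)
The plan is to read off the behaviour of $a$ at the endpoints of a maximal interval $I=(\alpha_-,\alpha_+)$ of $D_\lambda$. We then transfer it to $P$ through Lemma \ref{l2} and the boundary behaviour of $Q$ from Remark \ref{rem1}$(i)$.

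First, the endpoint values of $a$. Since $a$ is continuous and $D_\lambda$ is defined by the strict inequalities $\frac{\theta}{\lambda_1}\lambda<a(\alpha)<\frac{\beta}{\lambda_1}\lambda$ on the open interval $(t',t'')$, the set $D_\lambda$ is open. Let $\alpha_\pm\in[t',t'']$ be the endpoints of $I$. By maximality, $\alpha_\pm\notin D_\lambda$.
\begin{itemize}
\item If $\alpha_\pm\in\{t',t''\}$, then $a(\alpha_\pm)=0$. This forces $\theta=0$, because $a>\frac{\theta}{\lambda_1}\lambda$ on $I$ and $a$ is continuous. So $a(\alpha_\pm)=0=\frac{\theta}{\lambda_1}\lambda$.
\item Otherwise $\alpha_\pm\in(t',t'')$, and continuity gives $a(\alpha_\pm)\in\{\frac{\theta}{\lambda_1}\lambda,\frac{\beta}{\lambda_1}\lambda\}$.
\end{itemize}
When $\beta=\infty$ the upper bound is never reached, which is consistent with the conventions. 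In every case $a(\alpha)$ tends to one of the two thresholds at each endpoint. This gives the four types.

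Next, continuity of $P$ and the limits. By Lemma \ref{l2}, $P(\alpha)=Q(\lambda/a(\alpha))$ on $I$. Here $\alpha\mapsto\lambda/a(\alpha)$ is continuous into $(\lambda_1/\beta,\lambda_1/\theta)$, and $Q$ is continuous by Lemma \ref{l1}$(ii)$, so $P$ is continuous. If $a(\alpha)\to\frac{\theta}{\lambda_1}\lambda$, then $\lambda/a(\alpha)\to(\lambda_1/\theta)^-$, interpreted as $+\infty$ when $\theta=0$. Remark \ref{rem1}$(i)$ then gives $P\to\infty$. If instead $a(\alpha)\to\frac{\beta}{\lambda_1}\lambda$, then $\lambda/a(\alpha)\to(\lambda_1/\beta)^+$ and $P\to0$. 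This yields all four limit statements.

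For items $(i)$ and $(ii)$, the tool is the intermediate value theorem on $a$, using $a(t')=a(t'')=0\le\frac{\theta}{\lambda_1}\lambda$.

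For $(i)$, let $I$ be of type $I_{\infty,0}$. Then $a(\sup I)=\frac{\beta}{\lambda_1}\lambda$ with $\beta<\infty$, and $a(t'')=0<\frac{\beta}{\lambda_1}\lambda$.
\begin{itemize}
\item Define $\gamma_1:=\inf\{\alpha>\sup I: a(\alpha)<\frac{\beta}{\lambda_1}\lambda\}$, with the intent that $a(\alpha)<\frac{\beta}{\lambda_1}\lambda$ just to the right of $\gamma_1$.
\item Define $\gamma_2:=\inf\{\alpha>\gamma_1: a(\alpha)\le\frac{\theta}{\lambda_1}\lambda\}$, which satisfies $\gamma_2\le t''$.
\end{itemize}
The aim is to locate a component of $D_\lambda$ whose right end is at $\gamma_2$ with $a(\gamma_2)=\frac{\theta}{\lambda_1}\lambda$. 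One must then check that its left end value is $\frac{\beta}{\lambda_1}\lambda$ rather than $\frac{\theta}{\lambda_1}\lambda$. A cleaner route is a last-exit argument: take $\gamma^*:=\sup\{\alpha\in[\sup I,t'']: a(\alpha)\ge\frac{\beta}{\lambda_1}\lambda\}$ and then the first point after $\gamma^*$ where $a=\frac{\theta}{\lambda_1}\lambda$.
\begin{itemize}
\item On the open interval between these two points, $\frac{\theta}{\lambda_1}\lambda<a<\frac{\beta}{\lambda_1}\lambda$.
\item It is a maximal component, with limits $\frac{\beta}{\lambda_1}\lambda$ on the left and $\frac{\theta}{\lambda_1}\lambda$ on the right.
\item So it is a $J$ of type $I_{0,\infty}$ with $\sup I\le\gamma^*=\inf J$.
\end{itemize}

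For $(ii)$, let $I$ be of type $I_{0,0}$, so again $\beta<\infty$ and $a$ reaches $\frac{\beta}{\lambda_1}\lambda$. A symmetric last-exit argument works from the left. Take the last point before $\inf I$ where $a=\frac{\theta}{\lambda_1}\lambda$; it exists because $a(t')=0$. Then take the first subsequent point where $a=\frac{\beta}{\lambda_1}\lambda$. The interval between them is a component $J_1$ of type $I_{\infty,0}$. Applying $(i)$ to $J_1$ produces $J_2$ of type $I_{0,\infty}$ with $\sup J_1\le\inf J_2$.

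The main obstacle is the careful bookkeeping in these sup/inf constructions. The chosen interval must be exactly a maximal component and have the correct endpoint values. Degenerate cases also need attention: $\theta=0$, where the endpoint may be $t'$ or $t''$ itself, and $a$ touching a threshold without crossing it. Both are handled by defining the points via first and last attainment of the threshold values.
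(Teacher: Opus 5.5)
Your proposal is correct and follows essentially the same route as the paper: you classify the endpoints of $I$ by continuity of $a$ and maximality, you get the limits of $P$ from Lemma \ref{l2} and Remark \ref{rem1}$(i)$, and your last-exit construction for $(i)$ coincides with the paper's choice of $\sup\{\alpha\in[\sup I,t'']: a(\alpha)=\frac{\beta}{\lambda_1}\lambda\}$ followed by the first point where $a=\frac{\theta}{\lambda_1}\lambda$ (the tentative $\gamma_1,\gamma_2$ detour is not needed). In $(ii)$ you take the two points in the reverse order from the paper: the last $\frac{\theta}{\lambda_1}\lambda$-level point before $\inf I$ and then the first $\frac{\beta}{\lambda_1}\lambda$-level point after it, whereas the paper takes the first $\frac{\beta}{\lambda_1}\lambda$-level point from $t'$ and then the last $\frac{\theta}{\lambda_1}\lambda$-level point before it; both choices correctly produce a component of type $I_{\infty,0}$.
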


\begin{proof}
Since $a$ is continuous and $a(\alpha)>0$ on $(t',t'')$, the map
$\alpha\mapsto \lambda/a(\alpha)$ is continuous on $D_{\lambda}$. By the definition of $D_{\lambda}$ and since $I$ is maximal, it is also clear that $I$ is of one of these four types. The limits of $P$ at $\inf I$ and $\sup I$ come from Lemmas \ref{l1} and \ref{l2}, and Remark \ref{rem1}.
\vskip.3cm
Now we prove $(i)$: if $I$ is of type $I_{\infty,0}$, then it is clear that  $\alpha(\sup I)= \frac{\beta}{\lambda_1}\lambda$ and $\sup I<t''$ (since $a$ is close to zero near $t''$). Let $\alpha_0=\sup\{ \alpha\in[\sup I,t'']:\  a(\alpha)= \frac{\beta}{\lambda_1}\lambda\}$. Since $a$ is close to zero near $t''$ it follows that $\alpha_0<t''$. Moreover, if $\alpha\in (\alpha_0,t'']$, then $a(\alpha)< \frac{\beta}{\lambda_1}\lambda$. Now let $\alpha_1=\inf\{ \alpha\in[\alpha_0,t'']:\  a(\alpha)= \frac{\theta}{\lambda_1}\lambda\}$. Then it is clear that $\alpha_0<\alpha_1\leq t''$ and the interval $J:=(\alpha_0,\alpha_1)$ is of type $I_{0,\infty}$. Also by definition $\sup I\leq \inf J$.
\vskip.3cm
$(ii)$: If $I$ if of type $I_{0,0}$, then $a(\inf I)=a(\sup I)=\frac{\beta}{\lambda_1}\lambda$. Let $\alpha_1=\inf\{ \alpha\in[t',\inf I]:\  a(\alpha)= \frac{\beta}{\lambda_1}\lambda\}$. Since $a$ is close to zero near $t'$ it follows that $\alpha_1>t'$. Now define
$\alpha_0=\sup\{ \alpha\in[t',\alpha_1]:\  a(\alpha)= \frac{\theta}{\lambda_1}\lambda\}$, then it is clear that $\alpha_0<\alpha_1$ and the interval $J_1=(\alpha_0,\alpha_1)$ is of type $I_{\infty,0}$. To conclude we apply item $(i)$.

\end{proof}

\section{Existence of Fixed Points of $P$}
\subsection{The General Case} \label{secfg}
In this section we study the existence of fixed points of $P$. Recall that
\[
P(\alpha)=Q\!\left(\frac{\lambda}{a(\alpha)}\right), \alpha\in D_\lambda.
\]

First note that, if $a(\alpha)= (\beta/\lambda_1)\lambda$, we can define $P(\alpha)=0$, so that, by Lemma \ref{l3}, $P$ is continuous  in the extended set
$$\widetilde {D}_\lambda=D_\lambda\cup\{\alpha\in \overline{D_\lambda}: \ a(\alpha)=(\beta/\lambda_1)\lambda\}.$$
By Remark \ref{defDlbd} we have that $\widetilde{D}_\lambda\neq \emptyset$ if, and only if, $\lambda\in (0, \frac{\lambda_1}{\theta}A)$. Note that if $\beta=\infty$ then $\widetilde{D}_\lambda=D_\lambda$.  Moreover $P$ is coercive in $\widetilde {D}_\lambda$ in the sense that, if there exists a sequence $\alpha_n\in D_\lambda$ such that $\alpha_n\to \alpha$ and $a(\alpha)=(\theta/\lambda_1)\lambda$, then Lemma \ref{l3} yields $P(\alpha_n)\to \infty$. So we have proved the following:
\begin{lem}\label{coerext} $P:\widetilde {D}_\lambda\to [0,\infty)$ is continuous and coercive.
\end{lem}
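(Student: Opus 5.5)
The lemma asserts two things about $P$ on $\widetilde D_\lambda$: continuity and coercivity. I would treat them separately, each time reducing to Lemma \ref{l3} together with the representation $P(\alpha)=Q(\lambda/a(\alpha))$ from Lemma \ref{l2}.

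\emph{Continuity.} On the open set $D_\lambda$, $P$ is the composition of the continuous map $\alpha\mapsto\lambda/a(\alpha)$ (continuous because $a>0$ on $(t',t'')$) with $Q$, which is continuous by Lemma \ref{l1}(ii). So the only issue is at the added points $\alpha_*\in\overline{D_\lambda}$ with $a(\alpha_*)=(\beta/\lambda_1)\lambda$, where $P(\alpha_*):=0$. These points can only occur when $\beta<\infty$.

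Take such an $\alpha_*$ and a sequence $\alpha_n\to\alpha_*$ with $\alpha_n\in\widetilde D_\lambda$. Terms with $\alpha_n$ itself an added point contribute $P(\alpha_n)=0$, so we may assume $\alpha_n\in D_\lambda$. Continuity of $a$ gives
\[
s_n:=\frac{\lambda}{a(\alpha_n)}\to\frac{\lambda_1}{\beta}, \qquad s_n\in\Big(\frac{\lambda_1}{\beta},\frac{\lambda_1}{\theta}\Big),
\]
so $s_n\to(\lambda_1/\beta)^+$. By Lemma \ref{l1}(iii), $w_{s_n}\to0$ in $C^1(\overline\Omega)$. Since $g$ is continuous on $C^1_0(\overline\Omega)$ and $g(0)=0$ (see also Remark \ref{rem1}(i)), we get $P(\alpha_n)=g(w_{s_n})\to0=P(\alpha_*)$.

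This argument does not require $\alpha_n$ to stay in one maximal interval of Lemma \ref{l3}: it uses only $a(\alpha_n)\to(\beta/\lambda_1)\lambda$ from inside $D_\lambda$. That uniformity is the one delicate point, because $\alpha_*$ may be a common endpoint of two adjacent intervals, for instance an $I_{\infty,0}$ and an $I_{0,\infty}$ meeting at the same $\alpha_*$.

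\emph{Coercivity.} Let $\alpha_n\in D_\lambda$ with $\alpha_n\to\alpha$ and $a(\alpha)=(\theta/\lambda_1)\lambda$.

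If $\theta=0$, this would force $a(\alpha)=0$, so $\alpha\in\{t',t''\}$ and $s_n=\lambda/a(\alpha_n)\to\infty=\lambda_1/\theta$. If $\theta>0$, then $a(\alpha_n)\to(\theta/\lambda_1)\lambda$ from above, because $\alpha_n\in D_\lambda$; hence $s_n\to(\lambda_1/\theta)^-$. In both cases Remark \ref{rem1}(i) gives $Q(s)\to\infty$ as $s\to(\lambda_1/\theta)^-$. This uses \eqref{g1}, or \eqref{g2} together with \eqref{f2} via Lemma \ref{l1}(iv). Therefore $P(\alpha_n)=Q(s_n)\to\infty$.

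Again the argument is sequential and so is independent of the interval types in Lemma \ref{l3}. The only mild obstacle is checking that the one-sided limit of $Q$ applies, which is exactly the constraint $s_n<\lambda_1/\theta$ built into the definition of $D_\lambda$.
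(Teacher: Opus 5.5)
Your proof is correct and follows the paper's route. The paper gets continuity and coercivity from the endpoint limits of Lemma \ref{l3}, and those limits rest on Lemma \ref{l2}, Lemma \ref{l1}(iii) and Remark \ref{rem1}(i), which are exactly the facts you use. Arguing along arbitrary sequences with $a(\alpha_n)\to(\beta/\lambda_1)\lambda$ or $a(\alpha_n)\to(\theta/\lambda_1)\lambda$, rather than interval by interval, is a small improvement: it also covers points where two maximal intervals meet, and points where infinitely many intervals of $D_\lambda$ accumulate, which the per-interval limits of Lemma \ref{l3} do not address directly.
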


Write $T(\lambda,\alpha)=P(\alpha)-\alpha=Q\!\left(\frac{\lambda}{a(\alpha)}\right)-\alpha$, for $\lambda>0$ and $\alpha\in \tilde {D}_\lambda$. By Lemma \ref{coerext} we have that
\begin{equation}\label{defc}
c(\lambda):=\inf\{T(\lambda,\alpha), \alpha \in \widetilde {D}_\lambda\}
\end{equation}
is attained at some $\alpha_\lambda\in \tilde {D}_\lambda$.

\begin{lem} \label{clbd}The map $c$, given by \eqref{defc},  is continuous in $(0,A\lambda_1/\theta)$. Moreover
\begin{equation*}
\lim_{\lambda\to 0^+}c(\lambda)<0, \quad \mbox{and} \quad  \lim_{\lambda\to \left(A\lambda_1/\theta\right)^-}c(\lambda)=\infty.
\end{equation*}
\end{lem}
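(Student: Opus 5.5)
The plan is to treat the two parts separately: continuity of $c$ via upper and lower semicontinuity, and the two limits via explicit test points and uniform divergence of $Q$. Throughout, $Q$ is extended by $Q(\lambda_1/\beta):=0$. This extension is continuous on $[\lambda_1/\beta,\lambda_1/\theta)$ by Lemma \ref{l1}(ii)--(iii), the continuity of $g$ and $g(0)=0$, and it satisfies $Q(s)\to\infty$ as $s\to(\lambda_1/\theta)^-$ (Remark \ref{rem1}). With this extension, $T(\lambda,\alpha)=Q(\lambda/a(\alpha))-\alpha$ is jointly continuous on $\{(\lambda,\alpha):\alpha\in\widetilde D_\lambda\}$.

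\emph{Upper semicontinuity at $\lambda_0$.} Let $\alpha_0\in\widetilde D_{\lambda_0}$ be a minimizer, which exists by Lemma \ref{coerext}.
\begin{itemize}
\item If $\alpha_0\in D_{\lambda_0}$, the strict inequalities $\theta\lambda/\lambda_1<a(\alpha_0)<\beta\lambda/\lambda_1$ persist for $\lambda$ near $\lambda_0$. Hence $c(\lambda)\le T(\lambda,\alpha_0)\to c(\lambda_0)$.
\item If instead $a(\alpha_0)=\beta\lambda_0/\lambda_1$ (only possible when $\beta<\infty$), then $\alpha_0\in\overline{D_{\lambda_0}}$. So there are points $\alpha$ arbitrarily close to $\alpha_0$ with $a(\alpha)$ slightly below $\beta\lambda_0/\lambda_1$. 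For $\lambda$ close to $\lambda_0$, continuity of $a$ and the intermediate value theorem give $\alpha_\lambda\to\alpha_0$ with $\lambda/a(\alpha_\lambda)\to(\lambda_1/\beta)^+$. Then $T(\lambda,\alpha_\lambda)\to 0-\alpha_0=c(\lambda_0)$.
\end{itemize}
In both cases $\limsup_{\lambda\to\lambda_0}c(\lambda)\le c(\lambda_0)$. This boundary case, where the minimizer sits on the extended part of $\widetilde D_\lambda$, is the step I expect to need the most care.

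\emph{Lower semicontinuity.} Take $\lambda_n\to\lambda_0$ and minimizers $\alpha_n\in\widetilde D_{\lambda_n}$; passing to a subsequence, $\alpha_n\to\bar\alpha\in[t',t'']$. By upper semicontinuity and $\alpha_n\le t''$, the values $Q(s_n)$ with $s_n:=\lambda_n/a(\alpha_n)$ are bounded. Since $Q\to\infty$ at $\lambda_1/\theta$ and $s_n\ge\lambda_1/\beta$, the $s_n$ stay in a compact subset of $[\lambda_1/\beta,\lambda_1/\theta)$. In particular $a(\bar\alpha)>0$, $s_n\to\bar s=\lambda_0/a(\bar\alpha)$, and hence $\bar\alpha\in\widetilde D_{\lambda_0}$. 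Continuity of the extended $Q$ then gives $c(\lambda_n)=T(\lambda_n,\alpha_n)\to T(\lambda_0,\bar\alpha)\ge c(\lambda_0)$. Together with upper semicontinuity, this proves continuity of $c$.

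\emph{Limit as $\lambda\to0^+$.} By Lemma \ref{l1}(iii), fix $s_0\in(\lambda_1/\beta,\lambda_1/\theta)$ with $Q(s_0)<t''/4$. Fix also $\delta>0$ small and $\alpha^*\in(t',t''-\delta)$, and let $m=\min_{[\alpha^*,t''-\delta]}a>0$. For $\lambda<s_0\min\{m,A\}$, choose an $\alpha$ with $a(\alpha)=A$ and set $\alpha_\lambda:=\sup\{\alpha<t'':a(\alpha)=\lambda/s_0\}$. By the intermediate value theorem and the choice of $m$, $\alpha_\lambda\in(t''-\delta,t'')$ and $\alpha_\lambda\in D_\lambda$. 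Therefore $c(\lambda)\le Q(s_0)-\alpha_\lambda<t''/4-(t''-\delta)<-t''/2$ once $\delta<t''/4$. This gives $\limsup_{\lambda\to0^+}c(\lambda)<0$.

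\emph{Limit as $\lambda\to(A\lambda_1/\theta)^-$.} For $\alpha\in\widetilde D_\lambda$ we have $s=\lambda/a(\alpha)\ge\lambda/A$.
\begin{itemize}
\item If $\theta>0$, then $s\in[\lambda/A,\lambda_1/\theta)$ and $\lambda/A\to\lambda_1/\theta$.
\item If $\theta=0$, then $A\lambda_1/\theta=\infty$ and $\lambda/A\to\infty=\lambda_1/\theta$.
\end{itemize}
In either case $\inf_{s\ge\lambda/A}Q(s)\to\infty$ because $Q(s)\to\infty$ as $s\to\lambda_1/\theta$. Since $\alpha\le t''$, this yields $c(\lambda)\ge\inf_{s\ge\lambda/A}Q(s)-t''\to\infty$.
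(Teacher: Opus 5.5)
Your proof is correct and follows essentially the same route as the paper. The shared steps are: upper semicontinuity via admissible points approximating a minimizer, lower semicontinuity via limits of minimizers, the limit at $0^+$ via points near $t''$ where $P$ is small, and the limit at the right endpoint via $\lambda/a(\alpha)\ge\lambda/A$. You are in fact more careful than the paper in three places: the boundary case $a(\alpha_0)=\beta\lambda_0/\lambda_1$, the bound on $Q(s_n)$ that gives $a(\bar\alpha)>0$ and $\bar s<\lambda_1/\theta$, and the case $\theta=0$.

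The one point both you and the paper pass over is the claim $\bar\alpha\in\widetilde D_{\lambda_0}$ when $a(\bar\alpha)=\beta\lambda_0/\lambda_1$. If $a\ge\beta\lambda_0/\lambda_1$ near $\bar\alpha$, then $\bar\alpha\notin\overline{D_{\lambda_0}}$. This is harmless: let $\alpha^\#$ be the largest point of $[\bar\alpha,t'')$ with $a(\alpha^\#)=\beta\lambda_0/\lambda_1$. Then $\alpha^\#\in\widetilde D_{\lambda_0}$, so $T(\lambda_0,\bar\alpha)=-\bar\alpha\ge-\alpha^\#\ge c(\lambda_0)$.
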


\begin{proof}
Let $\mu_n\to\lambda_0 \in (0,A\lambda_1/\theta)$ and $\alpha_0\in \widetilde D_{\lambda_0}$ be such that
\[
c(\lambda_0)=T(\lambda_0,\alpha_0),
\]

By continuity  it is clear that there exists a sequence $\alpha_n\to\alpha_0$ with $\alpha_n\in \widetilde D_{\mu_n}$. Hence, since $T$ is continuous we have
\[
T(\mu_n,\alpha_n)\to T(\lambda_0,\alpha_0)=c(\lambda_0).
\]
From $c(\mu_n)\le T(\mu_n,\alpha_n)$, we obtain that
$\limsup_{n\to\infty} c(\mu_n)\le c(\lambda_0)$.

Take now
$\beta_n\in \widetilde D_{\mu_n}$  such that
$c(\mu_n)=T(\mu_n,\beta_n)$.
Since $[t',t'']$ is compact, there exists a subsequence (still denoted by $\beta_n$) and some $\beta\in X$ such that
$\beta_n\to \beta \quad \text{in } [t',t'']$.
Clearly, we have $\beta\in \widetilde D_{\lambda_0}$. Therefore, using again the continuity of $T$, we find that
\[
\liminf_{n\to\infty} c(\mu_n)
=\liminf_{n\to\infty} T(\mu_n,\beta_n)
= T(\lambda_0,\beta)\ge  c(\lambda_0).
\]
This yields $c(\mu_n) \to c(\lambda_0)$, as desired.
Thus $c$ is continuous.

Now let $\mu_n\to 0^+$. Since $a$ is continuous and $a(t')=a(t'')=0$ we can find $\alpha_n\in \widetilde D_{\mu_n}$. Moreover we can assume that $\lambda_1/\beta<\mu_n/a(\alpha_n)<\lambda_1/\beta+1/n$ and $t''-1/n<\alpha_n<t''$, for $n\in\mathbb{N}$, i.e. $\alpha_n \to t''$ and $\mu_n/a(\alpha_n) \to \lambda_1/\beta$. Therefore, by Lemma \ref{l3} we conclude that
\begin{equation*}
\limsup_{n\to \infty}c(\mu_n)\le   \lim_{n\to \infty}T(\mu_n,\alpha_n)=    \lim_{n\to\infty}\left[Q\!\left(\frac{\mu_n}{a(\alpha_n)}\right)-\alpha_n\right]=-t''<0.
\end{equation*}

Now suppose that $\mu_n\to  \left(A\lambda_1/\theta\right)^-$. If $\theta>0$, let $\alpha_0$ be such that $a(\alpha_0)=A$. Then, it is clear that $\alpha_0\in \widetilde D_{\mu_n}$ and $a(\alpha_0)/\mu_n=A/\mu_n\to \theta/\lambda_1$. Moreover, if $\alpha_n\in \widetilde{D}_{\mu_n}$ satisfies $c(\mu_n)=T(\mu_n,\alpha_n)$, then
\begin{equation*}
\frac{\mu_n}{A}\le\frac{\mu_n}{a(\alpha_n)}<\frac{\lambda_1}{\theta,}
\end{equation*}
which implies, by Lemma \ref{l2}, that
\begin{equation*}
\liminf_{n\to \infty}c(\mu_n)=  \lim_{n\to \infty}T(\mu_n,\alpha_n)=    \lim_{n\to\infty}\left[Q\!\left(\frac{\mu_n}{a(\alpha_n)}\right)-\alpha_n\right]=\infty.
\end{equation*}
\end{proof}

Next we set
\begin{equation}\label{lbdtilde}
\lambda_0=\min\{\lambda \in (0,A\lambda_1/\theta): c(\lambda)=0\}\ \ \ \mbox{and}\ \ \  \widetilde\lambda_0=\max\{\lambda \in (0,A\lambda_1/\theta): c(\lambda)=0\}.
\end{equation}

It follows from Lemma \ref{clbd} that $0<\lambda_0<\widetilde\lambda_0<(A\lambda_1)/\theta$, $c(\lambda)<0$ for $\lambda<\lambda_0$ and $c(\lambda)>0$ for $\lambda>\widetilde \lambda_0$.
\begin{prop} \label{prop0}\strut
\begin{enumerate}
\item[$(i)$] If $0<\lambda<\lambda_{0}$ then  $P$ has at least two fixed points $\alpha_{1},\alpha_{2}$ such that $t'<\alpha_{1}<\alpha_{2}<t''$.

\item[$(ii)$] If $\lambda>\widetilde\lambda_{0}$ then $P$ has no fixed points in $(t',t'')$.
\end{enumerate}

\end{prop}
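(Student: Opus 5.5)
Both parts reduce to studying the sign of the continuous function $T(\lambda,\cdot)=P-\mathrm{id}$ on $\widetilde D_\lambda$ and applying the intermediate value theorem on suitable subintervals. The input is Lemma \ref{l3} together with $c(\lambda)<0$ for $\lambda<\lambda_0$ and $c(\lambda)>0$ for $\lambda>\widetilde\lambda_0$, as recorded after \eqref{lbdtilde}.

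\emph{Part (ii).} Let $\lambda>\widetilde\lambda_0$. Fixed points of $P$ in $(t',t'')$ can only lie in $D_\lambda$, since $P$ is defined only there (and extended by $0$ on $\widetilde D_\lambda\setminus D_\lambda$, where it cannot equal $\alpha>t'\ge 0$). If $\lambda\ge A\lambda_1/\theta$, then $D_\lambda=\emptyset$ by Remark \ref{defDlbd}, so there is nothing to prove. Otherwise, for every $\alpha\in D_\lambda$,
\[
T(\lambda,\alpha)\ge c(\lambda)>0,
\]
so $P(\alpha)\neq\alpha$.

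\emph{Part (i).} Let $0<\lambda<\lambda_0$, so $c(\lambda)<0$. By Lemma \ref{coerext} the infimum is attained at some $\alpha_\lambda\in\widetilde D_\lambda$ with $P(\alpha_\lambda)<\alpha_\lambda$. I would build two intervals flanking points where $T<0$, with $T>0$ toward their outer endpoints.

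\textbf{Step 1: locate a negative point inside $D_\lambda$.} If $\alpha_\lambda\notin D_\lambda$, then $P(\alpha_\lambda)=0$ and $\alpha_\lambda$ is an endpoint of some maximal interval of $D_\lambda$ where $a=\beta\lambda/\lambda_1$. By continuity, points of $D_\lambda$ near $\alpha_\lambda$ still satisfy $T<0$. So there is a maximal interval $I\subset D_\lambda$ containing a point $\bar\alpha$ with $T(\lambda,\bar\alpha)<0$.

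\textbf{Step 2: the pure-type case.} If $I$ is of type $I_{\infty,\infty}$, then $P\to\infty$ at both ends of $I$ while $\alpha$ stays bounded by $t''$. Hence $T>0$ near both ends and $T(\lambda,\bar\alpha)<0$, and the intermediate value theorem gives one fixed point in $(\inf I,\bar\alpha)$ and another in $(\bar\alpha,\sup I)$.

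\textbf{Step 3: the remaining types.} If $I$ has a ``$0$'' end, I would use Lemma \ref{l3}(i)--(ii) to produce an interval $J_1$ of type $I_{\infty,0}$ and an interval $J_2$ of type $I_{0,\infty}$ with $\sup J_1\le\inf J_2$. On $J_1$, $T\to+\infty$ at $\inf J_1$ and $T\to -\sup J_1<0$ at $\sup J_1$, because $P\to0$ and $\sup J_1>t'\ge0$. This gives a fixed point $\alpha_1\in J_1$. Symmetrically, on $J_2$, $T\to-\inf J_2<0$ at $\inf J_2$ and $T\to+\infty$ at $\sup J_2$, giving $\alpha_2\in J_2$. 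Since $J_1,J_2$ are disjoint with $\sup J_1\le \inf J_2$, we get $\alpha_1<\alpha_2$.

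Note that in Step 3 the negative point is not really needed: a $0$-end automatically forces $T<0$ there, since $\alpha>0$. In fact, whenever $D_\lambda$ has any interval with a $0$-end, two fixed points exist for every admissible $\lambda$.

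\textbf{Main obstacle.} The main difficulty is the case analysis. One has to ensure that each maximal interval is of one of the four types, which means ruling out $D_\lambda$ reaching $t'$ or $t''$. This holds because $a(t')=a(t'')=0$ and $a<\theta\lambda/\lambda_1$ near the zeros when $\theta>0$; when $\theta=0$, $a$ tends to $0=\theta\lambda/\lambda_1$ there, so the end is of ``$\infty$'' type. The ordering $\alpha_1<\alpha_2$ should be stated carefully. When $\theta=0$ and $D_\lambda$ touches $t'$ or $t''$, the coercivity from Lemma \ref{l3} still applies, because $a\to0=\theta\lambda/\lambda_1$ at those endpoints.
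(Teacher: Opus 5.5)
Your proposal is correct and is essentially the paper's proof. Part (ii) is the same one-line use of $c(\lambda)>0$, and part (i) uses the same dichotomy: an interval with a $0$-end yields one fixed point on each of the intervals $J_1$ (type $I_{\infty,0}$) and $J_2$ (type $I_{0,\infty}$) from Lemma \ref{l3}, while in the pure $I_{\infty,\infty}$ case the intermediate value theorem is applied on both sides of a point where $P(\alpha)<\alpha$; you simply organize the split around the interval containing the negative point.

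The one detail worth writing out is the case where that interval is of type $I_{0,\infty}$, which Lemma \ref{l3}(i)--(ii) does not literally cover. There, the mirror image of the construction in the proof of Lemma \ref{l3}(ii) supplies the needed interval $J_1$ of type $I_{\infty,0}$ to its left. The paper passes over this point as well.
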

\begin{proof} $(i)$:  It is clear, by Lemma \ref{l3}, that if $D_\lambda$ contains a maximal open interval of type $I_{\infty,0}$ or $I_{0,\infty}$ or $I_{0,0}$, then $P$ has at least two fixed points in $D_\lambda$ (see Figure \ref{fig04}). Thus, let us assume that the only maximal open intervals of $D_\lambda$ are of type $I_{\infty,\infty}$. Since $c(\lambda)<0$, there exists $\alpha_0\in \widetilde D_\lambda=D_\lambda$ such that $T(\lambda,\alpha_0)<0$. Thus $P(\alpha_0)<\alpha_0$. On the other hand we have, by Lemma \ref{l3},  that if $I \subset D_\lambda$ is a maximal interval of type $I_{\infty,\infty}$ then $P(\alpha)\to \infty$ as $\alpha\to \inf I $ and $\alpha\to \sup I $. Then clearly $P$ has at least two fixed points in $D_\lambda$ (see Figure \ref{fig05}).

$(ii)$: If $\lambda>\widetilde\lambda_{0}$, then $0<c(\lambda)\le T(\lambda,
\alpha)$ for all $\alpha\in \widetilde D_\lambda$, which implies that $P(\alpha)>\alpha$ for all $\alpha\in \widetilde D_\lambda$, so $P$ has no fixed points. Of course, if $D_\lambda=\emptyset$ the proof is trivial.
\end{proof}

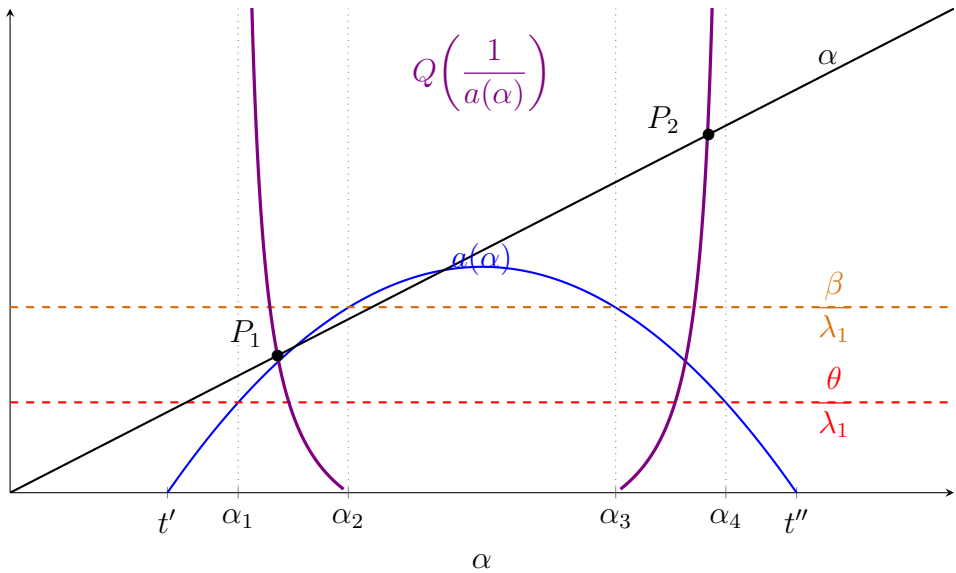
\begin{figure}[H]
	\centering
	\begin{tikzpicture}
		\begin{axis}[
			width=14cm,
			height=8cm,
			xmin=0, xmax=6,
			ymin=0, ymax=6,
			axis lines=left,
			xlabel={$\alpha$},
			xtick={1,1.45,2.15,3.85,4.55,5},
			xticklabels={$t'$, $\alpha_1$, $\alpha_2$, $\alpha_3$, $\alpha_4$, $t''$},
			ytick=\empty,
			clip=true,
			samples=400
			]
			
			\addplot[
			blue,
			thick,
			domain=1:5
			] {2.8*(x-1)*(5-x)/4};
			
			\addplot[red, dashed, thick] coordinates {(0,1.12) (6,1.12)};
			\addplot[orange!85!black, dashed, thick] coordinates {(0,2.3) (6,2.3)};
			
			\addplot[
			violet,
			very thick,
			domain=1.48:2.12
			] {(2.15-x)/((x-1.45)+0.015)};
			
			\addplot[
			violet,
			very thick,
			domain=3.88:4.52
			] {(x-3.85)/((4.55-x)+0.015)};
			
			\addplot[
			black,
			thick,
			domain=0:6
			] {x};
			
			\addplot[gray, dotted] coordinates {(1.45,0) (1.45,6)};
			\addplot[gray, dotted] coordinates {(2.15,0) (2.15,6)};
			\addplot[gray, dotted] coordinates {(3.85,0) (3.85,6)};
			\addplot[gray, dotted] coordinates {(4.55,0) (4.55,6)};
			
			\addplot[only marks, mark=*, mark size=2pt, black] coordinates {
				(1.7,1.7)
				(4.44,4.44)
			};
			
			\node[black, above left] at (axis cs:1.673,1.673) {$P_1$};
			\node[black, above left] at (axis cs:4.327,4.327) {$P_2$};
			
			\node[blue] at (axis cs:3,2.9) {$a(\alpha)$};
			\node[red, anchor=west] at (axis cs:5.05,1.12) {$\dfrac{\theta}{\lambda_1}$};
			\node[orange!85!black, anchor=west] at (axis cs:5.05,2.3) {$\dfrac{\beta}{\lambda_1}$};
			\node[violet] at (axis cs:3,5.2) {$Q\!\left(\dfrac{1}{a(\alpha)}\right)$};
			\node[black] at (axis cs:5.2,5.4) {$\alpha$};
			
		\end{axis}
	\end{tikzpicture}
	\caption{$(\alpha_1,\alpha_2)$ has type $I_{\infty,0}$, while $(\alpha_3,\alpha_4)$ has type $I_{0,\infty}$. Fixed points at $P_1$ and $P_2$.}
	\label{fig04}
\end{figure}

\begin{figure}[H]
	\centering
	\begin{tikzpicture}
		\begin{axis}[
			width=13cm,
			height=8cm,
			xmin=0, xmax=6,
			ymin=0, ymax=5.8,
			axis lines=left,
			xlabel={$\alpha$},
			xtick={1,1.8,4.2,5},
			xticklabels={$t'$, $\alpha_1$, $\alpha_2$, $t''$},
			ytick=\empty,
			clip=true,
			samples=400
			]
			
			\addplot[
			blue,
			thick,
			domain=1:5
			] {1.6*(x-1)*(5-x)/4};
			
			\addplot[red, dashed, thick] coordinates {(0,1.03) (6,1.03)};
			\addplot[orange!85!black, dashed, thick] coordinates {(0,2.2) (6,2.2)};
			
			\addplot[
			violet,
			very thick,
			domain=1.83:4.17
			] {1 + 0.7/((x-1.8)*(4.2-x))};
			
			\addplot[
			black,
			thick,
			domain=0:5.8
			] {x};
			
			\addplot[gray, dotted] coordinates {(1.8,0) (1.8,5.8)};
			\addplot[gray, dotted] coordinates {(4.2,0) (4.2,5.8)};
			
			\addplot[
			only marks,
			mark=*,
			mark size=2pt,
			black
			] coordinates {
				(2.102,2.102)
				(4.103,4.103)
			};
			
			\node[black, above left] at (axis cs:2.102,2.102) {$P_1$};
			\node[black, above left] at (axis cs:4.103,4.103) {$P_2$};
			
			\node[blue] at (axis cs:3,1.8) {$a(\alpha)$};
			\node[red, anchor=west] at (axis cs:5.05,1.03) {$\dfrac{\theta}{\lambda_1}$};
			\node[orange!85!black, anchor=west] at (axis cs:5.05,2.2) {$\dfrac{\beta}{\lambda_1}$};
			\node[violet] at (axis cs:3,4.9) {$Q\!\left(\dfrac{1}{a(\alpha)}\right)$};
			\node[black] at (axis cs:5.05,5.2) {$\alpha$};
			
		\end{axis}
	\end{tikzpicture}
	\caption{$(\alpha_1,\alpha_2)$ has type $I_{\infty,\infty}$. Fixed points at $P_1$ and $P_2$.}
	\label{fig05}
\end{figure}
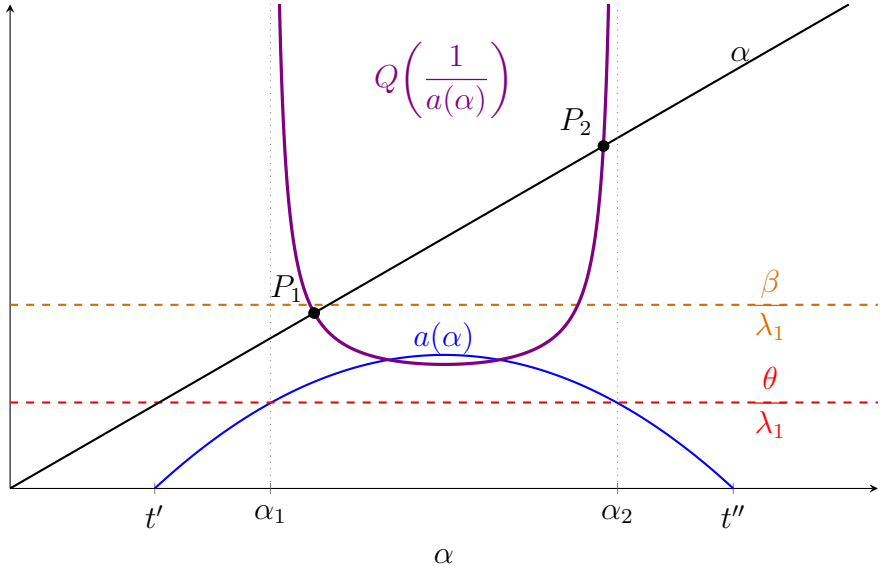
\subsection{The Case ``$Q$ is Increasing"} \label{sqincreasing}

In this section we assume that $Q$ is increasing, so that by Lemma \ref{l1} it is an increasing homeomorphism, and the following parameter is well defined:

\begin{equation}\label{def:lambda0sharp}
\lambda_{0}:=\max_{\alpha\in[t',t'']}\Big(a(\alpha)\,Q^{-1}(\alpha)\Big).
\end{equation}

For the next result recall that $A=\max\{a(\alpha):\alpha\in [t',t'']\}$.

\begin{lem}\label{lmax} $D_\lambda$ has only maximal open intervals of type $I_{\infty,\infty}$ if, and only if, $A<\frac{\lambda\beta}{\lambda_1}$.
\end{lem}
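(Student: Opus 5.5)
Both implications reduce to the definition of $D_\lambda$ in \eqref{def:Dilambda} together with the classification of maximal intervals in Lemma \ref{l3}. A maximal open interval $I\subset D_\lambda$ has an endpoint of ``$0$-type'' exactly when $a$ reaches the upper threshold $\frac{\beta}{\lambda_1}\lambda$ there. So the claim says: no maximal interval has such an endpoint if and only if $a$ never reaches that level on $[t',t'']$. The argument is elementary, using only continuity of $a$ and $a(t')=a(t'')=0$. Throughout I take $\lambda\in(0,\frac{\lambda_1}{\theta}A)$, as stipulated in Remark \ref{defDlbd}, so that $D_\lambda\neq\emptyset$.

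($\Leftarrow$) Assume $A<\frac{\lambda\beta}{\lambda_1}$. Then $a(\alpha)\le A<\frac{\beta}{\lambda_1}\lambda$ for every $\alpha\in[t',t'']$, so the upper constraint in \eqref{def:Dilambda} is automatic and
\[
D_\lambda=\{\alpha\in(t',t''):a(\alpha)>\tfrac{\theta}{\lambda_1}\lambda\}.
\]
Let $I$ be a maximal open interval of this set. Its endpoints lie in $[t',t'']$. At an endpoint that equals $t'$ or $t''$ we have $a=0\le \frac{\theta}{\lambda_1}\lambda$. At an interior endpoint, maximality and continuity give $a=\frac{\theta}{\lambda_1}\lambda$. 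In either case $a(\alpha)\to \frac{\theta}{\lambda_1}\lambda$ is the relevant boundary behaviour, meaning the boundary is reached through the lower constraint. The one degenerate situation is $\theta=0$ with an endpoint at $t'$ or $t''$; there $a\to0=\frac{\theta}{\lambda_1}\lambda$ as well. Hence $I$ is of type $I_{\infty,\infty}$.

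($\Rightarrow$) I argue by contraposition. Suppose $A\ge\frac{\lambda\beta}{\lambda_1}$, which forces $\beta<\infty$. Pick $\alpha^*$ with $a(\alpha^*)=A$. By continuity and $a(t')=0$, the set $\{\alpha\in[t',\alpha^*]:a(\alpha)=\frac{\beta}{\lambda_1}\lambda\}$ is nonempty; let $\alpha_1$ be its infimum. Then $\alpha_1>t'$ and $a<\frac{\beta}{\lambda_1}\lambda$ on $[t',\alpha_1)$. Let $\alpha_0:=\sup\{\alpha\in[t',\alpha_1]:a(\alpha)\le\frac{\theta}{\lambda_1}\lambda\}$; this set contains $t'$, and $\alpha_0<\alpha_1$ because $a(\alpha_1)>\frac{\theta}{\lambda_1}\lambda$ (using $\theta<\beta$). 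On $(\alpha_0,\alpha_1)$ we then have $\frac{\theta}{\lambda_1}\lambda<a<\frac{\beta}{\lambda_1}\lambda$. This is exactly the construction in the proof of Lemma \ref{l3}(ii), and it shows $(\alpha_0,\alpha_1)$ is a maximal interval of $D_\lambda$ of type $I_{\infty,0}$. That contradicts the assumption that all maximal intervals are of type $I_{\infty,\infty}$.

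The main delicate point is the boundary case $A=\frac{\lambda\beta}{\lambda_1}$. There $\alpha_1$ may coincide with a maximizer of $a$, so $a$ only touches the upper level. Even so, the interval $(\alpha_0,\alpha_1)$ still has $a(\alpha)\to\frac{\beta}{\lambda_1}\lambda$ at $\sup$, so it is genuinely of type $I_{\infty,0}$ rather than $I_{\infty,\infty}$. I will verify that maximality at $\alpha_1$ holds because $\alpha_1\notin D_\lambda$, and that the inequality $\theta<\beta$ from \eqref{f1} is what guarantees $\alpha_0<\alpha_1$.
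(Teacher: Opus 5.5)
Your proof is correct and is essentially the paper's argument. The paper runs a chain of equivalences through the values of $a$ on $\partial D_\lambda$, and your first-hitting construction in ($\Rightarrow$) makes explicit the step that chain leaves implicit, namely that $A\ge\frac{\lambda\beta}{\lambda_1}$ forces $a=\frac{\lambda\beta}{\lambda_1}$ at some point of $\overline{D_\lambda}$. In ($\Leftarrow$), you should state outright that for $\theta>0$ an endpoint cannot be $t'$ or $t''$, since $a<\frac{\theta}{\lambda_1}\lambda$ near them, rather than writing that $a\to\frac{\theta}{\lambda_1}\lambda$ there ``in either case''.
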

\begin{proof} Indeed, note that $A<\frac{\lambda\beta}{\lambda_1}$ if, and only if, $a(\alpha)<\frac{\lambda\beta}{\lambda_1}$ for all $\alpha\in \overline{D}_\lambda$ if, and only if,  $a(\alpha)<\frac{\lambda\beta}{\lambda_1}$ for all $\alpha\in \partial{D}_\lambda$ if, and only if, $a(\alpha)=\frac{\lambda\theta}{\lambda_1}$ for all $\alpha\in \partial{D}_\lambda$ if, and only if, $D_\lambda$ has only maximal open intervals of type $I_{\infty,\infty}$.
\end{proof}

\begin{prop}\label{prop1}\strut
\begin{enumerate}
    \item[$(i)$] If $0<\lambda<\lambda_{0}$ then  $P$ has at least two fixed points $\alpha_{1},\alpha_{2}$ such that $t'<\alpha_{1}<\alpha_{2}<t''$.
\item[$(ii)$] If $\lambda=\lambda_{0}$ then $P$ has at least one fixed point in $(t',t'')$.
\item[$(iii)$] If $\lambda>\lambda_{0}$ then $P$ has no fixed points in $(t',t'')$.
\end{enumerate}

\end{prop}

\begin{proof} As we already noticed in the proof of Proposition \ref{prop0}, if $D_\lambda$ contains a maximal open interval of type $I_{\infty,0}$ or $I_{0,\infty}$ or $I_{0,0}$, then $P$ has at least two fixed points in $D_\lambda$ (see Figure \ref{fig04}). So we focus on the case where the maximal intervals of $D_\lambda$ are of type $I_{\infty,\infty}$. By Lemma \ref{lmax} we know that this happens if, and only if, $A<\frac{\lambda\beta}{\lambda_1}$ or, equivalently, if, and only if, $\frac{A\lambda_1}{\beta}<\lambda$. 
	
	We claim that $c(\frac{A\lambda_1}{\beta})<0$, where $c(\lambda)$ was defined in \eqref{defc}. Indeed, it is clear that when $\lambda=\frac{A\lambda_1}{\beta}$, then $D_\lambda$ has a maximal open interval of type $I_{\infty,0}$,  say $J$. So, by continuity, if we take $\alpha$ close enough to $\sup J$, then $P(\alpha)-\alpha$ is negative. 
	
	Now we claim that $c(\lambda)$ is increasing in the interval $(\frac{A\lambda_1}{\beta},\frac{A\lambda_1}{\theta})$. Indeed, take $\frac{A\lambda_1}{\beta}<\mu_1<\mu_2<\frac{A\lambda_1}{\theta}$ and $\alpha_2\in D_{\mu_2}$ such that $c(\mu_2)=T(\mu_2,\alpha_2)$. Since $\frac{\mu_2\theta}{\lambda_1}<a(\alpha_2)\le A< \frac{\mu_2\beta}{\lambda_1}$, it follows that $\frac{\mu_1\theta}{\lambda_1}<\frac{\mu_2\theta}{\lambda_1}<a(\alpha_2)\le A< \frac{\mu_1\beta}{\lambda_1}$, so $\alpha_2\in D_{\mu_1}$. Thus
	\begin{eqnarray*}
		c(\mu_2)=Q\!\left(\frac{\mu_2}{a(\alpha_2)}\right)-\alpha_2>Q\!\left(\frac{\mu_1}{a(\alpha_2)}\right)-\alpha_2\ge c(\mu_1).
	\end{eqnarray*}
	
	It follows that 
	$\lambda_0=\widetilde\lambda_0=\max_{\alpha\in[t',t'']}\Big(a(\alpha)\,Q^{-1}(\alpha)\Big)$, where $\widetilde\lambda_0$ was defined in \eqref{lbdtilde}. This proves $(i)$ and $(iii)$. To prove $(ii)$ note that, since $Q^{-1}$ is strictly increasing on $(0,\infty)$ we have, for every $\alpha\in D_\lambda$,
	\begin{align}
		P(\alpha)<\alpha \mbox{ (respect. $=$, $>$)}
		&\iff
		Q\!\left(\frac{\lambda}{a(\alpha)}\right)<Q\!\left(Q^{-1}(\alpha)\right)\notag\\
		&\iff
		\frac{\lambda}{a(\alpha)}<Q^{-1}(\alpha)\notag\\
		&\iff
		\lambda<a(\alpha)Q^{-1}(\alpha) \mbox{ (respect. $=$, $>$)}. \label{eq:key-ineq}
	\end{align}
	Now fix $\alpha_0\in (t',t'')$ such that $\lambda_0=a(\alpha_0)\,Q^{-1}(\alpha_0)$. By definition, $c(\lambda_0)=0$, so it follows that $\alpha_0\in D_\lambda$, since $T(\lambda_0,\alpha)<0$ or $\infty$ in $\partial D_\lambda$, so by \eqref{eq:key-ineq} the proof is complete (see Figure \ref{fig06}).
	
	\end{proof}

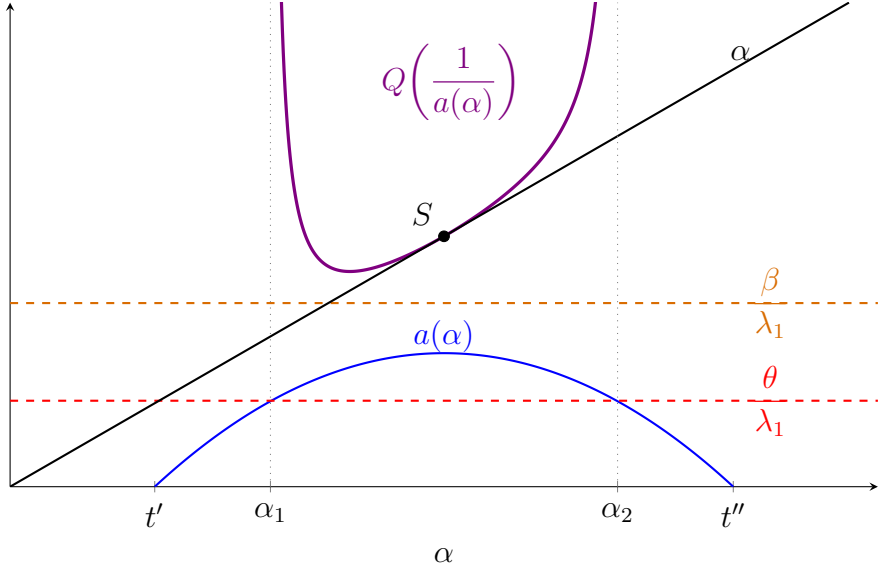
\begin{figure}[H]
	\centering
	\begin{tikzpicture}
		\begin{axis}[
			width=13cm,
			height=8cm,
			xmin=0, xmax=6,
			ymin=0, ymax=5.8,
			axis lines=left,
			xlabel={$\alpha$},
			xtick={1,1.8,4.2,5},
			xticklabels={$t'$, $\alpha_1$, $\alpha_2$, $t''$},
			ytick=\empty,
			clip=true,
			samples=400
			]
			
			\addplot[
			blue,
			thick,
			domain=1:5
			] {1.6*(x-1)*(5-x)/4};
			
			\addplot[red, dashed, thick] coordinates {(0,1.03) (6,1.03)};
			\addplot[orange!85!black, dashed, thick] coordinates {(0,2.2) (6,2.2)};
			
			\addplot[
			violet,
			very thick,
			domain=1.83:4.17
			] {x + 0.55*(x-3)^2/((x-1.8)*(4.2-x))};
			
			\addplot[
			black,
			thick,
			domain=0:5.8
			] {x};
			
			\addplot[gray, dotted] coordinates {(1.8,0) (1.8,5.8)};
			\addplot[gray, dotted] coordinates {(4.2,0) (4.2,5.8)};
			
			\addplot[
			only marks,
			mark=*,
			mark size=2pt,
			black
			] coordinates {
				(3,3)
			};
			
			\node[black, above left] at (axis cs:3,3) {$S$};
			
			\node[blue] at (axis cs:3,1.8) {$a(\alpha)$};
			\node[red, anchor=west] at (axis cs:5.05,1.03) {$\dfrac{\theta}{\lambda_1}$};
			\node[orange!85!black, anchor=west] at (axis cs:5.05,2.2) {$\dfrac{\beta}{\lambda_1}$};
			\node[violet] at (axis cs:3.05,4.85) {$Q\!\left(\dfrac{1}{a(\alpha)}\right)$};
			\node[black] at (axis cs:5.05,5.2) {$\alpha$};
			
		\end{axis}
	\end{tikzpicture}
	\caption{$(\alpha_1,\alpha_2)$ is of type $I_{\infty,\infty}$. The line $y=\alpha$ is tangent to $Q\!\left(\frac{1}{a(\alpha)}\right)$ at $S$.}
	\label{fig06}
\end{figure}


\begin{prop}\label{oscifixed} Assume the conditions of Theorem \ref{thm6}. Then $P$ has at least $2j$ fixed points.
\end{prop}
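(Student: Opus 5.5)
The plan is to reduce fixed points of $P$ to level-set crossings of $H(\alpha)=a(\alpha)Q^{-1}(\alpha)$ on $(t_0,t_1)=(0,t_1)$, and then count crossings with the intermediate value theorem.

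\emph{Step 1: fixed points are solutions of $H=\lambda$.} Since $Q$ is increasing, Lemma \ref{l1} shows it is a homeomorphism onto $(0,\infty)$. The equivalence \eqref{eq:key-ineq} from the proof of Proposition \ref{prop1} gives, for $\alpha\in D_\lambda$, that $P(\alpha)=\alpha$ if and only if $\lambda=H(\alpha)$. Conversely, let $\alpha\in(0,t_1)$ satisfy $H(\alpha)=\lambda$. Then $\lambda/a(\alpha)=Q^{-1}(\alpha)\in(\lambda_1/\beta,\lambda_1/\theta)$, so $\alpha\in D_\lambda$ automatically. Hence it suffices to find $2j$ distinct solutions of $H(\alpha)=\lambda$ in $(0,t_1)$ for each $\lambda\in(m,M)$.

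\emph{Step 2: boundary behaviour.} $H$ is continuous on $(0,t_1)$. As $\alpha\to t_1^-$ we have $a(\alpha)\to0$ while $Q^{-1}(\alpha)\to Q^{-1}(t_1)<\infty$, so $H\to0$. As $\alpha\to0^+$ we have $a(\alpha)\to0$ and $Q^{-1}(\alpha)\to\lambda_1/\beta<\infty$, so again $H\to 0$. We therefore extend $H$ continuously to $[0,t_1]$ with $H(0)=H(t_1)=0<\lambda$.

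\emph{Step 3: interlacing and counting.} List the local maximizers as $\alpha^1<\dots<\alpha^j$, and set $\beta^0=0$ and $\beta^j=t_1$. For $1\le l\le j-1$, let $\beta^l$ be a minimizer of $H$ on $[\alpha^l,\alpha^{l+1}]$.

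This step is the only delicate point: we must rule out the minimum being attained at an endpoint. Suppose the minimum on $[\alpha^l,\alpha^{l+1}]$ were attained at $\alpha^l$. Then $\alpha^l$ would be a local minimizer from the right and a local maximizer, which forces $H$ to be constant on a one-sided neighbourhood of $\alpha^l$. That produces local minimizers with value $H(\alpha^l)\ge M>m$ (in fact infinitely many, contradicting finiteness of $M_2$), which contradicts $\sup_{M_2} H = m$. The same argument applies at $\alpha^{l+1}$.

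So each $\beta^l$ is interior to $(\alpha^l,\alpha^{l+1})$, is a genuine local minimizer, and satisfies $H(\beta^l)\le m<\lambda$. Meanwhile $H(\alpha^l)\ge M>\lambda$. By the intermediate value theorem, each of the $2j$ pairwise disjoint open intervals $(\beta^{l-1},\alpha^l)$ and $(\alpha^l,\beta^l)$, for $l=1,\dots,j$, contains a solution of $H=\lambda$.

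By Step 1 these are $2j$ distinct fixed points of $P$, and since they lie in $D_\lambda$ they yield $2j$ solutions of \eqref{Pg} in $(t_0,t_1)$.
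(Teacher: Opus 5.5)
Your proposal is correct and follows the same route as the paper: use \eqref{eq:key-ineq} to turn fixed points of $P$ into solutions of $H(\alpha)=\lambda$ in $(t_0,t_1)$, and then count those solutions, a step the paper simply calls clear. What you add are exactly the details needed to make that counting rigorous: the converse in Step 1 (a solution of $H=\lambda$ automatically lies in $D_\lambda$), the boundary limits $H(0^+)=H(t_1^-)=0$ (the first relies on $a(0)=0$, i.e.\ the ``only two zeros'' assumption of Theorem \ref{thm6}), and the use of the finiteness of $M_1,M_2$ to rule out a minimum at a maximizer $\alpha^l$.
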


\begin{proof} Indeed, from the equivalences in \eqref{eq:key-ineq} it is sufficient to show that for $\lambda=a(\alpha)Q^{-1}(\alpha)$ we have at least $2j$ solutions, which is clear from our assumptions.
   
\end{proof}

\subsection{Some Bounds for $\lambda_0$}

In this section we prove some bounds to $\lambda_0$ in the case $g(u)=\int_\Omega |u|^\gamma dx$.

\begin{lem}\label{lbounds}
Let $v$ satisfy 
$-\Delta v=1$ in $\Omega$, $v=0$ on $\partial\Omega$,
and 
$C_e:=\Big(\int_\Omega e_1^\gamma\,dx\Big)^{1/\gamma}$.
Then:
\begin{enumerate}
\item[(a)]
For every $s>0$ such that $\lambda_1/s\in(\theta,\beta)$, we have
\[
Q(s)\ge \big(\psi^{-1}(\lambda_1/s)\big)^\gamma\,\int_\Omega e_1^\gamma\,dx
= \big(\psi^{-1}(\lambda_1/s)\big)^\gamma\,C_e^\gamma.
\]
Consequently, for every $\alpha>0$,
\begin{equation}\label{eq:Qinv-upper-general}
\boxed{\quad
Q^{-1}(\alpha)\ \le\ \frac{\lambda_1}{\psi\!\left(\alpha^{1/\gamma}/C_e\right)}.\quad}
\end{equation}
\item[(b)]
Assume moreover that $L:=\sup_{t>0}f(t)<\infty$. Then, for every $s>0$,
\[
Q(s)\le (sL)^\gamma\int_\Omega v^\gamma\,dx=(sL)^\gamma C_v,
\]
so that, for every $\alpha>0$,
\begin{equation}\label{eq:Qinv-lower-general}
\boxed{\quad
Q^{-1}(\alpha)\ \ge\ \frac{\alpha^{1/\gamma}}{L\,C_v^{1/\gamma}}\quad.}
\end{equation}
\end{enumerate}
\end{lem}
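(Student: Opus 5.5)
Both bounds come from comparing $w_s$ with explicit sub- and supersolutions of \eqref{ws}. We then turn each comparison into a bound on $Q(s)=\int_\Omega w_s^\gamma\,dx$ and invert using monotonicity. Throughout, we read $e_1$ as the first eigenfunction $\varphi_1$ normalized by $\|\varphi_1\|_\infty=1$, so $0<e_1\le 1$. This is the normalization used in the proof of Lemma \ref{l1}(iii), and it is what makes $C_e$ meaningful. We also read $C_v$ in (b) as $\int_\Omega v^\gamma\,dx$.

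For (a), fix $s$ with $\lambda_1/s\in(\theta,\beta)$, i.e. $s\in(\lambda_1/\beta,\lambda_1/\theta)$, and put $\eta_s:=\psi^{-1}(\lambda_1/s)$ and $z_s=\eta_s e_1$. This is exactly the subsolution from Lemma \ref{l1}(iii). Since $\psi$ is decreasing and $e_1\le1$, we get $\psi(\eta_s e_1)\ge\psi(\eta_s)=\lambda_1/s$, hence $-\Delta z_s=\lambda_1 z_s\le s f(z_s)$. The comparison principle for sublinear problems (the Brezis--Oswald type comparison already used in Lemma \ref{l1}) then gives $w_s\ge z_s$. Raising to the power $\gamma$ and integrating yields $Q(s)\ge \eta_s^\gamma C_e^\gamma$.

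To invert, take $\alpha>0$ and let $s=Q^{-1}(\alpha)$; here we use that $Q$ is an increasing homeomorphism, which holds for this $g$ by Theorem \ref{thm1} and Lemma \ref{l1}. Then $\alpha\ge (\psi^{-1}(\lambda_1/s))^\gamma C_e^\gamma$, i.e. $\psi^{-1}(\lambda_1/s)\le \alpha^{1/\gamma}/C_e$. Applying the decreasing map $\psi$ reverses the inequality and gives $\lambda_1/s\ge\psi(\alpha^{1/\gamma}/C_e)$. This is \eqref{eq:Qinv-upper-general}. If $\alpha^{1/\gamma}/C_e$ lies outside the range where this is meaningful, no case split is needed, since $\psi$ is defined on all of $(0,\infty)$.

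For (b), observe that $W:=sLv$ satisfies $-\Delta W=sL\ge s f(w_s)=-\Delta w_s$. The weak maximum principle applied to $W-w_s$ gives $w_s\le sLv$, and hence $Q(s)\le (sL)^\gamma C_v$. This step uses no monotonicity of $f$. Setting $s=Q^{-1}(\alpha)$ gives $\alpha\le (Q^{-1}(\alpha)L)^\gamma C_v$, which rearranges to \eqref{eq:Qinv-lower-general}.

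The only delicate point is the comparison $w_s\ge z_s$ in (a). An ordinary maximum principle does not apply there, because $f$ is not monotone in a convenient way. We rely on the uniqueness/comparison for problems with $f(t)/t$ decreasing, exactly as in Lemma \ref{l1}. If needed, this can be justified by monotone iteration between $z_s$ and a large supersolution, followed by uniqueness from Lemma \ref{lem:unified-minimizer}(iii).
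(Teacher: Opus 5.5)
Your proof is correct and follows the paper's argument essentially step for step. In (a) you compare $w_s$ with the subsolution $z_s=\psi^{-1}(\lambda_1/s)\,e_1$ via the sublinear comparison principle; in (b) you compare with the supersolution $sLv$ via the weak maximum principle; and in both you invert through $s=Q^{-1}(\alpha)$ using the monotonicity of $Q$ and $\psi$. (You also correctly require $\lambda_1/s\in(\theta,\beta)$, where the paper's proof has the slip $(0,\beta)$. Your optional ``monotone iteration'' remark would, for merely continuous $f$, need the variational or Schauder form of the sub/supersolution method, but the main line does not rely on it.)
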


\begin{proof}
(a) Fix $s>0$ with $\lambda_1/s\in(0,\beta)$ and set
\[
\eta_s:=\psi^{-1}\!\Big(\frac{\lambda_1}{s}\Big),\qquad z_s:=\eta_s e_1.
\]
Since $0<e_1\le 1$ and $\psi$ is decreasing, $\psi(z_s)=\psi(\eta_s e_1)\ge \psi(\eta_s)=\lambda_1/s$. Hence
\[
-\Delta z_s=\lambda_1\eta_s e_1 \le s\,\psi(z_s)\,z_s=s f(z_s),
\]
so $z_s$ is a subsolution of \eqref{ws}. By comparison, $w_s\ge z_s$ in $\Omega$, and thus
\[
Q(s)=\int_\Omega w_s^\gamma\,dx\ge \int_\Omega z_s^\gamma\,dx
=\eta_s^\gamma\int_\Omega e_1^\gamma\,dx.
\]
Now let $\alpha>0$ and choose $s=Q^{-1}(\alpha)$. Then $\alpha=Q(s)\ge \eta_s^\gamma C_e^\gamma$, so
$\eta_s\le \alpha^{1/\gamma}/C_e$. Since $\psi$ is decreasing,
\[
\psi(\eta_s)\ge \psi\!\left(\alpha^{1/\gamma}/C_e\right).
\]
But $\psi(\eta_s)=\lambda_1/s$, hence
\[
\frac{\lambda_1}{s}\ge \psi\!\left(\alpha^{1/\gamma}/C_e\right)
\quad\Longrightarrow\quad
s\le \frac{\lambda_1}{\psi\!\left(\alpha^{1/\gamma}/C_e\right)}.
\]
This proves \eqref{eq:Qinv-upper-general}.

\smallskip
(b) Suppose $L=\sup_{t>0}f(t)<\infty$. Since $f(w_s)\le L$, from \eqref{ws} we have
\[
-\Delta w_s=s f(w_s)\le sL\quad\text{in }\Omega.
\]
Let $U:=sL\,v$. Then $-\Delta U=sL\ge -\Delta w_s$ in $\Omega$ and $U=w_s=0$ on $\partial\Omega$. By comparison,
$0<w_s\le U$ in $\Omega$, and hence
\[
Q(s)=\int_\Omega w_s^\gamma\,dx\le \int_\Omega (sL v)^\gamma\,dx=(sL)^\gamma\int_\Omega v^\gamma\,dx.
\]
Setting again $s=Q^{-1}(\alpha)$ yields $\alpha\le (sL)^\gamma C_v$, i.e.
$s\ge \alpha^{1/\gamma}/(LC_v^{1/\gamma})$, which is \eqref{eq:Qinv-lower-general}.
\end{proof}

\begin{cor}\label{c1}
Fix $i\in\{1,\dots,k-1\}$, and define
\[
\lambda_{0}:=\max_{\alpha\in[t',t'']}\Big(a(\alpha)\,Q^{-1}(\alpha)\Big).
\]
Then, with $C_e$ as in Lemma~\ref{lbounds},
\begin{equation}\label{eq:lambda0i-upper-general}
\boxed{\quad
\lambda_{0}
\ \le\
\lambda_1\,
\max_{\alpha\in[t',t'']}
\frac{a(\alpha)}{\psi\!\left(\alpha^{1/\gamma}/C_e\right)}
\quad}
\end{equation}
 If in addition $L=\sup_{t>0}f(t)<\infty$, then,
\begin{equation}\label{eq:lambda0i-lower-general}
\boxed{\quad
\lambda_{0}
\ \ge\
\frac{1}{L\,C_v^{1/\gamma}}\,
\max_{\alpha\in[t',t'']}\Big(a(\alpha)\,\alpha^{1/\gamma}\Big)
\quad.}
\end{equation}
\end{cor}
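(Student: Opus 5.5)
The corollary follows by substituting the pointwise bounds of Lemma~\ref{lbounds} for $Q^{-1}$ into the definition of $\lambda_0$ and then taking maxima over $[t',t'']$. Since $a\ge 0$ on $[t',t'']$, any inequality $Q^{-1}(\alpha)\le U(\alpha)$ (respectively $Q^{-1}(\alpha)\ge V(\alpha)$) for $\alpha\in(t',t'']$ gives $a(\alpha)Q^{-1}(\alpha)\le a(\alpha)U(\alpha)$ (respectively $\ge a(\alpha)V(\alpha)$). Taking the maximum of both sides over $[t',t'']$ then preserves the inequality.

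\textbf{Upper bound.} For $\alpha\in(t',t'']$ with $\alpha>0$, I apply \eqref{eq:Qinv-upper-general}, which gives
\[
a(\alpha)Q^{-1}(\alpha)\le \lambda_1\,\frac{a(\alpha)}{\psi(\alpha^{1/\gamma}/C_e)}.
\]
Maximizing over $\alpha$ yields \eqref{eq:lambda0i-upper-general}.

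\textbf{Lower bound.} This part assumes $L<\infty$. I choose $\alpha_*\in[t',t'']$ realizing $\max a(\alpha)\alpha^{1/\gamma}$; this maximum exists since the function is continuous on a compact interval. If the maximum value is $0$, the claim is trivial because $\lambda_0\ge 0$. Otherwise $\alpha_*$ is interior with $a(\alpha_*)>0$, and \eqref{eq:Qinv-lower-general} gives
\[
\lambda_0\ge a(\alpha_*)Q^{-1}(\alpha_*)\ge \frac{a(\alpha_*)\alpha_*^{1/\gamma}}{L C_v^{1/\gamma}},
\]
which is \eqref{eq:lambda0i-lower-general}.

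\textbf{The only delicate point: endpoints.} At $\alpha=t'$, which may equal $0$, the expression $Q^{-1}(0)$ is not defined. The convention is to interpret the product $a(\alpha)Q^{-1}(\alpha)$ as $0$ at points where $a$ vanishes, consistent with the limit behaviour $Q^{-1}(\alpha)\to\lambda_1/\beta$ as $\alpha\to0$, which is finite or zero when $\beta=\infty$. With that convention the endpoints contribute $0$ to both sides. The remaining endpoint issues are handled as follows.

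\begin{itemize}
\item For the upper bound at $t'=0$: $\psi(0^+)=\beta$ may be infinite, so the right-hand quotient $a(\alpha)/\psi(\alpha^{1/\gamma}/C_e)$ tends to $0$ there. This is harmless because we bound a maximum of nonnegative quantities.
\item In Lemma~\ref{lbounds}(a) the admissible $s$ satisfies $\lambda_1/s\in(\theta,\beta)$. This is automatic for $s=Q^{-1}(\alpha)\in(\lambda_1/\beta,\lambda_1/\theta)$, so applying it at every $\alpha>0$ is legitimate.
\end{itemize}

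No other obstacle is expected. I would also note that $C_e$ refers to the $L^\infty$-normalized eigenfunction $e_1$ used in that lemma, as in the paper.
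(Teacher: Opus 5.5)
Your proposal is correct and matches the paper's proof: multiply the pointwise bounds \eqref{eq:Qinv-upper-general} and \eqref{eq:Qinv-lower-general} by $a(\alpha)\ge 0$, then take maxima over $[t',t'']$. Your endpoint remarks are a harmless refinement that the paper leaves implicit: there $a$ vanishes, and both $a(\alpha)Q^{-1}(\alpha)$ and $a(\alpha)/\psi(\alpha^{1/\gamma}/C_e)$ extend continuously by $0$.
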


\begin{proof}
The estimate \eqref{eq:lambda0i-upper-general} follows by multiplying \eqref{eq:Qinv-upper-general} by $a(\alpha)$ and taking
the maximum over $\alpha\in[t',t'']$. Similarly, \eqref{eq:lambda0i-lower-general} follows from
\eqref{eq:Qinv-lower-general}.
\end{proof}

\section{Proofs of the Main Results}

Before we prove our main results, let us show the relation between solutions of \eqref{Pg} and solutions of \eqref{palpha}.

\begin{lem}\label{equfixedpoints} Assume \eqref{f1} and fix $0\le t'<t''<\infty$ such that $a(t')=a(t'')=0$ and $a(\alpha)>0$ for $\alpha\in (t',t'')$. Then, $u$ is a solution of \eqref{Pg} satisfying
\begin{equation}
\label{ineqk}
    t'<g(u)<t'',
\end{equation}
if, and only if, $u$ is the unique solution of \eqref{palpha} for some $\alpha\in (t',t'')$ with $P(\alpha)=\alpha$, i.e., $u=u_\alpha$ and $P(\alpha)=\alpha$.
\end{lem}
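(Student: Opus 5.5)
The equivalence follows by setting $\alpha:=g(u)$ and reading the nonlocal coefficient as the constant $a(\alpha)$. I will prove the two implications separately, using only Lemma \ref{lem:unified-minimizer}(iii), Proposition \ref{p1} and the definition \eqref{Pi} of $P$.

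\emph{($\Rightarrow$)} Let $u$ be a solution of \eqref{Pg} with $t'<g(u)<t''$, and set $\alpha:=g(u)\in(t',t'')$. By hypothesis $a(\alpha)>0$, so $u$ solves $-\Delta u=\frac{\lambda}{a(\alpha)}f(u)$, $u>0$ in $\Omega$, $u=0$ on $\partial\Omega$. In other words, $u$ is a positive solution of \eqref{ws} with $s=\lambda/a(\alpha)$.

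Lemma \ref{lem:unified-minimizer}(iii) says that \eqref{ws} has a solution only when $s\in(\lambda_1/\beta,\lambda_1/\theta)$. Hence $\theta\lambda/\lambda_1<a(\alpha)<\beta\lambda/\lambda_1$, which means $\alpha\in D_\lambda$. By the uniqueness part of that lemma, or equivalently Proposition \ref{p1}, we get $u=w_{\lambda/a(\alpha)}=u_\alpha$. Therefore $P(\alpha)=g(u_\alpha)=g(u)=\alpha$, so $\alpha$ is a fixed point of $P$.

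\emph{($\Leftarrow$)} Let $\alpha\in(t',t'')$ with $P(\alpha)=\alpha$. Since $P$ is only defined on $D_\lambda$, we have $\alpha\in D_\lambda$, and $u_\alpha$ exists and is unique by Proposition \ref{p1}. The fixed point identity gives $g(u_\alpha)=P(\alpha)=\alpha$. Hence $a(g(u_\alpha))=a(\alpha)$, and the equation $-a(\alpha)\Delta u_\alpha=\lambda f(u_\alpha)$ becomes $-a(g(u_\alpha))\Delta u_\alpha=\lambda f(u_\alpha)$. Together with $u_\alpha>0$ in $\Omega$ and $u_\alpha=0$ on $\partial\Omega$, this shows that $u_\alpha$ solves \eqref{Pg} with $t'<g(u_\alpha)=\alpha<t''$.

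No step is a real obstacle. The one point needing care is in ($\Rightarrow$): we must check that $\alpha=g(u)$ lies in $D_\lambda$, so that $u_\alpha$ and $P(\alpha)$ are defined. This comes from the nonexistence half of Lemma \ref{lem:unified-minimizer}(iii), which is the Brezis–Oswald characterization.

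We also need $u\in C^1_0(\overline\Omega)$ so that $g(u)$ makes sense. A solution of \eqref{Pg} is understood as a weak solution with $g(u)$ well defined. Moreover, since $a(g(u))$ is a positive constant, standard elliptic regularity gives $u\in C^{1,\alpha}(\overline\Omega)$, as noted in the introduction.
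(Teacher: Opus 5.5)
Your proof is correct and follows the same argument as the paper. You set $\alpha=g(u)$, observe that $u$ then solves \eqref{palpha}, use the uniqueness in Proposition~\ref{p1} to get $u=u_\alpha$ and $P(\alpha)=\alpha$, and note that the converse is immediate. Your explicit check that $\alpha\in D_\lambda$, via the nonexistence half of Lemma~\ref{lem:unified-minimizer}(iii), supplies a step the paper leaves implicit.
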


\begin{proof} Indeed, suppose $u$ is a solution of \eqref{Pg} satisfying \eqref{ineqk}. Write $\alpha = g(u)$ and note that $\alpha\in (t',t'')$ so $u$ is a solution \eqref{palpha}. By Proposition \ref{p1} we know that this solution is unique, so $u=u_\alpha$ and clearly $P(\alpha)=g(u_\alpha)=g(u)=\alpha$.
The converse is trivial.
\end{proof}

Now we prove our main theorems:

\begin{proof}[Proof of Theorem \ref{thm-1}]: Just take $t'=t_i$, $t''=t_{i+1}$ and apply Propositions \ref{prop0} and \ref{prop1}, and Lemma \ref{equfixedpoints}.
\end{proof}

\begin{proof}[Proof of Theorem \ref{thm6}]: It follows from Proposition \ref{oscifixed} and Lemma \ref{equfixedpoints}.
\end{proof}

To continue we need the next result.

\begin{lem}\label{hlemma}
Let $\gamma_1, \gamma_2\geq 1$, $g_1(u)=\int_\Omega |u|^{\gamma_1} dx$, and $g_2(u)=\int_\Omega |\nabla u|^{\gamma_2}dx$ for  $u \in C_0^1(\overline{\Omega})$. Then $g_1$ satisfies \eqref{g1} and $g_2$ satisfies \eqref{g2}. Moreover $Q_1(s):=g_1(w_s)$ is increasing for all $\gamma_1\geq 1$ and $Q_2(s):=g_2(w_s)$ is increasing for $\gamma_2=2$.
   
\end{lem}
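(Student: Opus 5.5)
The statement has three parts: that $g_1$ satisfies \eqref{g1}, that $g_2$ satisfies \eqref{g2}, and the monotonicity of $Q_1$ and $Q_2$.

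\emph{The conditions \eqref{g1} and \eqref{g2}.} These were already checked in Example \ref{exam}. If $|u_n|\to\infty$ on a set $\Omega_1$ of positive measure, then Fatou's lemma applied to $|u_n|^{\gamma_1}\chi_{\Omega_1}$ gives $g_1(u_n)\to\infty$. The same argument applied to $|\nabla u_n|^{\gamma_2}$ handles $g_2$. Continuity of both functionals on $C^1_0(\overline\Omega)$ and $g_i(0)=0$ are immediate.

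\emph{Monotonicity of $Q_1$.} This follows directly from Lemma \ref{l1}$(i)$. For $\lambda_1/\beta<s_1<s_2<\lambda_1/\theta$ we have $0<w_{s_1}\le w_{s_2}$ with $w_{s_1}\not\equiv w_{s_2}$. Since both are continuous, strict inequality $w_{s_1}<w_{s_2}$ holds on a nonempty open set. Because $t\mapsto t^{\gamma_1}$ is strictly increasing on $[0,\infty)$, it follows that $\int_\Omega w_{s_1}^{\gamma_1}<\int_\Omega w_{s_2}^{\gamma_1}$, so $Q_1$ is (strictly) increasing. The assumption $\gamma_1\ge1$ is not even needed for this step.

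\emph{Monotonicity of $Q_2$ with $\gamma_2=2$.} This is the main step, since pointwise ordering of $w_s$ does not control gradients. I would use the equation instead. Testing \eqref{ws} with $w_s$ gives
\[
Q_2(s)=\int_\Omega|\nabla w_s|^2\,dx=s\int_\Omega f(w_s)w_s\,dx .
\]
For $s_1<s_2$ we have $w_{s_1}\le w_{s_2}$ and $w_{s_1}\not\equiv w_{s_2}$. Since $f(t)t$ is strictly increasing on $(0,\infty)$ (because $f>0$ and the map is increasing), the integral $\int_\Omega f(w_s)w_s$ is nondecreasing in $s$, and strictly increasing because strict inequality holds on an open set. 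The factor $s$ is also strictly increasing, so $Q_2(s_1)<Q_2(s_2)$.

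The point needing care is that $t\mapsto tf(t)$ really is monotone. Condition \eqref{f1} gives only that $f(t)/t$ is decreasing, not that $f$ is increasing. If that fails, I would switch to the alternative identity obtained by testing the equation for $w_{s_2}$ with $w_{s_1}$ and vice versa:
\[
\int\nabla w_{s_1}\cdot\nabla w_{s_2}=s_2\int f(w_{s_2})w_{s_1}=s_1\int f(w_{s_1})w_{s_2}.
\]
I would then combine this with Cauchy--Schwarz,
\[
\int\nabla w_{s_1}\cdot\nabla w_{s_2}\le \|\nabla w_{s_1}\|_2\,\|\nabla w_{s_2}\|_2 ,
\]
together with the Brezis--Oswald type inequality
\[
\int\Big(\frac{-\Delta w_{s_1}}{w_{s_1}}-\frac{-\Delta w_{s_2}}{w_{s_2}}\Big)\big(w_{s_1}^2-w_{s_2}^2\big)\ge 0 .
\]
Here $\frac{-\Delta w_s}{w_s}=s\psi(w_s)$ and $\psi$ is decreasing. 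The aim of this combination is to deduce $\|\nabla w_{s_1}\|_2<\|\nabla w_{s_2}\|_2$.

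In practice I expect the first identity to suffice whenever $f$ is increasing, and the second route to cover the general case.
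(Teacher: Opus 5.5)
Your Fatou argument for \eqref{g1} and \eqref{g2} is fine and matches the check in Example \ref{exam}. Your proof that $Q_1$ is strictly increasing is exactly the paper's.

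The gap is in the monotonicity of $Q_2$. Your first route needs $t\mapsto tf(t)=t^2\psi(t)$ to be increasing, and, as you suspect, \eqref{f1} does not give this. The hypothesis only requires $\psi$ to decrease, and $\psi$ may decrease faster than $t^{-2}$ on some interval. For instance, $\psi(t)=1+9/(1+e^{K(t-1)})$ with $K$ large satisfies \eqref{f1} with $\theta=1$. Yet $0.9^2\psi(0.9)\approx 8.1$, while $1.1^2\psi(1.1)\approx 1.21$. So the claim that $s\mapsto\int_\Omega f(w_s)w_s\,dx$ is nondecreasing is unsupported, and your parenthetical justification for it is circular.

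Your fallback route only lists ingredients and states an aim; the deduction is never carried out. The Brezis--Oswald inequality is also not the right tool here. With $w_{s_1}\le w_{s_2}$ and $s_1<s_2$, the factor $s_1\psi(w_{s_1})-s_2\psi(w_{s_2})$ has no definite sign, so the inequality does not single out which gradient norm is larger.

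The missing idea is to use the ordering $w_{s_1}\le w_{s_2}$ linearly, through a test function, rather than through monotonicity of $tf(t)$. The paper tests both equations with $w_{s_2}-w_{s_1}\in H^1_0(\Omega)$:
\[
\int_\Omega|\nabla w_{s_2}|^2dx-\int_\Omega|\nabla w_{s_1}|^2dx=\int_\Omega\nabla(w_{s_2}+w_{s_1})\cdot\nabla(w_{s_2}-w_{s_1})\,dx=\int_\Omega\big(s_2f(w_{s_2})+s_1f(w_{s_1})\big)(w_{s_2}-w_{s_1})\,dx>0.
\]
This uses only $f>0$ on $(0,\infty)$ and $w_{s_2}-w_{s_1}\ge 0$, $\not\equiv 0$.

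Your own cross identity would also have sufficed, with Cauchy--Schwarz alone and no Brezis--Oswald:
\[
\|\nabla w_{s_1}\|_2^2=s_1\int_\Omega f(w_{s_1})w_{s_1}\,dx<s_1\int_\Omega f(w_{s_1})w_{s_2}\,dx=\int_\Omega\nabla w_{s_1}\cdot\nabla w_{s_2}\,dx\le\|\nabla w_{s_1}\|_2\,\|\nabla w_{s_2}\|_2 .
\]
This gives $\|\nabla w_{s_1}\|_2<\|\nabla w_{s_2}\|_2$. As written, however, your proposal does not complete this step.
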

\begin{proof} By Lemma \ref{l1}(iii)-(iv) it is clear that $g_1$ and $g_2$ satisfy \eqref{g1} and \eqref{g2}, respectively. In addition, Lemma \ref{l1}(i) yields that $Q_1$ is increasing.  
Finally,
\begin{eqnarray*}
    \int_\Omega|\nabla w_{s_2}|^2dx- \int_\Omega|\nabla w_{s_1}|^2dx&=&\int_\Omega \nabla ( w_{s_2}+ w_{s_1})\nabla ( w_{s_2}- w_{s_1}) dx\\&=&\int_\Omega( s_2f(w_{s_2})+s_1f(w_{s_1}))(w_{s_2}-w_{s_1})dx>0,
\end{eqnarray*}
where the last inequality follows from Lemma \ref{l1}(i) and $f>0$. Thus $Q_2$ is increasing in this case as well.
\end{proof}

\begin{proof}[Proof of Theorem \ref{thm1}]: It follows from Lemma \ref{hlemma} and Theorem \ref{thm-1}.
\end{proof}

\begin{proof}[Proof of Theorem \ref{thm3}]:
Let $v$ be a positive solution of the problem
\begin{equation*}
-\Delta v = v^{p-1} \quad \text{in }\Omega,
\qquad
v=0 \quad \text{on }\partial\Omega.
\end{equation*}
As explained in Subsection \ref{pow}, $u=sv$ (with $s>0$) solves \eqref{Pg} if, and only if,
\begin{equation*}
\lambda=\left(\frac{C_v}{\alpha}\right)^{\frac{p-2}{\gamma}} a(\alpha), \quad \alpha=C_v s^{\gamma}.
\end{equation*}

Since the map $\alpha \to \alpha^{\frac{2-p}{\gamma}} a(\alpha)$ vanishes for $\alpha=t_i$ and $\alpha=t_{i+1}$, we easily deduce $(i)$, $(ii)$, and $(iii)$.
In addition, if $a$ is strictly concave then, since $1<p<2$ the map $\alpha \mapsto a(\alpha)-\lambda \left(\frac{\alpha}{C_v}\right)^{\frac{p-2}{\gamma}}$ is strictly concave as well, for any $\lambda>0$. We deduce then the exactness of solutions in $(i)$ and $(ii)$. The same conclusion holds if $\alpha \to \alpha^{\frac{2-p}{\gamma}} a(\alpha)$ is strictly concave.\\
\end{proof}

\begin{proof}[Proof of Theorem \ref{thm3'}]:
We argue as in the previous proof. Note that if $a$ is strictly concave in $(t_i,t_{i+1})$ and $\gamma+2<p<2^*$ then $\alpha \mapsto a(\alpha)-\lambda \left(\frac{\alpha}{C_v}\right)^{\frac{p-2}{\gamma}}$ is strictly concave as well.
\end{proof}

\begin{proof}[Proof of Example \ref{exa1}]: Let $t_i=i\pi$. Then it is clear that $a$ satisfies condition \eqref{a2} and is strictly concave in $(t_i,t_{i+1})$, for any $i\in\mathbb{N}\cup\{0\}$ ($i\in\mathbb{N}$ if $p>2$). We know from Theorems \ref{thm3} and \ref{thm3'} that for all $0<\lambda<\displaystyle \min_{i\in\mathbb{N}\cup\{0\}} \max_{\alpha\in[t_i,t_{i+1}]}\left\{
a(\alpha)\left(\frac{\alpha}{C_v}\right)^{\frac{2-p}{\gamma}}
\right\}$, problem \eqref{Pg} has, for any $i\in\mathbb{N}\cup\{0\}$ ($i\in\mathbb{N}$ if $p>2$), at least  two solutions $u_{1,i},u_{2,i}$ such that
\[
i\pi<g(u_{1,i}) <g(u_{2,i}) <(i+1)\pi.
\]

To conclude, note that
\begin{equation*}
    \displaystyle \min_{i\in\mathbb{N}\cup\{0\}} \max_{\alpha\in[t_i,t_{i+1}]}\left\{
a(\alpha)\left(\frac{\alpha}{C_v}\right)^{\frac{2-p}{\gamma}}
\right\}\ge \max_{\alpha\in[0,\pi]}\left\{
a(\alpha)\left(\frac{\alpha}{C_v}\right)^{\frac{2-p}{\gamma}}\
\right\}\ge \left(\frac{\pi}{2C_v}\right)^{\frac{2-p}{\gamma}}.
\end{equation*}
\end{proof}

\begin{proof}[Proof of Example \ref{exa2}]: Let $t_i=i\pi$. Since $a(\alpha)\alpha^{\frac{2-p}{\gamma}}=|\sin \alpha|$, it is clear that $\lambda_{0,i}=C_v^\frac{p-2}{\gamma}$ for every $i$. The conclusion follows from 
Theorems \ref{thm3} and \ref{thm3'}.	
\end{proof}

\section{The Regular Case}

In this section, we assume that $f$ is locally Lipschitz continuous. In this case, we have $f(w_s)\in C^{0, \alpha}(\overline{\Omega})$ and, by elliptic regularity theory,  $w_s\in C^{2, \alpha}(\overline{\Omega})$. Thus $g$ needs to be defined only in $C_{0}^2(\overline{\Omega})$. In particular, it can be given in terms of second order derivatives of $u$.

\medskip

As previously, throughout this section, we assume that $a$ and $f$ satisfy \eqref{a1} and \eqref{f1} respectively, and that $g$ is a continuous functional on $C_{0}^2(\overline{\Omega})$ satisfying $g(0)=0$ and either \eqref{g1}, \eqref{g2}, or the following condition:

\begin{enumerate}
\item[(\mylabel{g3}{$g_3$})]  $g(u_n)\to\infty$ as $|\Delta u_n(x)|\to\infty$ in a positive measure subset of $\Omega$. 
\end{enumerate}

\medskip

Under these assumptions, Lemmas \ref{lem:unified-minimizer} and \ref{l1} remain valid. Moreover, the following result also holds:

\begin{lem}\strut
\begin{enumerate}
\item[$(i)$] If $s_n\to s\in (\lambda_1/\beta, \lambda_1/\theta)$ then $w_{s_n}\to w_{s}$ in $C^2(\overline{\Omega})$.
\item[$(ii)$] $w_s\to 0$ in $C^{2}(\overline{\Omega})$ as $s\to(\lambda_1/\beta)^+$. Moreover, if $\theta>0$ then $\displaystyle \lim_{s\to(\lambda_1/\theta)^-}|\Delta w_s(x)|=+\infty$, for all $x\in\Omega$.
\end{enumerate}
\end{lem}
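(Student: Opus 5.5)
The plan is to upgrade the $C^1$ statements of Lemma \ref{l1} to $C^2$ by means of Schauder estimates, using that $f$ is now locally Lipschitz.

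For $(i)$, let $s_n\to s\in(\lambda_1/\beta,\lambda_1/\theta)$ and write $w_n:=w_{s_n}$.
\begin{itemize}
\item By Lemma \ref{l1}(i),(ii) the family $(w_n)$ is bounded in $C^{1,\alpha}(\overline{\Omega})$, since it is monotone and dominated by $w_{s^*}$ for some fixed $s^*\in(s,\lambda_1/\theta)$. Moreover $w_n\to w_s$ in $C^1(\overline\Omega)$.
\item Let $R$ be a common $L^\infty$ bound and $L_R$ the Lipschitz constant of $f$ on $[0,R]$. Then
\[
|f(w_n(x))-f(w_n(y))|\le L_R\|\nabla w_n\|_\infty|x-y|,
\]
so $s_nf(w_n)$ is bounded in $C^{0,\alpha}(\overline\Omega)$.
\item Schauder estimates then give a uniform bound for $w_n$ in $C^{2,\alpha}(\overline\Omega)$.
\item Arzelà–Ascoli yields convergence in $C^2(\overline\Omega)$ along a subsequence. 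The limit must be $w_s$ by the $C^1$ convergence, so the whole sequence converges.
\end{itemize}
Alternatively, one can apply the Schauder estimate directly to $w_n-w_s$:
\[
\|w_n-w_s\|_{C^{2,\alpha}}\le C\,\|s_nf(w_n)-sf(w_s)\|_{C^{0,\alpha}}.
\]
The right-hand side tends to zero, because $f$ is Lipschitz on $[0,R]$ and $w_n\to w_s$ in $C^1$, so the needed Hölder seminorm is controlled by interpolation. The subsequence argument already suffices, however.

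The first claim of $(ii)$ follows the same way.
\begin{itemize}
\item As $s\to(\lambda_1/\beta)^+$, Lemma \ref{l1}(iii) gives $w_s\to0$ in $C^1(\overline\Omega)$, with $\|w_s\|_{C^1}\le 1$ say.
\item Since $f(0)=0$ and $f$ is Lipschitz on $[0,1]$, we have $\|f(w_s)\|_\infty\le L_1\|w_s\|_\infty\to0$. The Hölder seminorm satisfies
\[
[f(w_s)]_{\alpha}\le L_1\,[w_s]_\alpha\le L_1 C\|w_s\|_{C^1}\to0.
\]
\item Schauder then gives $\|w_s\|_{C^{2,\alpha}}\le C s\|f(w_s)\|_{C^{0,\alpha}}\to0$, where $s$ stays bounded near $\lambda_1/\beta$. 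This is the first claim.
\end{itemize}

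For the second claim of $(ii)$, assume $\theta>0$. The point is that $-\Delta w_s=sf(w_s)$ holds pointwise, since $w_s\in C^2$. Hence for each $x\in\Omega$,
\[
|\Delta w_s(x)|=s\,f(w_s(x))\ge s\,\theta\, w_s(x),
\]
using $f(t)/t>\theta$ from \eqref{f1}, because $\psi$ is strictly decreasing with limit $\theta$. By Lemma \ref{l1}(iii), $w_s(x)\to\infty$ for every $x\in\Omega$ as $s\to(\lambda_1/\theta)^-$, and $s\to\lambda_1/\theta>0$. Therefore $|\Delta w_s(x)|\to\infty$. This is also where $\theta>0$ is essential: for $\theta=0$ one would need the growth information \eqref{f2} instead.

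The only point needing care is obtaining the uniform Hölder bound on $f(w_s)$ from the local Lipschitz property together with the $C^1$ bounds; the rest is routine.
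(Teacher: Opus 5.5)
Your proposal is correct and follows the paper's argument. Uniform $C^{1,\alpha}$ bounds plus local Lipschitz continuity of $f$ give $C^{0,\alpha}$ bounds on $s f(w_s)$; Schauder estimates and the compact embedding $C^{2,\alpha}(\overline{\Omega})\hookrightarrow C^2(\overline{\Omega})$ then yield $(i)$ and the first part of $(ii)$, and the pointwise identity $|\Delta w_s(x)|=s f(w_s(x))\ge s\theta w_s(x)$ together with Lemma \ref{l1}(iii) yields the second part. Two details are handled more carefully than in the paper: you state explicitly the uniform $C^{2,\alpha}$ bound that the compactness step needs, and you obtain $w_s\to 0$ in $(ii)$ by a direct Schauder estimate rather than by passing to a subsequence and identifying the limit.
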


\begin{proof}
$(i)$ It is a straightforward consequence of the proof of Lemma \ref{l1}(ii), that $w_n:=w_{s_n}\to w_s$ in $C^1(\overline{\Omega})$ and $w_n\in C^{1, \alpha}(\overline{\Omega})$, for some $\alpha\in (0, 1)$. Now, the local Lipschitz-continuity of $f$ implies $f(w_n)\in C^{0, \alpha}(\overline{\Omega})$ and, by elliptic regularity theory, we have $w_n\in C^{2, \alpha}(\overline{\Omega})$. The result follows by compact embedding from $C^{2, \alpha}(\overline{\Omega})$ into $C^{2}(\overline{\Omega})$ and uniqueness of the limit. 
$(ii)$ To prove the first part, it is enough to argue as in the proof of item $(i)$, to conclude that $w_n\to w$ in $C^2(\overline{\Omega})$, for some $w\in C^2(\overline{\Omega})$. Now, the result follows from Lemma \ref{l1}(iii). The second part is consequence of the equality
$$
|\Delta w_n(x)|=s_n\frac{f(w_n(x))}{w_n(x)}w_n(x) \ \mbox{in} \ \Omega,
$$
combined with \eqref{f1} and Lemma \ref{l1}(iii).

\end{proof}

\begin{exa}
We present below a variety of examples illustrating different growth regimes for second order nonlocal terms, where $\|u\|$ denotes any of the norms $\|u\|_{L^p(\Omega)}$, $\|u\|_{L^\infty(\Omega)}$, $\|\nabla u\|_{L^p(\Omega)}$, $\|\nabla u\|_{L^\infty(\Omega)}$, $\|\Delta u\|_{L^p(\Omega)}$ or $\|\Delta u\|_{L^\infty(\Omega)}$ and $p\geq 1$:

\begin{enumerate}
    \item[$(i)$] $g(u) = \int_\Omega |\Delta u(x)|^q \bigl(1 + \cos^2(|\Delta u(x)|)\bigr)\,dx, \quad q>0$;

    \item[$(ii)$] $g(u)=\frac{\|u\|^q}{\left(1 + \|u\|^q\right)^{1/q}}, \quad q>1$;

    \item[$(iii)$] $g(u) = \int_\Omega \bigl(|\Delta u(x)|^q + |\Delta u(x)|^r\bigr)\,dx, \quad 0<q<r$;

    \item[$(iv)$] $g(u) = \int_\Omega |\Delta u(x)|^q \log\bigl(1 + |\Delta u(x)|\bigr)\,dx, \quad q>0$;
    
    \item[$(v)$] $g(u) = \int_\Omega \frac{|\Delta u(x)|^2}{\sqrt{1 + |\Delta u(x)|^2}}\,dx$.
\end{enumerate}
\end{exa}

\section*{Data Availability Statement}

The authors declare that no data sets were generated or analyzed during the current study.

 \end{document}